\crefname{equation}{}{}
\crefname{lem}{Lemma}{Lemmas}
\crefname{thm}{Theorem}{Theorems}
\DeclareMathOperator{\D}{D}
\newcommand{\dual}[1]{\langle {#1} \rangle}
\newcommand{\Dual}[1]{\left\langle {#1} \right\rangle}
\newcommand{\jmp}[1]{{[\![ {#1} ]\!]}}
\newcommand{\nm}[1]{\lVert {#1} \rVert}
\newcommand{\snm}[1]{\lvert {#1} \rvert}
\newcommand{\Snm}[1]{\left\lvert {#1} \right\rvert}
\newcommand{\ssnm}[1]
{
  \left\vert\kern-0.25ex
  \left\vert\kern-0.25ex
  \left\vert
  {#1}
  \right\vert\kern-0.25ex
  \right\vert\kern-0.25ex
  \right\vert
}
\def\spher@harm#1{%
  \vbox{\hbox{%
    \offinterlineskip
    \valign{&\hb@xt@2\p@{\hss$##$\hss}\vskip.2ex\cr#1\crcr}%
  }\vskip-.36ex}%
}
\def\gshone{\spher@harm{.}}
\def\gshtwo{\spher@harm{.&.}}
\def\gshthree{\spher@harm{.&.&.}}
\let\gsh\spher@harm
\newtheorem{Def}{Definition}[section]
\newtheorem{lem}{Lemma}[section]
\newtheorem{rem}{Remark}[section]
\newtheorem{thm}{Theorem}[section]
\def\@captype{table}\makeatother
\begin{document}

\title{
	\Large \bf Analysis of a time-stepping discontinuous Galerkin
	method for fractional diffusion-wave equation
	with nonsmooth data
	\thanks
	{
		This work was supported in part
		by National Natural Science Foundation
		of China (11771312).
	}
}
\author{
	Binjie Li \thanks{Email: libinjie@scu.edu.cn},
	Tao Wang \thanks{Corresponding author. Email: wangtao5233@hotmail.com},	Xiaoping Xie \thanks{Email: xpxie@scu.edu.cn} \\
	{School of Mathematics, Sichuan University, Chengdu 610064, China}
}

\date{}
\maketitle

\begin{abstract}
	This paper analyzes a time-stepping discontinuous Galerkin method for fractional
	diffusion-wave problems. This method uses piecewise constant functions in the
	temporal discretization and continuous piecewise linear functions in the spatial
	discretization. Nearly optimal convergence rate with respect to the regularity of
	the solution is established when the source term is nonsmooth, and nearly optimal
	convergence rate $ \ln(1/\tau)(\sqrt{\ln(1/h)}h^2+\tau) $ is derived under
	appropriate regularity assumption on the source term. Convergence is also
	established without smoothness assumption on the initial value. Finally, numerical
	experiments are performed to verify the theoretical results.
\end{abstract}

\medskip\noindent{\bf Keywords:}
fractional diffusion-wave problem,
discontinuous Galerkin method, discrete Laplace transform,
convergence,
nonsmooth data.

\section{Introduction}
This paper considers the following time fractional diffusion-wave problem:
\begin{equation}
	\label{eq:model}
	\left\{
		\begin{aligned}
			u' - \Delta \D_{0+}^{-\alpha} u & = f   &  & \text{in $~~ \Omega\times(0,T)$,}          \\
			u                               & = 0   &  & \text{on $ \partial\Omega \times (0,T) $,} \\
			u(0)                            & = u_0 &  & \text{in $~~ \Omega $,}
		\end{aligned}
	\right.
\end{equation}
where $ 0<\alpha<1 $, $ 0 < T < \infty $, $ \Omega \subset \mathbb R^d $ ($d=1,2,3$)
is a convex $ d $-polytope, $ \D_{0+}^{-\alpha} $ is a Riemann-Liouville fractional
integral operator of order $ \alpha $, and $ f $ and $ u_0 $ are two given functions.
The above fractional diffusion-wave equation also belongs to the class of evolution
equations with a positive-type memory term (or integro-differential equations with a
weakly singular convolution kernel), which have attracted many works in the past
thirty years.



Let us first briefly summarize some works devoted to the numerical treatments of
problem \cref{eq:model}. McLean and Thom\'ee~\cite{McLean1993} proposed and analyzed
two discretizations: the first uses the backward Euler method to approximate the
first-order time derivative and a first-order integration rule to approximate the
fractional integral; the second uses a second-order backward difference scheme to
approximate the first-order time derivative and a second-order integration rule to
approximate the fractional integral. Then McLean et al.~\cite{McLean1996} analyzed
two discretizations with variable time steps: the first is a simple variant of the
first one analyzed in \cite{McLean1993}; the second combined the Crank-Nicolson
scheme and two integral rules to approximate the fractional integral (but the
temporal accuracy is not better than $ \mathcal O(\tau^{1+\alpha}) $). Combining the
first-order and second-order backward difference schemes and the convolution
quadrature rules \cite{Lubich1986}, Lubich et al.\cite{Lubich1996} proposed and
analyzed two discretizations for problem \cref{eq:model}, where optimal order error
bounds were derived for positive times without spatial regularity assumption on the
data. Cuesta et al.~\cite{Cuesta2006} proposed and studied a second-order
discretization for problem \cref{eq:model} and its semilinear version.

Representing the solution as a contour integral by the Laplace transform techinque
and approximating this contour integral, McLean and Thom\'ee
\cite{McLean2010-B,McLean2010} developed and analyzed three numerical methods for
problem \cref{eq:model}. These methods use $ 2N+1 $ quadrature points, and the first
method possesses temporal accuracies $ \mathcal O(e^{-cN}) $ away from $ t=0 $, the
second and third have temporal accuracy $ \mathcal O(e^{-c\sqrt N}) $.


McLean and Mustapha \cite{McLean2007} studied a generalized Crank-Nicolson scheme
for problem \cref{eq:model}, and they obtained accuracy order $ \mathcal O(h^2 +
\tau^2) $ on appropriately graded temporal grids under the condition that the
solution and the forcing term satisfy some growth estimates. Mustapha and McLean
\cite{Mustapha2009Discontinuous} applied the famous time-stepping discontinuous
Galerkin (DG) method \cite[Chapter 12]{Thomee2006} to an evolution equation with a
memory term of positive type. For the low-order DG method, they derived the accuracy
order $ \mathcal O(\ln(1/\tau)h^2 + \tau) $ on appropriately graded temporal grids
under the condition that the time derivatives of the solution satisfy some growth
estimates. We notice that this low-order DG method is identical to the first-order
discretization analyzed in the aforementioned work \cite{McLean1996}. They also
analyzed an $hp$-version of the DG method in \cite{mustapha2014well-posedness}. So
far, by our knowledge the convergence of this algorithm has not been established with
nonsmooth data.




This paper analyzed the convergence of the aforementioned low-order DG method, which
is a further development of the works in \cite{McLean1996,Mustapha2009Discontinuous}.
For $ f = 0 $, we derive the error estimate
\[
	\nm{u(t_j) - U_j}_{L^2(\Omega)} \leqslant
	C(h^2 t_j^{-\alpha-1} + \tau t_j^{-1}) \nm{u_0}_{L^2(\Omega)}.
\]
For $ u_0 = 0 $, we obtain the following error estimates:
\begin{align*}
	\nm{u-U}_{L^\infty(0,T;L^2(\Omega))} & \leqslant
	C \big(h + \sqrt{\ln(1/h)}\,\tau^{1/2}\big)
	\nm{f}_{L^2(0,T;\dot H^{\alpha/(\alpha+1)}(\Omega))}, \\
	\nm{u-U}_{L^\infty(0,T;L^2(\Omega))} & \leqslant
	C \ln(T/\tau) \big(\sqrt{\ln(1/h)}h^2 + \tau\big)
	\nm{f}_{{}_0H^{\alpha+1/2}(0,T;L^2(\Omega))},
\end{align*}
where the first is nearly optimal with respect to the regularity of the solution and
the second is nearly optimal, and we notice that since $ \alpha/(\alpha+1) < 1/2 $
the first estimate imposes no boundary condition on $ f $. In addition, to
investigate the effect of the nonvanishing $ f(0) $ on the accuracy of the numerical
solution, we establish the error estimate
\[
	\nm{u(t_j) - U_j}_{L^2(\Omega)} \leqslant
	C (t_j^{-\alpha} h^2 + \tau) \nm{v}_{L^2(\Omega)},
\]
in the case that $ u_0 = 0 $ and $ f(t) = v \in L^2(\Omega) $, $ 0 \leqslant t
\leqslant T $.





The rest of this paper is organized as follows. \cref{sec:pre} introduces some
Sobolev spaces, the fractional calculus operator, a time-stepping discontinuous
Galerkin method, the weak solution of problem \cref{eq:model} and its regularity.
\cref{sec:disc_regu} investigates two discretizations of two fractional ordinary
equations, respectively. \cref{sec:main} establishes the convergence of the
numerical method. \cref{sec:numer} performs four numerical experiments to confirm the
theoretical results. Finally, \cref{sec:conclusion} provides some concluding remarks.

\section{Preliminaries}
\label{sec:pre}
\subsection{Sobolev spaces}
Assume that $ -\infty < a < b < \infty $. For each $ m \in \mathbb N $, define
\begin{align*}
	{}_0H^m(a,b) & := \{v\in H^m(a,b): v^{(k)}(a)=0,\,\, 0\leqslant k<m\}, \\
	{}^0H^m(a,b) & := \{v\in H^m(a,b): v^{(k)}(b)=0,\,\, 0\leqslant k<m\},
\end{align*}
where $ H^m(a,b) $ is a usual Sobolev space \cite{Tartar2007} and $ v^{(k)} $ is the
$ k $-th weak derivative of $ v $. We equip the above two spaces with the norms
\begin{align*}
	\nm{v}_{{}^0H^m(a,b)} &:= \nm{v^{(m)}}_{L^2(a,b)}
	\quad \forall v \in {}^0H^m(a,b), \\
	\nm{v}_{{}_0H^m(a,b)} &:= \nm{v^{(m)}}_{L^2(a,b)}
	\quad \forall v \in {}_0H^m(a,b),
\end{align*}
respectively. For any $ m \in \mathbb N_{>0} $ and $ 0 < \theta < 1 $, define
\begin{align*}
	{}_0H^{m-\theta}(a,b) & := [ {}_0H^{m-1}(a,b),\ {}_0H^m(a,b) ]_{1-\theta,2}, \\
	{}^0H^{m-\theta}(a,b) & := [ {}^0H^{m-1}(a,b),\ {}^0H^m(a,b) ]_{1-\theta,2},
\end{align*}
where $ [\cdot, \cdot]_{\theta,2} $ means the famous $ K $-method \cite[Chapter
22]{Tartar2007}. For $ 0 < \gamma < \infty $, we use $ {}^0H^{-\gamma}(a,b) $ and $
{}_0H^{-\gamma}(a,b) $ to denote the dual spaces of $ {}_0H^\gamma(a,b) $ and $
{}^0H^\gamma(a,b) $, respectively. Conversely, since $ {}_0H^\gamma(a,b) $ and $
{}^0H^\gamma(a,b) $ are reflexive, they are the dual spaces of $ {}^0H^{-\gamma}(a,b)
$ and $ {}_0H^{-\gamma}(a,b) $, respectively. Moreover, for any $ 0 < \gamma < 1/2 $,
$ {}_0H^\gamma(a,b) = {}^0H^\gamma(a,b) = H^\gamma(a,b) $ with equivalent norms
(cf.~\cite[Chapter 1]{Lions1972}), and hence $ {}_0H^{-\gamma}(a,b) =
{}^0H^{-\gamma}(a,b) $ with equivalent norms.

It is well known that there exists an orthonormal basis $\{\phi_n: n \in \mathbb N
\}$ of $ L^2(\Omega) $ such that
\[
	\left\{
		\begin{aligned}
			-\Delta \phi_n ={} &\lambda_n \phi_n&&\,
			{\rm~in~}~\,\,\Omega,\\
			\phi_n={}&0&&{\rm~on~}\partial\Omega,
		\end{aligned}
	\right.
\]
where $ \{ \lambda_n: n \in \mathbb N \} $ is a positive non-decreasing sequence and
$\lambda_n\to\infty$ as $n\to\infty$. For any $ -\infty< \beta < \infty $, define
\begin{center}
	$
	\dot H^\beta(\Omega) := \Big\{
		\sum_{n=0}^\infty v_n \phi_n:\
		\sum_{n=0}^\infty \lambda_n^\beta v_n^2 < \infty
	\Big\}
	$,
\end{center}
and endow this space with the norm
\begin{center}
	$
	\big\|\sum_{n=0}^\infty v_n \phi_n  \big\|_{\dot H^\beta(\Omega)}
	:= \Big(
		\sum_{n=0}^\infty \lambda_n^\beta v_n^2
	\Big)^{1/2}
	$.
\end{center}
For any $ \beta,\gamma \in \mathbb R $, define
\[
	{}^0H^\gamma(a,b;\dot H^\beta(\Omega)) := \bigg\{
		\sum_{n=0}^\infty c_n \phi_n:\
		\sum_{n=0}^\infty \lambda_n^\beta \nm{c_n}_{{}^0H^\gamma(a,b)}^2 < \infty
	\bigg\},
\]
and equip this space with the norm
\[
	\Big\| \sum_{n=0}^\infty c_n \phi_n \Big\|_{
		{}^0H^\gamma(a,b;\dot H^\beta(\Omega))
	} :=
	\bigg(
		\sum_{n = 0}^\infty \lambda_n^\beta
		\nm{c_n}_{{}^0H^\gamma(a,b)}^2
	\bigg)^{1/2}.
\]
The space $ {}_0H^\gamma(a,b;\dot H^\beta(\Omega)) $ is analogously defined, and it
is evident that $ {}^0H^{-\gamma}(a,b;\dot H^{-\beta}(\Omega)) $ is the dual space of
$ {}_0H^\gamma(a,b;\dot H^\beta(\Omega)) $ in the sense that
\[
	\Dual{
		\sum_{n=0}^\infty c_n \phi_n, \sum_{n=0}^\infty d_n \phi_n
	}_{{}_0H^\gamma(a,b;\dot H^\beta(\Omega))} :=
	\sum_{n=0}^\infty \dual{c_n, d_n}_{{}_0H^\gamma(a,b)}
\]
for all $ \sum_{n=0}^\infty c_n \phi_n \in {}^0H^\gamma(a,b;\dot H^{-\beta}(\Omega))
$ and $ \sum_{n=0}^\infty d_n \phi_n \in {}_0H^\gamma(a,b;\dot H^\beta(\Omega)) $.
Since $ {}_0H^\gamma(a,b;\dot H^\beta(\Omega)) $ is reflexive, it is the dual space
of $ {}^0H^{-\gamma}(a,b;\dot H^{-\beta}(\Omega)) $. Above and throughout, for any
Banach space $ W $, the notation $ \dual{\cdot,\cdot}_W $ means the duality paring
between $ W^* $ (the dual space of $ W $) and W.

\subsection{Fractional calculus operators}
This section introduces fractional calculus operators on a domain $ (a,b) $, $
-\infty < a < b < \infty $, and summarizes several properties of these operators used
in the this paper. Assume that $ X $ is a separable Hilbert space.
\begin{Def}
	\label{def:frac_calc}
	For $ -\infty < \gamma < 0 $, define
	\begin{align*}
		\left(\D_{a+}^\gamma v\right)(t) &:=
		\frac1{ \Gamma(-\gamma) }
		\int_a^t (t-s)^{-\gamma-1} v(s) \, \mathrm{d}s, \quad t\in(a,b), \\
		\left(\D_{b-}^\gamma v\right)(t) &:=
		\frac1{ \Gamma(-\gamma) }
		\int_t^b (s-t)^{-\gamma-1} v(s) \, \mathrm{d}s, \quad t\in(a,b),
	\end{align*}
	for all $ v \in L^1(a,b;X) $, where $ \Gamma(\cdot) $ is the gamma function. In
	addition, let $ \D_{a+}^0 $ and $ \D_{b-}^0 $ be the identity operator on $
	L^1(a,b;X) $. For $ j - 1 < \gamma \leqslant j $ with $ j \in \mathbb N_{>0} $,
	define
	\begin{align*}
		\D_{a+}^\gamma v & := \D^j \D_{a+}^{\gamma-j}v, \\
		\D_{b-}^\gamma v & := (-\D)^j \D_{b-}^{\gamma-j}v,
	\end{align*}
	for all $ v \in L^1(a,b;X) $, where $ \D $ is the first-order differential operator
	in the distribution sense.
\end{Def}

Let $ \{e_n:n \in \mathbb N\} $ be an orthonormal basis of $ X $. For any $ \beta \in
\mathbb R $, define
\begin{small}
\begin{align*}
	{}^0H^\beta(a,b;X) := \bigg\{
		\sum_{n=0}^\infty c_n e_n:\
		\sum_{n=0}^\infty \nm{c_n}_{{}^0H^\beta(a,b)} < \infty
	\bigg\}
\end{align*}
\end{small}
and endow this space with the norm
\begin{small}
\[
	\Big\|\sum_{n=0}^\infty c_n e_n\Big\|_{{}^0H^\beta(a,b;X)} :=
	\bigg( \sum_{n=0}^\infty \nm{c_n}_{{}^0H^\beta(a,b)}^2 \bigg)^{1/2}.
\]
The space $ {}_0H^\beta(a,b;X) $ is analogously defined. It is standard that $
{}_0H^{-\beta}(a,b;X) $ is the dual space of $ {}^0H^\beta(a,b;X) $ in the sense that
\[
	\Dual{
		\sum_{n=0}^\infty c_n e_n,
		\sum_{n=0}^\infty d_n e_n
	}_{{}^0H^\beta(a,b;X)} :=
	\sum_{n=0}^\infty \dual{c_n, d_n}_{{}^0H^\beta(a,b)}
\]
\end{small}
for all $ \sum_{n=0}^\infty c_n e_n \in {}_0H^{-\beta}(a,b;X) $ and $
\sum_{n=0}^\infty d_n e_n \in {}^0H^\beta(a,b;X) $.
\begin{rem}
	\label{rem:equiv_frac_space}
	For any $ 0 < \beta < 1 $, a simple calculation gives that $ {}_0H^\beta(a,b;X) $
	is identical to $ [L^2(a,b;X), {}_0H^1(a,b;X)]_{\beta,2} $, and
	\[
		\nm{v}_{{}_0H^\beta(a,b;X)} \leqslant
		\sqrt2\,\nm{v}_{[L^2(a,b;X), {}_0H^1(a,b;X)]_{\beta,2}} \leqslant
		2\nm{v}_{{}_0H^\beta(a,b;X)}
	\]
	for all $ v \in {}_0H^\beta(a,b;X) $.
\end{rem}
\begin{lem}
	\label{lem:regu-basic}
	If $ 0 \leqslant \beta < \infty $ and $ -\infty < \gamma \leqslant \beta $, then
	\begin{small}
	\begin{align*}
		C_1 \nm{v}_{{}_0H^\beta(a,b;X)} \leqslant
		\nm{\D_{a+}^\gamma v}_{{}_0H^{\beta-\gamma}(a,b;X )} \leqslant
		C_2 \nm{v}_{{}_0H^\beta(a,b;X)}
		\,\forall v \in {}_0H^\beta(a,b;X), \\
		C_1 \nm{v}_{{}^0H^\beta(a,b;X)} \leqslant
		\nm{\D_{b-}^\gamma v}_{{}^0H^{\beta-\gamma}(a,b;X )} \leqslant
		C_2 \nm{v}_{{}^0H^\beta(a,b;X)}
		\,\forall v \in {}^0H^\beta(a,b;X),
	\end{align*}
	\end{small}
	where $ C_1 $ and $ C_2 $ are two positive constants depending only on $ \beta $
	and $ \gamma $.
\end{lem}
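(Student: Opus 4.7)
The plan is to prove the bound for the left-sided operator $\D_{a+}^\gamma$; the right-sided bound then follows by the reflection $t \mapsto a+b-t$, which maps ${}_0H^\beta(a,b;X)$ onto ${}^0H^\beta(a,b;X)$ isometrically and interchanges $\D_{a+}^\gamma$ with $\D_{b-}^\gamma$ (as is immediate from \cref{def:frac_calc} after a change of variables). My first reduction is to the scalar case $X=\mathbb{R}$: writing $v=\sum_n c_n e_n$, \cref{def:frac_calc} shows that $\D_{a+}^\gamma$ acts componentwise, and both $\nm{v}_{{}_0H^\beta(a,b;X)}^2$ and $\nm{\D_{a+}^\gamma v}_{{}_0H^{\beta-\gamma}(a,b;X)}^2$ decompose as $\ell^2(n)$-sums of scalar analogues, so the $X$-valued inequalities follow termwise from the scalar ones with identical constants.

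Second, I would handle integer $\beta$ and integer $\gamma$ by direct computation. The core observation is that $\D_{a+}^{-1}v(t)=\int_a^t v(s)\,ds$ is a topological isomorphism $L^2(a,b)\to{}_0H^1(a,b)$ with inverse $\D$, since $\nm{\D_{a+}^{-1}v}_{{}_0H^1}=\nm{v}_{L^2}$ by construction. Iterating, $\D_{a+}^{-k}$ is an iso ${}_0H^\beta(a,b)\to{}_0H^{\beta+k}(a,b)$ for all nonnegative integers $\beta,k$. For $0\leq\gamma\leq\beta$ integer, the vanishing conditions $v^{(i)}(a)=0$ for $0\leq i<\beta$ valid on ${}_0H^\beta$ yield $\D_{a+}^\gamma v=\D^\gamma v\in{}_0H^{\beta-\gamma}$, giving an iso whose inverse is $\D_{a+}^{-\gamma}$. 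Non-integer $\beta$ (with integer $\gamma$) is then obtained via the defining $K$-interpolation ${}_0H^{m-\theta}=[{}_0H^{m-1},{}_0H^m]_{1-\theta,2}$ combined with the standard interpolation of isomorphisms between endpoint pairs.

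The main obstacle is extending to non-integer $\gamma$, where the operator $\D_{a+}^\gamma$ is a specific convolution rather than an abstractly interpolated map. Two routes appear available. The first is to extend $v$ by zero to the half-line $(a,\infty)$; then $\D_{a+}^\gamma$ has Laplace symbol $s^\gamma$, and a Plancherel-type characterization of ${}_0H^\beta$ via weighted $L^2$-norms of the Laplace transform on a vertical line reduces the inequality to the trivial identity $s^{\beta-\gamma}\cdot s^\gamma=s^\beta$. The second uses the reduction $\D_{a+}^\gamma=\D\circ\D_{a+}^{\gamma-1}$ to confine $\gamma$ to $(-1,0]$, then interpolates the endpoints $\D_{a+}^0=\mathrm{Id}:L^2\to L^2$ and $\D_{a+}^{-1}:L^2\to{}_0H^1$; the subtlety is that interpolation produces \emph{some} bounded intermediate operator, which must then be identified with the convolution of \cref{def:frac_calc} on a dense subspace of smooth compactly supported test functions via the semigroup identity $\D_{a+}^{\gamma_1}\D_{a+}^{\gamma_2}=\D_{a+}^{\gamma_1+\gamma_2}$. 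Verifying this coincidence, and extending by density and continuity to all of ${}_0H^\beta$, is the technical crux of the argument.
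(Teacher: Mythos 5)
First, a caveat: the paper does not actually prove \cref{lem:regu-basic}; it defers the proof to \cite[Section 3]{Luo2018Convergence}, so there is no in-paper argument to compare against. Your overall strategy --- componentwise reduction to the scalar case, reflection for the right-sided operator, an exact computation for integer orders, and transform methods for fractional orders --- is the standard one and is consistent with that reference. The scalar reduction, the reflection, and the integer-$\gamma$ case (including its extension to non-integer $\beta$ by interpolating the single operators $\D$ and $\D_{a+}^{-1}$ between integer endpoint couples) are all sound.

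The gap is in the fractional-$\gamma$ step, which you correctly identify as the crux but do not close. Your route (b) is not a valid argument as stated: the real interpolation theorem for operators bounds a \emph{single} operator acting between two interpolation couples, whereas $\D_{a+}^{0}=\mathrm{Id}$ and $\D_{a+}^{-1}$ are two different operators, so no ``abstract intermediate operator'' is produced at all; the difficulty is not merely one of identifying such an operator with the convolution. Making this idea rigorous requires Stein interpolation for the analytic family $z\mapsto\D_{a+}^{-z}$ (a complex-interpolation statement that must then be reconciled with the $K$-method used to define $ {}_0H^{m-\theta}(a,b) $), which is a substantially different argument from the one you sketch. Route (a) is the right one, but its key ingredient is missing: one must prove that the finite-interval interpolation space $ {}_0H^\beta(a,b)=[{}_0H^{m-1}(a,b),{}_0H^m(a,b)]_{1-\theta,2} $ is, via extension by zero, isomorphic with two-sided norm bounds to a space on the half-line on which the Laplace/Fourier symbol calculus applies. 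That equivalence is precisely the nontrivial content of the lemma --- it is delicate at $\beta\in\mathbb N+1/2$, where extension by zero fails for the usual $H^\beta_0(a,b)$ and one must use that the $K$-interpolation space is the strictly smaller Lions--Magenes space --- and reducing the claim to the identity $s^{\beta-\gamma}\cdot s^\gamma=s^\beta$ presupposes it. Finally, the lower bound $C_1\nm{v}_{{}_0H^\beta(a,b;X)}\leqslant\nm{\D_{a+}^\gamma v}_{{}_0H^{\beta-\gamma}(a,b;X)}$ needs the semigroup identity $\D_{a+}^{-\gamma}\D_{a+}^{\gamma}v=v$ on all of $ {}_0H^\beta(a,b) $, which you invoke only on a dense subspace and whose passage to the limit itself requires the upper bounds. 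In short: the plan is the right one, but its technical heart is asserted rather than proved.
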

\begin{lem}
	\label{lem:coer}
	If $ -1/2 < \gamma < 1/2 $, then
	\begin{align*}
		\cos(\gamma\pi) \nm{\D_{a+}^\gamma v}_{L^2(a,b;X)}^2 \leqslant
		(\D_{a+}^\gamma v, \D_{b-}^\gamma v)_{L^2(a,b;X)} \leqslant
		\sec(\gamma\pi) \nm{\D_{a+}^\gamma v}_{L^2(a,b;X)}^2, \\
		\cos(\gamma\pi) \nm{\D_{b-}^\gamma v}_{L^2(a,b;X)}^2 \leqslant
		(\D_{a+}^\gamma v, \D_{b-}^\gamma v)_{L^2(a,b;X)} \leqslant
		\sec(\gamma\pi) \nm{\D_{b-}^\gamma v}_{L^2(a,b;X)}^2,
	\end{align*}
	for all $ v \in {}_0H^\gamma(a,b;X) $ (equivalent to $ {}^0H^\gamma(a,b;X) $),
	where $ (\cdot,\cdot)_{L^2(a,b;X)} $ is the usual inner product in $ L^2(a,b;X) $.
\end{lem}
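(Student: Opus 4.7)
The plan is to establish both sets of inequalities through a Plancherel computation on the whole real line after extending $v$ by zero outside $(a,b)$. First, I would reduce to the scalar-valued case: because $\D_{a+}^\gamma$ and $\D_{b-}^\gamma$ act only in the time variable and the $L^2(a,b;X)$ inner product decomposes as a sum of coordinate inner products with respect to any orthonormal basis of $X$, the four inequalities follow from the corresponding ones with $X = \mathbb{R}$.

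Write $\tilde v$ for the zero-extension of the scalar $v$ to $\mathbb{R}$. Since $|\gamma|<1/2$ gives ${}_0H^\gamma(a,b) = {}^0H^\gamma(a,b) = H^\gamma(a,b)$, the extension lies in $H^\gamma(\mathbb{R})$. I would then recognize $\D_{a+}^\gamma v$ and $\D_{b-}^\gamma v$ on $(a,b)$ as the restrictions of the convolutions $K_\pm * \tilde v$ with the causal/anti-causal power kernels $K_\pm(t) = t_\pm^{-\gamma-1}/\Gamma(-\gamma)$, interpreted distributionally for $\gamma \geqslant 0$, whose Fourier symbols (in the principal branch) are $(\pm i\xi)^\gamma = |\xi|^\gamma e^{\pm i\gamma\pi\,\mathrm{sgn}(\xi)/2}$. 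The key observation is that $K_+ * \tilde v$ is supported in $[a,\infty)$ while $K_- * \tilde v$ is supported in $(-\infty,b]$, so their pointwise product is supported in $[a,b]$; hence Parseval's identity on $\mathbb{R}$, together with the reality of the left-hand side, should yield
\begin{equation*}
(\D_{a+}^\gamma v, \D_{b-}^\gamma v)_{L^2(a,b)} = \cos(\gamma\pi)\,\nm{K_+ * \tilde v}_{L^2(\mathbb{R})}^2 = \cos(\gamma\pi)\,\nm{K_- * \tilde v}_{L^2(\mathbb{R})}^2.
\end{equation*}

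From this identity the lower bounds are immediate, since $\nm{\D_{a+}^\gamma v}_{L^2(a,b)}^2 \leqslant \nm{K_+ * \tilde v}_{L^2(\mathbb{R})}^2$ and analogously for $\D_{b-}^\gamma$. For the upper bound involving $\nm{\D_{a+}^\gamma v}_{L^2(a,b)}^2$, I would set $A := \nm{\D_{a+}^\gamma v}_{L^2(a,b)}$, $B := \nm{\D_{b-}^\gamma v}_{L^2(a,b)}$ and $C := (\D_{a+}^\gamma v, \D_{b-}^\gamma v)_{L^2(a,b)}$, and combine the already-proved lower bound $\cos(\gamma\pi)\, B^2 \leqslant C$ with the Cauchy--Schwarz estimate $C \leqslant AB$ to obtain $B \leqslant \sec(\gamma\pi)\, A$, which in turn gives $C \leqslant AB \leqslant \sec(\gamma\pi)\, A^2$; swapping the roles of $A$ and $B$ delivers the remaining upper bound.

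The hard part will be justifying the Fourier-symbol identification when $0 < \gamma < 1/2$, where $\D_{a+}^\gamma = \D \circ \D_{a+}^{\gamma-1}$ involves a distributional derivative: I would exploit that $1-\gamma > 1/2$ forces $\D_{a+}^{\gamma-1}\tilde v$ to be continuous on $\mathbb{R}$ and to vanish on $(-\infty,a)$, so that its distributional derivative introduces no boundary delta mass and the identity $\widehat{\D F} = i\xi\,\hat F$ applied with $F = \D_{a+}^{\gamma-1}\tilde v$ composes with the symbol of the fractional integral to produce the claimed multiplier $(i\xi)^\gamma \hat{\tilde v}$; the symmetric argument handles $\D_{b-}^\gamma$.
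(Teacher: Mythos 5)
The paper does not prove \cref{lem:coer} in the text; it defers to \cite[Section 3]{Luo2018Convergence}, where the argument is exactly the one you outline: extend by zero, identify $\D_{a+}^\gamma v$ and $\D_{b-}^\gamma v$ with the causal and anti-causal convolutions whose Fourier symbols are $(\pm i\xi)^\gamma$, use the support observation plus Plancherel to get $(\D_{a+}^\gamma v,\D_{b-}^\gamma v)_{L^2(a,b)}=\cos(\gamma\pi)\int_{\mathbb R}\snm{\xi}^{2\gamma}\snm{\hat{\tilde v}}^2$, and then derive the four inequalities from restriction and Cauchy--Schwarz. Your proposal is correct and follows essentially that same route, including the right point of care (continuity and vanishing of $\D_{a+}^{\gamma-1}\tilde v$ at $t=a$, so that the distributional derivative introduces no atom and the symbol calculus applies for $0<\gamma<1/2$).
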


By \cref{lem:regu-basic}, we can extend the domain of $ \D_{a+}^\gamma $, $ -\infty <
\gamma < 0 $, as follows. Assume that $ v \in {}_0H^\beta(a,b;X) $ with $ -\infty <
\beta < 0 $. If $ \beta \leqslant \gamma $, then define $ \D_{a+}^\gamma v \in
{}_0H^{\beta-\gamma}(a,b;X) $ by that
\begin{equation}
	\label{eq:extended_frac_int}
	\dual{\D_{a+}^\gamma v, w}_{{}^0H^{\gamma-\beta}(a,b;X)} :=
	\dual{v, \D_{b-}^\gamma w}_{{}^0H^{-\beta}(a,b;X)}
\end{equation}
for all $ w \in {}^0H^{\gamma-\beta}(a,b;X) $. If $ \beta > \gamma $, then define $
\D_{a+}^\gamma v \in {}_0H^{\beta-\gamma}(a,b;X) $ by that $ \D_{a+}^\gamma v =
\D_{a+}^{\gamma-\beta} \D_{a+}^\beta v $. The domain of the operator $ \D_{b-}^\gamma
$ can be extended analogously.

\begin{lem}
	\label{lem:regu}
	If $ -\infty < \beta < \infty $ and $ -\infty < \gamma \leqslant \max\{0,\beta\} $,
	then
	\begin{align*}
		C_1 \nm{v}_{{}_0H^\beta(a,b;X)} \leqslant
		\nm{\D_{a+}^\gamma v}_{{}_0H^{\beta-\gamma}(a,b;X )} \leqslant
		C_2 \nm{v}_{{}_0H^\beta(a,b;X)}
		\,\forall v \in {}_0H^\beta(a,b;X), \\
		C_1 \nm{v}_{{}^0H^\beta(a,b;X)} \leqslant
		\nm{\D_{b-}^\gamma v}_{{}^0H^{\beta-\gamma}(a,b;X )} \leqslant
		C_2 \nm{v}_{{}^0H^\beta(a,b;X)}
		\,\forall v \in {}^0H^\beta(a,b;X),
	\end{align*}
	where $ C_1 $ and $ C_2 $ are two positive constants depending only on $ \beta $
	and $ \gamma $.
\end{lem}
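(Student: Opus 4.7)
The plan is to extend \cref{lem:regu-basic} to negative $\beta$ by using the two-case extension of $\D_{a+}^\gamma$ defined immediately above the statement, reducing the new cases to the already-proven nonnegative case by duality and composition. I would treat $\D_{a+}^\gamma$ in detail; the bound for $\D_{b-}^\gamma$ follows by the symmetric argument that swaps the endpoints. When $\beta \geqslant 0$, the hypothesis $\gamma \leqslant \max\{0,\beta\}$ reduces to $\gamma \leqslant \beta$ and \cref{lem:regu-basic} applies verbatim; when $\beta < 0$ and $\gamma = 0$, the operator is the identity and the bound is trivial. The only substantive work is therefore in the two subcases $\beta \leqslant \gamma < 0$ (the duality branch of the extended definition) and $\gamma < \beta < 0$ (the composition branch).

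For the duality branch, \cref{eq:extended_frac_int} gives
\[
	\nm{\D_{a+}^\gamma v}_{{}_0H^{\beta-\gamma}(a,b;X)}
	= \sup_{0 \neq w \in {}^0H^{\gamma-\beta}(a,b;X)}
	\frac{\snm{\dual{v, \D_{b-}^\gamma w}_{{}^0H^{-\beta}(a,b;X)}}}{\nm{w}_{{}^0H^{\gamma-\beta}(a,b;X)}}.
\]
Since $\gamma-\beta \geqslant 0$ and $\gamma \leqslant \gamma-\beta$ (because $\beta \leqslant 0$), \cref{lem:regu-basic} supplies a two-sided equivalence between $\nm{\D_{b-}^\gamma w}_{{}^0H^{-\beta}}$ and $\nm{w}_{{}^0H^{\gamma-\beta}}$, and Cauchy--Schwarz then yields the upper bound of \cref{lem:regu}. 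The matching lower bound requires $\D_{b-}^\gamma$ to be \emph{onto} ${}^0H^{-\beta}$; I would exhibit $\D_{b-}^{-\gamma}$ as a bounded right inverse, via \cref{lem:regu-basic} with positive order $-\gamma$ and input regularity $-\beta$ (admissible because $\beta \leqslant \gamma$ gives $-\gamma \leqslant -\beta$), together with the composition identity $\D_{b-}^\gamma \D_{b-}^{-\gamma} = \D_{b-}^0 = \mathrm{I}$.

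For the composition branch, the definition $\D_{a+}^\gamma v = \D_{a+}^{\gamma-\beta}\D_{a+}^\beta v$ together with the duality branch applied at $\gamma=\beta$ places $\D_{a+}^\beta v$ in $L^2(a,b;X) = {}_0H^0(a,b;X)$ with an equivalent norm; \cref{lem:regu-basic} applied to $\D_{a+}^{\gamma-\beta}$ on $L^2$, whose order $\gamma-\beta \leqslant 0$ meets the hypothesis, then closes out the two-sided estimate. The main obstacle I foresee is justifying the semigroup identity $\D_{b-}^{\gamma_1}\D_{b-}^{\gamma_2} = \D_{b-}^{\gamma_1+\gamma_2}$ underlying the surjectivity step: it is classical for smooth, compactly supported integrands by a Fubini swap on the defining convolutions, but some care is needed to push it through to the negative-index spaces in play, which I would handle by density together with the extended-definition relations.
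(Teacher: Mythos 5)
Your proposal is correct, but note that the paper itself contains no proof of \cref{lem:regu} to compare against: it defers to \cite[Section 3]{Luo2018Convergence} in the remark immediately following the statement. Your reduction is nonetheless exactly the argument the surrounding definitions are engineered for --- the two branches of your case analysis mirror the two branches of the extended definition of $\D_{a+}^\gamma$ given via \cref{eq:extended_frac_int} and via composition, and each branch closes by an application of \cref{lem:regu-basic} (the upper bound by the duality pairing, the lower bound by exhibiting $\D_{b-}^{-\gamma}$ as a bounded right inverse). The one step you flag, the identity $\D_{b-}^{\gamma}\D_{b-}^{-\gamma}=\mathrm{I}$ on $ {}^0H^{-\beta}(a,b;X) $, is indeed the only technical point; your plan of verifying it on smooth functions vanishing near $b$ and extending by density and the boundedness already supplied by \cref{lem:regu-basic} is the standard and adequate resolution.
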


\begin{lem}
	If $ -\infty < \beta < \gamma < \beta+1/2 $, then
	\begin{equation}
		\dual{\D_{a+}^\gamma v, w}_{{}^0H^{\gamma-\beta}(a,b;X)} =
		\dual{\D_{a+}^\beta v, \D_{b-}^{\gamma-\beta} w}_{(a,b;X)}
	\end{equation}
	for all $ v \in {}_0H^\beta(a,b;X) $ and $ w \in {}^0H^{\gamma-\beta}(a,b;X) $.
\end{lem}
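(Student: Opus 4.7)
The plan is to verify the identity on a dense subspace of smooth, compactly supported functions, where all fractional operators act classically, and then pass to the general case by continuity.

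First I would check that both sides of the claimed identity extend to continuous bilinear forms on $ {}_0H^\beta(a,b;X) \times {}^0H^{\gamma-\beta}(a,b;X) $. For the left-hand side, \cref{lem:regu} gives $ \D_{a+}^\gamma v \in {}_0H^{\beta-\gamma}(a,b;X) $ with norm bounded by $ \nm{v}_{{}_0H^\beta(a,b;X)} $, and by construction $ {}_0H^{\beta-\gamma}(a,b;X) $ is dual to $ {}^0H^{\gamma-\beta}(a,b;X) $. For the right-hand side, \cref{lem:regu} applied with parameters $(\beta,\beta)$ and $(\gamma-\beta,\gamma-\beta)$ respectively yields $ \D_{a+}^\beta v \in L^2(a,b;X) $ and $ \D_{b-}^{\gamma-\beta} w \in L^2(a,b;X) $, so the $ L^2 $-pairing on the right is likewise continuous.

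Since $ C_c^\infty((a,b);X) $ is dense in $ {}_0H^\beta(a,b;X) $ and in $ {}^0H^{\gamma-\beta}(a,b;X) $ (density for negative orders follows from density of $ L^2 $ in the negatively-indexed space combined with density of $ C_c^\infty $ in $ L^2 $), it suffices to verify the identity for $ v, w \in C_c^\infty((a,b);X) $, in which case every fractional operator is computed from its defining integral. Using the semigroup property $ \D_{a+}^\gamma v = \D_{a+}^{\gamma-\beta}\D_{a+}^\beta v $ for smooth data, the claim reduces to the fractional integration-by-parts formula
\[
	\int_a^b (\D_{a+}^{\gamma-\beta} f)(t)\,g(t)\,\mathrm{d}t
	= \int_a^b f(t)\,(\D_{b-}^{\gamma-\beta} g)(t)\,\mathrm{d}t
\]
applied to $ f = \D_{a+}^\beta v $ and $ g = w $. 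With $ 0 < \gamma-\beta < 1/2 $, I would write $ \D_{a+}^{\gamma-\beta} = \D\circ\D_{a+}^{\gamma-\beta-1} $, integrate by parts on the left (boundary terms vanish since $ g $ is compactly supported), apply Fubini to swap the order of integration, and identify the resulting right-sided fractional integral of $ g' $ with $ -\D_{b-}^{\gamma-\beta}g $.

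The main obstacle is making the last step rigorous: the integration by parts and the commutation of $ \D $ with the right-sided fractional integral of order $ 1-(\gamma-\beta) $ involve the singular kernel $ (s-t)^{-(\gamma-\beta)} $. After the substitution $ \tau = s-t $ the singularity is moved to the origin of the integration variable, the boundary contribution at $ s=b $ vanishes because $ g(b)=0 $, and differentiation under the integral sign produces exactly $ \D_{b-}^{\gamma-\beta-1}g' $, which coincides with $ -\D_{b-}^{\gamma-\beta}g $ by \cref{def:frac_calc}.
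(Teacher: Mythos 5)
The paper offers no proof of this lemma to compare against: it defers to \cite[Section 3]{Luo2018Convergence}. Judged on its own, your strategy --- continuity of both bilinear forms, verification on smooth data via classical fractional integration by parts, then density --- is the natural one, and your computation for test functions is correct: with $\sigma:=\gamma-\beta\in(0,1/2)$ you write $\D_{a+}^{\sigma}=\D\circ\D_{a+}^{\sigma-1}$, integrate by parts, apply Fubini, and use that $\D_{b-}^{\sigma-1}g'=-\D_{b-}^{\sigma}g$ when $g(b)=0$, all of which checks out.

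The step that fails as written is the density claim for $v$: $C_c^\infty((a,b);X)$ is \emph{not} dense in ${}_0H^\beta(a,b;X)$ once $\beta>1/2$. For example, in ${}_0H^1(a,b)$ with norm $\nm{v'}_{L^2(a,b)}$, every $\phi\in C_c^\infty((a,b))$ satisfies $\snm{v(b)-\phi(b)}\leqslant (b-a)^{1/2}\nm{v'-\phi'}_{L^2(a,b)}$, so $v(t)=t-a$ (which does not vanish at $b$) cannot be approximated; your parenthetical only covers negative orders, while the lemma allows arbitrary real $\beta$. The repair is already latent in your own reduction: both sides depend on $v$ only through $f:=\D_{a+}^{\beta}v$, and $\D_{a+}^{\beta}$ is an isomorphism of ${}_0H^\beta(a,b;X)$ onto $L^2(a,b;X)$, so you should instead take $v$ in the dense set $\D_{a+}^{-\beta}\bigl(C_c^\infty((a,b);X)\bigr)$, for which $f$ is a test function. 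The identity then reduces to $\dual{\D_{a+}^{\sigma}f,w}=\dual{f,\D_{b-}^{\sigma}w}_{(a,b;X)}$ with $f\in C_c^\infty$ and $w$ ranging over a dense subset of ${}^0H^{\sigma}(a,b;X)=H^{\sigma}(a,b;X)$, where density is unproblematic because $\sigma<1/2$; your final paragraph already carries out exactly this verification. A smaller point: \cref{lem:regu} is stated only for $\gamma\leqslant\max\{0,\beta\}$, which is violated here whenever $\gamma>\max\{0,\beta\}$ (e.g.\ $\beta=0$, $0<\gamma<1/2$), so the boundedness of the left-hand pairing should be drawn from the extended mapping property in the cited reference rather than from \cref{lem:regu} verbatim.
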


\begin{rem}
	For the proofs of the above lemmas, we refer the reader to \cite[Section
	3]{Luo2018Convergence}.
\end{rem}

\subsection{Algorithm definition}
Given $ J \in \mathbb N_{>0} $, set $ \tau := T/J $ and $ t_j := j\tau $, $ 0
\leqslant j \leqslant J $, and we use $ I_j $ to denote the interval $ (t_{j-1},t_j)
$ for each $ 1 \leqslant j \leqslant J $. Let $ \mathcal K_h $ be a shape-regular
triangulation of $\Omega $ consisting of $ d $-simplexes, and we use $ h $ to denote
the maximum diameter of the elements in $ \mathcal K_h $. Define
\begin{align*}
	S_h & := \Big\{
		v_h \in \dot H^1(\Omega):\
		v_h \text{ is linear on each } K \in \mathcal K_h
	\Big\},\\
	W_{\tau,h} &:= \Big\{
		V \in L^2(0,T;S_h):\ V \text{ is constant on } I_j,\,
		1 \leqslant j \leqslant J
	\Big\}.
\end{align*}
For any $ V \in W_{\tau,h} $, we set
\begin{align*}
	V_j &:= \lim\limits_{t\to t_{j}-}V(t),
	\quad 1 \leqslant j \leqslant J, \\
	V^+_j &:= \lim\limits_{t\to t_{j}+}V(t),
	\quad 0 \leqslant j \leqslant J-1, \\
	\jmp{V_j} &:= V_j^{+} - V_j, \quad 0 \leqslant j \leqslant J,
\end{align*}
where the value of $ V_0 $ or $ V_J^{+} $ will be explicitly specified whenever
needed.

Assuming that $ u_0 \in S_h^* $ and $ f \in (W_{\tau,h})^* $, we define a numerical
solution $ U \in W_{\tau,h} $ to problem \cref{eq:model} by that $ U_0 = P_hu_0 $ and
\begin{equation}
	\label{eq:numer_sol}
	\sum_{j=0}^{J-1}\dual{\jmp{U_j},V^+_j}_\Omega +
	\dual{\nabla \D_{0+}^{-\alpha} U,\nabla V}_{\Omega \times (0,T)} =
	\dual{f,V}_{W_{\tau,h}}
\end{equation}
for all $ V \in W_{\tau,h} $, where $ P_h $ is the $ L^2 $-orthogonal projection onto
$ S_h $. Above and afterwards, for a Lebesgue measurable set $ \omega $ of $ \mathbb
R^l $ ($ l= 1,2,3,4 $), the symbol $ \dual{p,q}_\omega $ means $ \int_\omega pq $
whenever $ pq \in L^1(\omega) $. In addition, the symbol $ C_\times $ means a
positive constant depending only on its subscript(s), and its value may differ at
each occurrence.

\begin{thm}
	\label{thm:stab}
	Assume that $ u_0 \in L^2(\Omega) $. If $ f \in L^1(0,T;L^2(\Omega)) $, then
	\begin{equation}
		\label{eq:stab-1}
		\nm{U}_{L^\infty(0,T;L^2(\Omega))}
		\leqslant \sqrt2\ \nm{u_0}_{L^2(\Omega)} +
		2\nm{f}_{L^1(0,T;L^2(\Omega))}.
	\end{equation}
	If $ f \in {}_0H^{\alpha/2}(0,T;\dot H^{-1}(\Omega)) $, then
	\begin{equation}
		\label{eq:stab-2}
		\begin{aligned}
			& \nm{U}_{L^\infty(0,T;L^2(\Omega))} +
			\nm{U}_{{}_0H^{-\alpha/2}(0,T;\dot H^1(\Omega))} \\
			\leqslant{} &
			C_\alpha \big(
				\nm{u_0}_{L^2(\Omega)} +
				\nm{f}_{{}_0H^{\alpha/2}(0,T;\dot H^{-1}(\Omega))}
			\big).
		\end{aligned}
	\end{equation}
\end{thm}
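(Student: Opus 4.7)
The plan is to derive both bounds from a single energy identity obtained by testing \eqref{eq:numer_sol} with $V=U\chi_{(0,t_{J'})}$ for an arbitrary $J'\in\{1,\dots,J\}$. Since $U$ is piecewise constant with $U|_{I_{j+1}}=U_j^+=U_{j+1}$, the standard DG telescope
\[
\sum_{j=0}^{J'-1}\dual{\jmp{U_j},U_j^+}_\Omega = \tfrac{1}{2}\nm{U_{J'}}_{L^2(\Omega)}^2 - \tfrac{1}{2}\nm{U_0}_{L^2(\Omega)}^2 + \tfrac{1}{2}\sum_{j=0}^{J'-1}\nm{\jmp{U_j}}_{L^2(\Omega)}^2
\]
holds, while the semigroup identity $\D_{0+}^{-\alpha}=\D_{0+}^{-\alpha/2}\D_{0+}^{-\alpha/2}$, the formal adjointness of $\D_{0+}^{-\alpha/2}$ and $\D_{t_{J'}-}^{-\alpha/2}$ on $L^2(0,t_{J'})$, and \cref{lem:coer} applied with $\gamma=-\alpha/2\in(-1/2,0)$ give
\[
\int_0^{t_{J'}}\dual{\nabla \D_{0+}^{-\alpha}U,\nabla U}_\Omega\,dt \geq \cos(\alpha\pi/2)\,\nm{\nabla \D_{0+}^{-\alpha/2}U}_{L^2(0,t_{J'};L^2(\Omega))}^2.
\]
Combining these with $\nm{U_0}_{L^2}=\nm{P_h u_0}_{L^2}\leq\nm{u_0}_{L^2}$ produces the master inequality
\[
\tfrac{1}{2}\nm{U_{J'}}_{L^2}^2 + c_\alpha\nm{\nabla \D_{0+}^{-\alpha/2}U}_{L^2(0,t_{J'};L^2)}^2 \leq \tfrac{1}{2}\nm{u_0}_{L^2}^2 + \snm{\dual{f,U\chi_{(0,t_{J'})}}_{W_{\tau,h}}},
\]
with $c_\alpha:=\cos(\alpha\pi/2)>0$, from which both halves of the theorem will follow.

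For \eqref{eq:stab-1} I would discard the coercivity term, bound the forcing by $\snm{\dual{f,U\chi_{(0,t_{J'})}}}\leq M\,\nm{f}_{L^1(0,T;L^2)}$ with $M:=\max_j\nm{U_j}_{L^2}$, and then take the maximum over $J'$ to obtain the quadratic inequality $M^2\leq\nm{u_0}_{L^2}^2+2M\nm{f}_{L^1L^2}$. A Young-type split $2M\nm{f}_{L^1L^2}\leq M^2/2+2\nm{f}_{L^1L^2}^2$ immediately delivers the stated constants $\sqrt2$ and $2$.

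For \eqref{eq:stab-2} I would estimate the forcing by duality. Because $0<\alpha/2<1/2$, the spaces ${}_0H^{\alpha/2}$ and ${}^0H^{\alpha/2}$ coincide with equivalent norms, hence so do their duals, and \cref{lem:regu} identifies $\nm{\cdot}_{{}_0H^{-\alpha/2}(\dot H^1)}$ with $\nm{\nabla\D_{0+}^{-\alpha/2}(\cdot)}_{L^2(L^2)}$; these identifications yield
\[
\snm{\dual{f,U\chi_{(0,t_{J'})}}} \leq C_\alpha\nm{f}_{{}_0H^{\alpha/2}(0,t_{J'};\dot H^{-1})}\,\nm{\nabla \D_{0+}^{-\alpha/2}U}_{L^2(0,t_{J'};L^2)}.
\]
Absorbing the gradient factor into $c_\alpha\nm{\nabla \D_{0+}^{-\alpha/2}U}^2$ via Young's inequality, then taking the maximum in $J'$ for the $L^\infty(L^2)$ norm and specializing $J'=J$ for the ${}_0H^{-\alpha/2}(\dot H^1)$ norm, completes the estimate. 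The main obstacle is precisely this identification ${}_0H^{-\alpha/2}={}^0H^{-\alpha/2}$: without it, the coercivity output lives in the wrong dual space to pair with $f\in{}_0H^{\alpha/2}(\dot H^{-1})$, and it is exactly the regime $\alpha/2<1/2$ (guaranteed by $\alpha\in(0,1)$) that unlocks the equivalence. A secondary technical point is the fact that the restriction $f\mapsto f|_{(0,t_{J'})}$ does not inflate the ${}_0H^{\alpha/2}$-norm, which is standard for interpolation spaces of intermediate order.
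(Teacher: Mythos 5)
Your argument is correct and is exactly the energy-testing argument the paper has in mind: the paper omits the proof, citing Mustapha--McLean for \eqref{eq:stab-1} and noting that \eqref{eq:stab-2} follows from ``the techniques used in the proof of Theorem \ref{thm:conv_f_L2},'' which are precisely your steps (test with $U\chi_{(0,t_{J'})}$, telescope via \cref{lem:VV'}, coercivity via \cref{lem:coer} with $\gamma=-\alpha/2$, then $L^1$--$L^\infty$ or ${}_0H^{\alpha/2}$--${}^0H^{-\alpha/2}$ duality plus Young). Your handling of the two technical points you flag --- the identification ${}_0H^{-\alpha/2}={}^0H^{-\alpha/2}$ valid because $\alpha/2<1/2$, and the boundedness of restriction to $(0,t_{J'})$ --- is also consistent with the function-space setup in Section \ref{sec:pre}.
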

\noindent For the proof of \cref{eq:stab-1}, we refer the reader to \cite[Theorem
2.1]{Mustapha2009Discontinuous}. By the techniques used in the proof of
\cref{thm:conv_f_L2} (in \cref{sec:main}), the proof of \cref{eq:stab-2} is trivial and
hence omitted.

\subsection{Weak solution and regularity}
\label{ssec:regu}
Following \cite{Luo2018Convergence}, we introduce the weak solution to problem
\cref{eq:model} as follows. Define
\begin{align*}
	W & := {}_0H^{(\alpha+1)/4}(0,T;L^2(\Omega))
	\cap {}_0H^{-(\alpha+1)/4}(0,T;\dot H^1(\Omega)), \\
	\widehat W & := {}^0H^{(3-\alpha)/4}(0,T;L^2(\Omega))
	\cap {}^0H^{(1-3\alpha)/4}(0,T;\dot H^1(\Omega)),
\end{align*}
and endow them with the norms
\begin{align*}
	\nm{\cdot}_W &:= \max\left\{
		\nm{\cdot}_{{}_0H^{(\alpha+1)/4}(0,T;L^2(\Omega)) },\
		\nm{\cdot}_{{}_0H^{-(\alpha+1)/4}(0,T;\dot H^1(\Omega))}
	\right\}, \\
	\nm{\cdot}_{\widehat W} &:= \max\left\{
		\nm{\cdot}_{{}^0H^{(3-\alpha)/4}(0,T;L^2(\Omega)) },\
		\nm{\cdot}_{{}^0H^{(1-3\alpha)/4}(0,T;\dot H^1(\Omega))}
	\right\},
\end{align*}
respectively. Assuming that $ u_0 t^{-(\alpha+1)/2} \in W^* $ and $ f \in \widehat
W^* $, we call $ u \in W $ a weak solution to problem \cref{eq:model} if
\begin{equation}
	\label{eq:weak_sol_f}
	\begin{aligned}
		{}& \Dual{
			\D_{0+}^{(\alpha\!+\!1)/2} u, v
		}_{ {}^0\!H^{(\alpha\!+\!1)/4}(0,T;L^2(\Omega)\!) } \!+\!
		\Dual{
			\nabla\! \D_{0+}^{\!-(\alpha\!+\!1)/4}u,
			\nabla\! \D_{T-}^{\!-(\alpha\!+\!1)/4} v
		}_{ \Omega\times(0,T)} \\
		={}&
		\Dual{ f,\ \D_{T-}^{(\alpha-1)/2} v }_{\widehat W} +
		\Dual{\frac{t^{-(\alpha+1)/2}}{\Gamma((1-\alpha)/2)} u_0,\ v}_W
	\end{aligned}
\end{equation}
for all $ v \in W $. In the above definition we have used the fact that, by
\cref{lem:regu-basic,lem:coer},
\[
	{}^0H^{(\alpha+1)/4}(0,T;L^2(\Omega)) =
	{}_0H^{(\alpha+1)/4}(0,T;L^2(\Omega))
	\quad\text{with equivalent norms,}
\]
and
\[
	{}^0H^{-(\alpha+1)/4}(0,T;\dot H^1(\Omega)) =
	{}_0H^{-(\alpha+1)/4}(0,T;\dot H^1(\Omega))
	\quad\text{with equivalent norms.}
\]
By the well-known Lax-Milgram theorem and \cref{lem:coer,lem:coer,lem:regu}, a
routine argument yields that the above weak solution is well-defined and admits the
stability estimate
\[
	\nm{u}_W \leqslant C_\alpha \Big(
		\nm{f}_{\widehat W^*} +
		\nm{t^{-(\alpha+1)/2} u_0}_{W^*}
	\Big).
\]
Furthermore, by a trivial modification of the proof of \cite[Theorems
4.2]{Luo2018Convergence}, we readily obtain the following regularity results.
\begin{thm}
	\label{thm:regu-pde}
	If $ u_0 = 0 $ and $ f \in {}_0H^\gamma(0,T;\dot H^\beta(\Omega)) $ with $
	(\alpha-3)/4 \leqslant \gamma < \infty $ and $ 0 \leqslant \beta < \infty $, then
	the solution $ u $ to problem \cref{eq:weak_sol_f} satisfies that
	\begin{align}
		& \D_{0+}^{\gamma+1} u - \Delta \D_{0+}^{\gamma-\alpha} u =
		\D_{0+}^\gamma f,
		\label{eq:strong-form} \\
		& \nm{u}_{{}_0H^{\gamma+1}(0,T;\dot H^\beta(\Omega))} +
		\nm{u}_{{}_0H^{\gamma-\alpha}(0,T;\dot H^{2+\beta}(\Omega))} \leqslant
		C_{\alpha,\gamma} \nm{f}_{{}_0H^\gamma(0,T;\dot H^\beta(\Omega))}.
		\label{eq:regu-pde}
	\end{align}
	Moreover, if $ 0 \leqslant \gamma < \alpha+1/2 $ then
	\begin{equation}
		\label{eq:regu-pde-C}
		\nm{u}_{C([0,T];\dot H^{\beta+(2\gamma+1)/(\alpha+1)}(\Omega))}
		\leqslant C_{\alpha,\gamma}
		\nm{f}_{{}_0H^\gamma(0,T;\dot H^\beta(\Omega))},
	\end{equation}
	and if $ \gamma = \alpha + 1/2 $ then
	\begin{equation}
		\nm{u}_{
			C([0,T];\dot H^{\beta+2(1-\epsilon)}(\Omega))
		} \leqslant \frac{C_\alpha}{\sqrt\epsilon}
		\nm{f}_{{}_0H^{\alpha+1/2}(0,T;\dot H^\beta(\Omega))}
	\end{equation}
	for all $ 0 < \epsilon < 1 $.
\end{thm}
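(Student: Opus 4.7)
The plan is to expand in the orthonormal eigenbasis $\{\phi_n\}$ of the Dirichlet Laplacian and thereby reduce the problem to a countable family of scalar fractional ODEs. Writing $u=\sum_{n} u_n(t)\phi_n$ and $f=\sum_n f_n(t)\phi_n$, identity \eqref{eq:weak_sol_f} with $u_0=0$ decouples into
\[
    u_n' + \lambda_n\,\D_{0+}^{-\alpha} u_n = f_n,\qquad u_n(0)=0,\quad n\in\mathbb N,
\]
understood in the extended ${}_0H^\bullet(0,T)$-sense of \eqref{eq:extended_frac_int}. Applying $\D_{0+}^\gamma$ and using the composition rule underlying \cref{lem:regu}, each scalar equation becomes $\D_{0+}^{\gamma+1} u_n + \lambda_n\,\D_{0+}^{\gamma-\alpha} u_n = \D_{0+}^\gamma f_n$, which reassembled over $n$ is precisely \eqref{eq:strong-form}.

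For the norm bound \eqref{eq:regu-pde} I would first prove the scalar estimate
\[
    \nm{\D_{0+}^{\gamma+1} u_n}_{L^2(0,T)} + \lambda_n\,\nm{\D_{0+}^{\gamma-\alpha} u_n}_{L^2(0,T)} \le C_{\alpha,\gamma}\,\nm{\D_{0+}^\gamma f_n}_{L^2(0,T)}
\]
uniformly in $n$, then multiply by $\lambda_n^\beta$ and sum by Parseval. The scalar estimate is obtained by pairing the coordinate equation with a symmetric partner built by shifting half the difference between the two derivative orders onto $\D_{T-}^\bullet$; \cref{lem:coer} with exponent $(1-\alpha)/2\in(0,1/2)$ then forces both quadratic terms to be nonnegative, and \cref{lem:regu} handles the conversion back to native ${}_0H^\bullet$ norms. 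The ${}_0H^{\gamma-\alpha}(0,T;\dot H^{\beta+2})$ part of \eqref{eq:regu-pde} follows because $-\Delta\,\D_{0+}^{\gamma-\alpha} u = \D_{0+}^\gamma f - \D_{0+}^{\gamma+1} u$ and $\nm{-\Delta\phi_n}=\lambda_n$.

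For the continuous-in-time estimates \eqref{eq:regu-pde-C}, real interpolation of the two scalar bounds above yields
\[
    \lambda_n^\theta\,\nm{\D_{0+}^{\gamma+1-\theta(\alpha+1)} u_n}_{L^2(0,T)} \le C\,\nm{\D_{0+}^\gamma f_n}_{L^2(0,T)},\qquad \theta\in[0,1].
\]
Selecting $\theta=(2\gamma+1)/(2(\alpha+1))$ makes the time order exactly $1/2$, and the corresponding spatial exponent becomes $2\theta=(2\gamma+1)/(\alpha+1)$; for $\gamma<\alpha+1/2$ one has $\theta<1/2$, so a small upward perturbation of the time order coupled with a small downward perturbation of $\theta$ opens the Sobolev embedding ${}_0H^{1/2+\eta}(0,T)\hookrightarrow C([0,T])$ with a constant independent of $n$, and squaring, weighting by $\lambda_n^\beta$, and summing in $n$ yields \eqref{eq:regu-pde-C}. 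The critical case $\gamma=\alpha+1/2$ lands exactly on $\theta=1/2$, and the well-known $(\epsilon)^{-1/2}$ blow-up of the Sobolev embedding constant as one approaches the critical trace index produces the announced loss.

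The main obstacle is the scalar positivity argument: one must exhibit explicit test functions so that $(\D_{0+}^{\gamma+1} u_n,\,\cdot)_{L^2} + \lambda_n(\D_{0+}^{\gamma-\alpha} u_n,\,\cdot)_{L^2}$ is simultaneously coercive in both quadratic quantities for the full range $(\alpha-3)/4\le\gamma<\infty$, while keeping every auxiliary fractional exponent inside the open interval $(-1/2,1/2)$ required by \cref{lem:coer}. Everything else---commuting fractional derivatives, real interpolation, Parseval summation, and the Sobolev embedding---is routine, which is exactly what the text refers to by the trivial modification of \cite[Theorem 4.2]{Luo2018Convergence}.
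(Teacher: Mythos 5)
The paper gives no self-contained proof of this theorem --- it is obtained by ``a trivial modification'' of \cite[Theorem 4.2]{Luo2018Convergence} --- so your spectral reduction to the scalar problems $u_n'+\lambda_n\D_{0+}^{-\alpha}u_n=f_n$, the uniform-in-$n$ bound $\nm{\D_{0+}^{\gamma+1}u_n}_{L^2(0,T)}+\lambda_n\nm{\D_{0+}^{\gamma-\alpha}u_n}_{L^2(0,T)}\lesssim\nm{\D_{0+}^{\gamma}f_n}_{L^2(0,T)}$, and Parseval summation in $n$ are the right skeleton and are consistent with the cited argument. On the obstacle you flag at the end: you do not need to rerun the coercivity argument at every level $\gamma$ with auxiliary exponents confined to $(-1/2,1/2)$; the equation is a convolution equation in time, so $\D_{0+}^{\gamma-\gamma_0}$ commutes with it, and the a priori estimate at one base level $\gamma_0$ applied to $\D_{0+}^{\gamma-\gamma_0}u_n$ already yields the whole scale via \cref{lem:regu}.

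The genuine gap is in your derivation of \cref{eq:regu-pde-C}. First, a slip: with $\theta=(2\gamma+1)/(2(\alpha+1))$ the hypothesis $\gamma<\alpha+1/2$ gives $\theta<1$, not $\theta<1/2$ (already $\gamma=\alpha$ gives $\theta=(2\alpha+1)/(2\alpha+2)>1/2$). More importantly, your perturbation argument cannot reach the stated exponent: raising the time order above $1/2$ forces $\theta$ to decrease, hence the spatial weight drops to $\lambda_n^{\theta-\eta}$, and you obtain only $u\in C([0,T];\dot H^{\beta+(2\gamma+1)/(\alpha+1)-\delta}(\Omega))$ for every $\delta>0$ --- i.e.\ the lossy estimate of the critical case, not the endpoint claimed in \cref{eq:regu-pde-C}. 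To get the endpoint you must interpolate multiplicatively between two regularity levels straddling $1/2$: an Agmon/Gagliardo--Nirenberg inequality $\nm{v}_{C[0,T]}^2\leqslant C_{s_1,s_2}\nm{v}_{{}_0H^{s_1}(0,T)}\nm{v}_{{}_0H^{s_2}(0,T)}$ with $s_1<1/2<s_2$ and $s_1+s_2=1$, applied with $s_i=\gamma+1-\theta_i(\alpha+1)$ and $\theta_i\in[0,1]$, gives $\lambda_n^{(\theta_1+\theta_2)/2}\nm{u_n}_{C[0,T]}\lesssim\nm{\D_{0+}^{\gamma}f_n}_{L^2(0,T)}$ with $(\theta_1+\theta_2)/2=(2\gamma+1)/(2(\alpha+1))$ exactly. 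The possibility of choosing $s_1<1/2$ inside the admissible range $[\gamma-\alpha,\gamma+1]$ is precisely the hypothesis $\gamma<\alpha+1/2$, and its failure at $\gamma=\alpha+1/2$ is what produces the $\epsilon^{-1/2}$ loss in the final estimate; the theorem's case split is itself the signal that a bilinear endpoint inequality, rather than a single-exponent Sobolev embedding, is needed.
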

\begin{rem}
	For any $ v \in W $, since \cite[Lemma 33.2]{Tartar2007} implies
	\begin{small}
	\[
		\sqrt{
			\int_0^T t^{-(\alpha+1)/2} \nm{v(t)}_{L^2(\Omega)}^2
			\, \mathrm{d}t
		}\, \leqslant C_\alpha
		\nm{v}_{{}_0H^{(\alpha+1)/4}(0,T;L^2(\Omega))}
		\leqslant C_\alpha \nm{v}_W,
	\]
	\end{small}
	we have
	\begin{small}
	\begin{align*}
		& \Snm{
			\int_0^T t^{-(\alpha+1)/2}
			\dual{u_0, v(t)}_\Omega \, \mathrm{d}t
		} \\
		\leqslant{} &
		\nm{u_0}_{L^2(\Omega)} \sqrt{
			\int_0^T t^{-(\alpha+1)/2}dt \, \mathrm{d}t
		} \,\, \sqrt{
			\int_0^T t^{-(\alpha+1)/2} \nm{v(t)}_{L^2(\Omega)}^2 \, \mathrm{d}t
		} \\
		\leqslant{} &
		C_\alpha \nm{u_0}_{L^2(\Omega)} \nm{v}_W.
	\end{align*}
	\end{small}
	Therefore, $ t^{-(\alpha+1)/2} u_0 \in W^* $ and hence the above weak solution is
	well-defined for the case $ u_0 \in L^2(\Omega) $.
\end{rem}

Next, we briefly summarize two other methods to define the weak solution to problem
\cref{eq:model}. The first method uses the Mittag-Leffler function to define the weak
solution to problem \cref{eq:model} with $ f = 0 $ and $ u_0 \in \dot H^{r}(\Omega)
$, $ r \in \mathbb R $, by that \cite{McLean2007}
\[
	u(t) = \sum_{n=0}^{\infty}
	\dual{u_0,\phi_n}_{\dot H^{-r}(\Omega)}
	E_{\alpha+1,1}\big(-\lambda_n t^{\alpha+1}\big) \phi_n,
	\quad 0 \leqslant t \leqslant T,
\]
where, for any $ \beta, \gamma > 0 $, the Mittag-Leffler function $
E_{\beta,\gamma} $ is defined by
\[
	E_{\beta,\gamma}(z) := \sum_{n=0}^\infty
	\frac{z^n}{\Gamma(n\beta+\gamma)},
	\quad z \in \mathbb C.
\]
Then we can investigate the regularity of this weak solution by a growth estimate
\cite{Podlubny1998}: for any $ \beta,\gamma,t >0 $,
\[
	\snm{E_{\beta,\gamma}(-t)} \leqslant{}
	\frac{C_{\beta,\gamma}}{1+t}.
\]

The second method uses the well-known transposition technique to define the weak
solution to problem \cref{eq:model} as follows. Define
\[
	G := {}^0H^1(0,T;L^2(\Omega)) \cap {}^0H^{-\alpha}(0,T;\dot H^2(\Omega)),
\]
and equip this space with the norm
\[
	\nm{\cdot}_G := \max\left\{
		\nm{\cdot}_{{}^0H^1(0,T;L^2(\Omega))},\
		\nm{\cdot}_{{}^0H^{-\alpha}(0,T;\dot H^2(\Omega))}
	\right\}.
\]
Also, define
\[
	G_\mathrm{tr} := \big\{ v(0):\ v \in G \big\},
\]
and endow this space with the norm
\[
	\nm{v_0}_{G_\mathrm{tr}} :=
	\inf_{v \in G,\ v(0) = v_0} \nm{v}_G
	\quad \forall v_0 \in G_\mathrm{tr}.
\]
Assuming that $ u_0 \in G_\mathrm{tr}^* $ and $ f \in G^* $, we call $ u $ a weak
solution to problem \cref{eq:model} if
\[
	\dual{u, -v' - \Delta \D_{T-}^{-\alpha} v}_{\Omega \times (0,T)} =
	\dual{f, v}_G + \dual{u_0, v(0)}_{G_\mathrm{tr}}
\]
for all $ v \in G $. By the symmetric version of \cref{thm:regu-pde}, applying the
famous Babu\v{s}ka-Lax-Milgram thoerem proves that the above weak solution is
well-defined.

\section{Discretizations of two fractional ordinary equations}
\label{sec:disc_regu}
\subsection{An auxiliary function}
For any $ z \in \{x+iy:\ 0 < x < \infty, -\infty < y < \infty \} $, define
\begin{equation}
	\label{eq:def-psi}
	\psi(z) := \frac{e^z-1}{\Gamma(2+\alpha)}
	\sum_{k=1}^\infty k^{1+\alpha} e^{-kz}.
\end{equation}
By the standard analytic continuation technique, $ \psi $ has a Hankel integral
representation (cf.~\cite[(12.1)]{Wood1992} and \cite[(21)]{McLean2015Time})
\begin{align*}
	\psi(z) & = \frac{e^z-1}{2\pi i}
	\int_{-\infty}^{({0+})} \frac{w^{-2-\alpha}}{e^{z-w}-1} \, \mathrm{d}w,
	\quad z \in \mathbb C \setminus (-\infty,0],
\end{align*}
where $ \int_{-\infty}^{({0+})} $ means an integral on a piecewise smooth and
non-self-intersecting path enclosing the negative real axis and orienting
counterclockwise, $ 0 $ and $ \{z+2k\pi i \neq 0: k \in \mathbb Z\} $ lie on the
different sides of this path, and $ w^{-2-\alpha} $ is evaluated in the sense that
\[
	w^{-2-\alpha} = e^{-(2+\alpha) \operatorname{Log}w}.
\]
By Cauchy's integral theorem and Cauchy's integral formula, it is clear that
(cf.~\cite[(13.1)]{Wood1992})
\begin{equation}
	\label{eq:psi}
	\psi(z) = (e^z-1) \sum_{k \in \mathbb Z} (z+2k\pi i)^{-2-\alpha},
\end{equation}
for all $ z \in \mathbb C \setminus (-\infty, 0] $ satisfying $ -2\pi <
\operatorname{Im} z < 2\pi $. From this series representation, it follows that
\begin{equation}
	\label{eq:psi-conj}
	\psi(z) = \overline{\psi(\overline z)}
	\quad \text{for all}\, z \in \mathbb C \setminus (-\infty,0]
	\text{ with } \snm{\operatorname{Im}z} < 2\pi.
\end{equation}
Moreover,
\begin{equation}
	\label{eq:shit-7}
	\psi(z) - (e^z-1)z^{-2-\alpha} \quad
	\text{is analytic on }
	\{w \in \mathbb C:\ \snm{\operatorname{Im} w} < 2\pi\},
\end{equation}
and hence
\begin{equation}
	\label{eq:psi-singu}
	\begin{aligned}
		& \lim_{r \to 0+} \frac{\psi(re^{i\theta})}{
			r^{-1-\alpha}\big(
				\cos((1+\alpha)\theta) - i \sin((1+\alpha)\theta)
			\big)
		} = 1 \\
		& \text{ uniformly for all }  -\pi < \theta < \pi.
	\end{aligned}
\end{equation}

\begin{lem}
	\label{lem:1+mupsi}
	There exist $ \pi/2 < \theta_\alpha \leqslant (\alpha+3)/(4\alpha+4)\pi $,
	depending only on $ \alpha $, and $ 0 < \delta_{\alpha,\mu} < \infty $, depending
	only on $ \alpha $ and $ \mu $, such that
	\begin{equation}
		\label{eq:1+mupsi}
		\begin{aligned}
			& 1 + \mu \psi(z) \neq 0 \quad\text{ for all }
			z  \in \big\{
				w \in \mathbb C \setminus \{0\}:
				-\pi \leqslant \operatorname{Im} w \leqslant \pi
			\big\} \bigcap {} \\
			& \qquad\qquad\qquad \big\{
				w \in \mathbb C:\
				0 < \operatorname{Re} w \leqslant \delta_{\alpha,\mu}
				\text{ or } \pi/2 \leqslant
				\snm{\operatorname{Arg} w} \leqslant \theta_\alpha
			\big\},
		\end{aligned}
	\end{equation}
	where $ \mu $ is a nonnegative constant.
\end{lem}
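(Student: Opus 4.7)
The claim is trivial for $\mu=0$; for $\mu>0$ it reduces to showing $\psi(z)\neq -1/\mu$, i.e., that $\psi$ avoids the negative real ray on the region. The crucial inequality is that any $\theta_\alpha$ in the stated range satisfies $(1+\alpha)\theta_\alpha \leqslant (\alpha+3)\pi/4 < \pi$ for $\alpha\in(0,1)$. Invoking \cref{eq:psi-singu} uniformly in $|\theta|\leqslant\theta_\alpha$, I would choose $r_0=r_0(\alpha)>0$ such that, for $0<r\leqslant r_0$, the argument of $\psi(re^{i\theta})$ stays within $(\pi-(1+\alpha)\theta_\alpha)/2$ of $-(1+\alpha)\theta$ and $|\psi(re^{i\theta})|\geqslant \tfrac12 r^{-1-\alpha}$. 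Since $|{-}(1+\alpha)\theta|\leqslant(1+\alpha)\theta_\alpha<\pi$, the argument of $\psi$ is bounded away from $\pm\pi$, so $\psi(z)\notin(-\infty,0]$ on $\{0<|z|\leqslant r_0\}$ intersected with the region. This handles the near-origin part of both regions.

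\smallskip
For $|z|\geqslant r_0$ I would first analyze the imaginary axis. Using \cref{eq:psi}, the factorization $e^{iy}-1 = 2i\sin(y/2)e^{iy/2}$, and splitting $\sum_{k\in\mathbb Z}(i(y+2k\pi))^{-2-\alpha}$ by the sign of $y+2k\pi$ with the principal branch, a direct computation gives for $y\in(0,2\pi)$
\[
\operatorname{Im}\psi(iy) = 2\sin(y/2)\bigl[\sin((\alpha+2)\pi/2)\, Q(y)\sin(y/2) + \cos((\alpha+2)\pi/2)\, P(y)\cos(y/2)\bigr],
\]
where $P=S_++S_-$, $Q=S_+-S_-$ with $S_+(y)=\sum_{k\geqslant0}(y+2k\pi)^{-2-\alpha}$ and $S_-(y)=\sum_{k\geqslant1}(2k\pi-y)^{-2-\alpha}$. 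For $\alpha\in(0,1)$ the two trigonometric coefficients equal $-\cos(\alpha\pi/2)<0$ and $-\sin(\alpha\pi/2)<0$; termwise differentiation shows $Q$ is strictly decreasing on $(0,2\pi)$, and the reindexing $k\mapsto k-1$ yields $S_-(\pi)=S_+(\pi)$, so $Q(\pi)=0$ and $Q(y)>0$ on $(0,\pi)$. Therefore the bracket is strictly negative on $(0,\pi)$, giving $\operatorname{Im}\psi(iy)<0$. At the endpoint $y=\pi$ both $Q(\pi)=0$ and $\cos(\pi/2)=0$ make $\operatorname{Im}\psi(i\pi)=0$, but then $\operatorname{Re}\psi(i\pi)=-2\cos((\alpha+2)\pi/2)P(\pi)>0$, so $\psi(i\pi)$ is positive real. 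Combined with \cref{eq:psi-conj}, this proves $\psi(iy)\notin(-\infty,0]$ for every $y\in[-\pi,\pi]\setminus\{0\}$.

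\smallskip
Region (A) is now closed by continuity: on the compact arc $\{iy:r_0\leqslant|y|\leqslant\pi\}$, $\psi$ is bounded away from $-1/\mu$ by some $\eta_{\alpha,\mu}>0$, and uniform continuity on a thin strip neighborhood (by analyticity, \cref{eq:shit-7}) lets one choose $\delta_{\alpha,\mu}\leqslant r_0$ such that the same bound holds on $\{x+iy:0\leqslant x\leqslant\delta_{\alpha,\mu},\, r_0\leqslant|y|\leqslant\pi\}$; together with the first paragraph this covers region (A). For region (B) with $|z|\geqslant r_0$, $\operatorname{Im}\psi$ is harmonic on the interior of the upper wedge $\{re^{i\theta}:\pi/2<\theta<\theta_\alpha,\ 0<r\sin\theta<\pi\}$, strictly negative on the boundary ray $\theta=\pi/2$ and on the part of $\theta=\theta_\alpha$ near the origin by what has been established; after verifying the sign on the remaining boundary segments $\theta=\theta_\alpha$ (far from the origin) and $\operatorname{Im}(z)=\pi$ directly from \cref{eq:psi}, the strong maximum principle yields $\operatorname{Im}\psi<0$ throughout the interior. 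The lower wedge follows from \cref{eq:psi-conj}, and local continuity at the corner $z=\pm i\pi$ (where $\psi$ is positive real) closes the argument.

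\smallskip
The principal technical obstacle is that last step, namely the sign analysis of $\operatorname{Im}\psi$ on the far boundary of the wedge, where the origin-asymptotic \cref{eq:psi-singu} is no longer available. My plan there is to use \cref{eq:shit-7} to decompose $\psi(z)=(e^z-1)z^{-2-\alpha}+R(z)$ with $R$ analytic and bounded on the strip $|\operatorname{Im}z|<2\pi$, whereupon a closed-form sign analysis of the principal singular term, combined with a uniform bound on $R$, suffices; an alternative would be a direct series manipulation of \cref{eq:psi} on each straight boundary segment modeled on the imaginary-axis calculation.
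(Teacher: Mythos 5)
Your handling of the origin via \cref{eq:psi-singu} and your analysis on the imaginary axis are sound and consistent with the paper's computation (which likewise establishes $\operatorname{Im}\psi(iy)<0$ for $0<y<\pi$ and $\operatorname{Re}\psi(i\pi)>0$); the continuity argument for the vertical strip $0<\operatorname{Re}z\leqslant\delta_{\alpha,\mu}$ also works, after a harmless adjustment of the compact arc (start it at $r_0/2$ rather than $r_0$, say) so that it and the near-origin disc actually cover the whole strip. The genuine gap is the sector $\pi/2\leqslant\snm{\operatorname{Arg}z}\leqslant\theta_\alpha$ away from the origin. The maximum-principle argument requires $\operatorname{Im}\psi\leqslant0$ on the \emph{entire} boundary of the wedge, and you never establish this on the ray $\operatorname{Arg}z=\theta_\alpha$ away from $0$ or on the segment $\operatorname{Im}z=\pi$, $\operatorname{Re}z<0$; you defer it as ``the principal technical obstacle.'' It is not a routine verification: at the corner $i\pi$ one only knows $\operatorname{Im}\psi(i\pi)=0$, so the sign of $\operatorname{Im}\psi$ on the adjacent top segment in the left half-plane is genuinely in doubt. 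Moreover, your fallback plan --- writing $\psi(z)=(e^z-1)z^{-2-\alpha}+R(z)$ and beating a uniform bound on $R$ by a sign analysis of the principal term --- cannot work on a region bounded away from the origin, where the principal term is $O(1)$ and hence not dominant over $R$.

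The missing idea, which is exactly how the paper closes this step, is that $\theta_\alpha$ is at your disposal: the lemma only asks for \emph{some} $\theta_\alpha>\pi/2$. Since $\psi([i\delta_\alpha,\,i\pi])$ is a compact set disjoint from the closed ray $(-\infty,0]$ (by your own imaginary-axis analysis), it has positive distance to that \emph{whole ray} --- not merely to the single point $-1/\mu$ --- which is what makes the resulting constant independent of $\mu$. Uniform continuity of $\psi$ then yields an $r_\alpha>0$, depending only on $\alpha$, with $\psi(z)\notin(-\infty,0]$ on the rectangle $-r_\alpha\leqslant\operatorname{Re}z\leqslant0$, $\delta_\alpha\leqslant\operatorname{Im}z\leqslant\pi$. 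Setting $\theta_\alpha:=\pi/2+\arctan(r_\alpha/\pi)$ forces the part of the wedge with $\operatorname{Im}z\geqslant\delta_\alpha$ into that rectangle, while the part with $\operatorname{Im}z\leqslant\delta_\alpha$ is already covered by the near-origin asymptotics. No sign information on the far boundary is needed, and the maximum principle can be dispensed with entirely.
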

\begin{proof}
	By \cref{eq:psi-singu}, there exists $ 0 < \delta_\alpha < \pi $, depending only on
	$ \alpha $, such that $ \operatorname{Im} \psi(z) < 0 $ and hence
	\begin{small}
	\begin{equation}
		\label{eq:shit-4}
		1+\mu\psi(z) \neq 0 \text{ for all }
		z \in \Big\{
			w \in \mathbb C:\ \pi/2
			\leqslant \operatorname{Arg} w \leqslant
			\frac{\alpha+3}{4(\alpha+1)}\pi,\
			0 < \operatorname{Im} w \leqslant \delta_\alpha
		\Big\}.
	\end{equation}
	\end{small}
	For $ 0 < y \leqslant \pi $, by \cref{eq:psi} we have
	\begin{align}
		\psi(iy) & = (e^{iy}-1) \sum_{k=-\infty}^\infty
		(iy+2k\pi i)^{-2-\alpha} \notag \\
		& =
		(e^{iy}-1) \Big(
			\sum_{k=-\infty}^{-1}  (-2k\pi - y)^{-2-\alpha}
			(-i)^{-2-\alpha} +
			\sum_{k=0}^\infty (2k\pi+y)^{-2-\alpha} i^{-2-\alpha}
		\Big) \notag \\
		&=
		(1-e^{iy})\Big(
			\sum_{k=1}^\infty (2k\pi - y)^{-2-\alpha}
			e^{i\alpha\pi/2} + \sum_{k=0}^\infty (2k\pi+y)^{-2-\alpha} e^{-i\alpha\pi/2}
		\Big) \notag \\
		&= (1-e^{iy}) (A + iB), \label{eq:psi-A-B}
	\end{align}
	where
	\begin{align*}
		A &:=  \cos(\alpha\pi/2)
		\sum_{k=0}^\infty \Big(
			(2k\pi + 2\pi - y)^{-2-\alpha}  +
			(2k\pi+y)^{-2-\alpha}
		\Big), \\
		B &:= \sin(\alpha\pi/2) \sum_{k=0}^\infty
		\Big(
			(2k\pi+2\pi-y)^{-2-\alpha} -
			(2k\pi+y)^{-2-\alpha}
		\Big).
	\end{align*}
	It follows that
	\begin{align*}
		\operatorname{Re} \psi(iy) = A (1-\cos y) + B \sin y, \\
		\operatorname{Im} \psi(iy) = B(1-\cos y) - A \sin y.
	\end{align*}
	A straightforward computation then gives
	\begin{align}
		\operatorname{Re} \psi(i\pi) =  4\pi^{-2-\alpha} \cos(\alpha\pi/2)
		\sum_{k=1}^\infty (2k-1)^{-2-\alpha} > 0, \label{eq:psipi+} \\
		\operatorname{Im} \psi(iy) < 0, \quad 0 < y < \pi, \label{eq:psipi+2}
	\end{align}
	and hence, by the continuity of $ \psi $ in
	\[
		\left\{
			z \in \mathbb C \setminus (-\infty,0]:
			-2\pi < \operatorname{Im} \psi(z) < 2\pi
		\right\},
	\]
	a routine argument yields that there exists $ 0 < r_\alpha \leqslant
	\delta_\alpha\tan((1-\alpha)/(4\alpha+4)\pi) $, depending only on $ \alpha $, such
	that
	\begin{equation}
		\label{eq:426-2}
		1+\mu\psi(z) \neq 0 \text{ for all }
		z \in \left\{
			w \in \mathbb C:\
			-r_\alpha \leqslant \operatorname{Re} w \leqslant 0,\
			\delta_\alpha \leqslant \operatorname{Im} w \leqslant \pi
		\right\}.
	\end{equation}
	By \cref{eq:shit-4,eq:426-2}, letting $ \theta_\alpha := \pi/2 +
	\operatorname{arctan}(r_\alpha/\pi) $ yields
	\begin{equation}
		\label{eq:shit-10}
		1 + \mu\psi(z) \neq 0 \text{ for all }
		z \in \{w \in \mathbb C:\
		\pi/2 \leqslant \operatorname{Arg}w \leqslant \theta_\alpha,\
		0 < \operatorname{Im} w \leqslant \pi\}.
	\end{equation}
	In addition, by \cref{eq:psipi+}, \cref{eq:psipi+2}, \cref{eq:psi-singu} and the
	continuity of $ \psi $ in
	\[
		\left\{
			z \in \mathbb C \setminus (-\infty,0]:\
			-2\pi < \operatorname{Im} z < 2\pi
		\right\},
	\]
	there exists $ \delta_{\alpha,\mu} > 0 $ depending only on $ \alpha $ and $ \mu $
	such that
	\begin{equation}
		\label{eq:426-3}
		1+\mu\psi(z) \neq 0 \text{ for all }
		z \in \{
			w \in \mathbb C \setminus \{0\}:\
			0 \leqslant \operatorname{Re} w \leqslant \delta_{\alpha,\mu},\
			0 \leqslant \operatorname{Im} w \leqslant \pi
		\}.
	\end{equation}
	Finally, by \cref{eq:psi-conj}, combining \cref{eq:shit-10,eq:426-3} proves
	\cref{eq:1+mupsi} and hence this lemma.
\end{proof}

\begin{lem}
	\label{lem:psi-growth}
	For any $ \mu > 0 $ and $ 0 < y \leqslant \pi $,
	\begin{equation}
		\label{eq:psi-growth}
		\snm{1+\mu\psi(iy)} > C_\alpha(1+\mu y^{-1-\alpha}).
	\end{equation}
\end{lem}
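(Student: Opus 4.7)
The plan is to separate the estimate into two independent pieces: a lower bound $|\psi(iy)| \geq c_\alpha y^{-1-\alpha}$ on the modulus, and a sharp sectorial bound on $\arg\psi(iy)$ that keeps $\mu\psi(iy)$ uniformly away from the negative real axis for every $\mu > 0$. These are then combined through the elementary geometric inequality $|1+z| \geq |\cos(\arg z / 2)|\,(1+|z|)$, valid for all $z \in \mathbb{C}\setminus\{0\}$, which is an immediate consequence of $|1+z|^2 = (1+|z|)^2 - 4|z|\sin^2(\arg z/2)$ combined with $4|z| \leq (1+|z|)^2$.

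The key computational step is to recast the expression for $\psi(iy)$ derived in \eqref{eq:psi-A-B} in a factored form that exposes its argument. Introducing the positive quantities
\[
  P := \sum_{k=0}^\infty (2k\pi+2\pi-y)^{-2-\alpha}, \qquad Q := \sum_{k=0}^\infty (2k\pi+y)^{-2-\alpha},
\]
one has $A+iB = Pe^{i\alpha\pi/2} + Qe^{-i\alpha\pi/2}$, and together with $1-e^{iy} = -2i\sin(y/2)\,e^{iy/2}$ this gives
\[
  \psi(iy) = -2i\sin(y/2)\,e^{iy/2}\bigl(Pe^{i\alpha\pi/2} + Qe^{-i\alpha\pi/2}\bigr).
\]
The second factor, being a combination of $e^{\pm i\alpha\pi/2}$ with positive coefficients, has argument in $[-\alpha\pi/2,\alpha\pi/2]$, while the first has argument $y/2 - \pi/2 \in (-\pi/2, 0]$ for $y \in (0,\pi]$. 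Adding the two ranges forces $\arg\psi(iy) \in [-(1+\alpha)\pi/2,\alpha\pi/2]$, which sits at angular distance at least $(1-\alpha)\pi/2 > 0$ from $\{-\pi,\pi\}$. For the modulus, retaining only the $k=0$ term of $A = \cos(\alpha\pi/2)(P+Q)$ gives $A \geq \cos(\alpha\pi/2)\,y^{-2-\alpha}$, and combining with $|\psi(iy)| = 2\sin(y/2)\sqrt{A^2+B^2} \geq 2\sin(y/2)\,A$ together with $\sin(y/2) \geq y/\pi$ on $(0,\pi]$ yields $|\psi(iy)| \geq (2\cos(\alpha\pi/2)/\pi)\,y^{-1-\alpha}$.

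With both pieces in hand, applying the geometric inequality with $z = \mu\psi(iy)$ and using $|\arg z/2| \leq (1+\alpha)\pi/4 < \pi/2$ gives $|1+\mu\psi(iy)| \geq \sin((1-\alpha)\pi/4)\,(1 + \mu|\psi(iy)|)$, and invoking the modulus bound together with the trivial fact $1 + c\mu y^{-1-\alpha} \geq \min(1,c)(1 + \mu y^{-1-\alpha})$ produces \eqref{eq:psi-growth}. The main obstacle is spotting the factorization $A+iB = Pe^{i\alpha\pi/2} + Qe^{-i\alpha\pi/2}$; without it one would be forced to compare $\mathrm{Re}\,\psi(iy)$ and $\mathrm{Im}\,\psi(iy)$ piecewise in $y$, which readily loses uniformity in $\mu$ and would not give the sharp constant that makes the sector lie strictly inside $(-\pi,\pi)$.
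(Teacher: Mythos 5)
Your proof is correct, and it takes a genuinely different route from the paper's. The paper first extracts the bound $\snm{1+\mu\psi(iy)} > C_\alpha \mu y^{-1-\alpha}$ from sign information on $\operatorname{Re}\psi$ and $\operatorname{Im}\psi$ (via the asymptotics \cref{eq:psi-singu} and the computations \cref{eq:psipi+,eq:psipi+2}), then splits according to whether $\mu \geqslant y^{1+\alpha}$, handling the remaining regime $\mu < y^{1+\alpha}$ by writing $\psi(iy)=(iy)^{-1-\alpha}+y^{-1-\alpha}g(y)$ with $g(0)=0$ for small $y$ and invoking the extreme value theorem together with \cref{lem:1+mupsi} for $y$ bounded away from $0$; the resulting constants are non-explicit and the argument leans on compactness. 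You instead read off from the intermediate line of \cref{eq:psi-A-B} the factorization $\psi(iy) = -2i\sin(y/2)e^{iy/2}\bigl(Pe^{i\alpha\pi/2}+Qe^{-i\alpha\pi/2}\bigr)$ with $P,Q>0$, which pins $\operatorname{Arg}\psi(iy)$ into the sector $\bigl(-(1+\alpha)\pi/2,\ \alpha\pi/2\bigr]$, strictly interior to $(-\pi,\pi)$ uniformly in $y$; combined with the elementary identity $\snm{1+z}^2=(1+\snm{z})^2-4\snm{z}\sin^2(\operatorname{Arg}z/2)$ and $4\snm{z}\leqslant(1+\snm{z})^2$ this gives $\snm{1+\mu\psi(iy)}\geqslant\sin\bigl((1-\alpha)\pi/4\bigr)\bigl(1+\mu\snm{\psi(iy)}\bigr)$ for every $\mu>0$ at once, and the lower bound $\snm{\psi(iy)}\geqslant (2\cos(\alpha\pi/2)/\pi)\,y^{-1-\alpha}$ (keeping only the $k=0$ term and using $\sin(y/2)\geqslant y/\pi$) finishes the proof. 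All steps check out: the sector addition involves no branch wrap-around, and the constants are explicit. Your argument is shorter, avoids both the case split in $\mu$ and any appeal to \cref{lem:1+mupsi} or compactness, and in fact proves a slightly stronger sectorial statement about $\psi$ on the imaginary axis than the paper records; the paper's route, on the other hand, reuses machinery (\cref{eq:psi-singu}, \cref{lem:1+mupsi}) that it needs elsewhere anyway.
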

\begin{proof}
	By \cref{eq:psi-singu,eq:psipi+,eq:psipi+2}, there exists $ 0 < y_\alpha < \pi $,
	depending only on $ \alpha $, such that
	\begin{align*}
		\operatorname{Re} \psi(iy) > C_\alpha y^{-1-\alpha}
		\quad\forall\, y_\alpha \leqslant y \leqslant \pi, \\
		\operatorname{Im} \psi(iy) < -C_\alpha y^{-1-\alpha}
		\quad\forall\, 0 < y \leqslant y_\alpha.
	\end{align*}
	It follows that
	\[
		\snm{1+\mu\psi(iy)} > C_\alpha \mu y^{-1-\alpha}
		\quad\forall\, 0 < y \leqslant \pi,
	\]
	and hence
	\begin{align*}
		\inf_{
			\substack{
				0 < y \leqslant \pi \\
				y^{1+\alpha} \leqslant \mu < \infty
			}
		} \frac{\snm{1+\mu\psi(iy)}}{1+\mu y^{-1-\alpha}} \geqslant
		\inf_{
			\substack{
				0 < y \leqslant \pi \\
				y^{1+\alpha} \leqslant \mu < \infty
			}
		} \frac{y^{1+\alpha}}{2\mu} \snm{1+\mu\psi(iy)} > C_\alpha.
	\end{align*}
	It remains therefore to prove
	\begin{equation}
		\label{eq:psi-growth-1}
		\inf_{
			\substack{
				0 < \mu \leqslant \pi^{1+\alpha} \\
				\mu^{1/(1+\alpha)} \leqslant y \leqslant \pi
			}
		} \frac{\snm{1+\mu\psi(iy)}}{1+\mu y^{-1-\alpha}}
		> C_\alpha.
	\end{equation}

	To this end, we proceed as follows. By \cref{eq:psi}, there exists a continuous
	function $ g $ on $ [0,\pi] $ such that $ g(0) = 0 $ and
	\[
		\psi(iy) = (iy)^{-1-\alpha} + y^{-1-\alpha} g(y),
		\quad 0 < y \leqslant \pi.
	\]
	A straightforward computation gives
	\begin{align*}
		& 2\snm{1+\mu\psi(iy)}^2 \\
		={} & 2\snm{
			1+\mu(iy)^{-1-\alpha} +
			\mu y^{-1-\alpha} g(y)
		}^2 \\
		\geqslant{} &
		\snm{1+\mu(iy)^{-1-\alpha}}^2 -
		2\mu^2 y^{-2-2\alpha} \snm{g(y)}^2 \\
		={} &
		1 + \mu^2 y^{-2-2\alpha} +
		2\mu y^{-1-\alpha} \cos((1+\alpha)\pi/2) -
		2\mu^2 y^{-2-2\alpha} \snm{g(y)}^2 \\
		={} &
		\Big( \mu y^{-1-\alpha} + \cos^2\big((1+\alpha)\pi/2\big) \Big) +
		\sin^2\big((1+\alpha)\pi/2\big) -
		2\mu^2 y^{-2-2\alpha}\snm{g(y)}^2 \\
		\geqslant{} &
		\sin^2\big((1+\alpha)\pi/2\big) -
		2\mu^2 y^{-2-2\alpha}\snm{g(y)}^2,
	\end{align*}
	so that, by the fact $ g(0) = 0 $, there exists $ 0 < y_\alpha < \pi $, depending
	only on $ \alpha $, such that
	\[
		\inf_{
			\substack{
				0 < \mu \leqslant y_\alpha^{1+\alpha} \\
				\mu^{1/(1+\alpha)} \leqslant y \leqslant y_\alpha
			}
		} \snm{1+\mu\psi(iy)} > C_\alpha.
	\]
	In addition, applying the extreme value theorem yields, by \cref{eq:1+mupsi}, that
	\begin{equation*}
		\inf_{
			\substack{
				0 \leqslant \mu \leqslant \pi^{1+\alpha} \\
				y_\alpha \leqslant y \leqslant \pi
			}
		} \snm{1+\mu\psi(iy)} > C_\alpha.
	\end{equation*}
	Using the above two estimates yields \cref{eq:psi-growth-1}, by the estimate
	\[
		\inf_{
			\substack{
				0 < \mu \leqslant \pi^{1+\alpha} \\
				\mu^{1/(1+\alpha)} \leqslant y \leqslant \pi
			}
		} \frac{\snm{1+\mu\psi(iy)}}{1+\mu y^{-1-\alpha}}
		\geqslant \frac12 \inf_{
			\substack{
				0 < \mu \leqslant \pi^{1+\alpha} \\
				\mu^{1/(1+\alpha)} \leqslant y \leqslant \pi
			}
		}
		\snm{1+\mu\psi(iy)}.
	\]
	This completes the proof.
\end{proof}

\begin{lem}
	\label{lem:g'}
	For any $ \mu > 0 $ and $ 0 < y \leqslant \pi $,
	\begin{equation}
		\label{eq:g'}
		\snm{g'(y)} < C_\alpha
		\frac{\mu y^{-2-\alpha}}{(1+\mu y^{-1-\alpha})^2},
	\end{equation}
	where $ g(y) := (1+\mu\psi(iy))^{-1} $.
\end{lem}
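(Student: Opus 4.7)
The plan is to apply the chain rule to $g(y) = (1+\mu\psi(iy))^{-1}$, which gives
\[
	g'(y) = -\frac{i\mu\,\psi'(iy)}{(1+\mu\psi(iy))^2},
	\qquad
	\snm{g'(y)} = \frac{\mu\,\snm{\psi'(iy)}}{\snm{1+\mu\psi(iy)}^2}.
\]
The denominator is controlled immediately by \cref{lem:psi-growth}, which supplies $\snm{1+\mu\psi(iy)}^2 \geqslant C_\alpha(1+\mu y^{-1-\alpha})^2$. Thus the problem reduces to proving the one-sided estimate $\snm{\psi'(iy)} \leqslant C_\alpha y^{-2-\alpha}$ for $0 < y \leqslant \pi$, independently of $\mu$.

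For the bound on $\psi'(iy)$, I would use the decomposition in \cref{eq:shit-7}, writing $\psi(z) = (e^z-1)z^{-2-\alpha} + h(z)$ where $h$ is holomorphic in the strip $\snm{\operatorname{Im} z} < 2\pi$. Differentiating yields
\[
	\psi'(z) = e^z z^{-2-\alpha} - (2+\alpha)(e^z-1) z^{-3-\alpha} + h'(z).
\]
Setting $z = iy$ with $0 < y \leqslant \pi$, the first term has modulus exactly $y^{-2-\alpha}$; the second is bounded by $C_\alpha\snm{e^{iy}-1}\,y^{-3-\alpha} \leqslant C_\alpha y \cdot y^{-3-\alpha} = C_\alpha y^{-2-\alpha}$ since $\snm{e^{iy}-1} \leqslant y$; and $h'(iy)$ is uniformly bounded on the compact segment $\{iy: 0 \leqslant y \leqslant \pi\}$, which lies in the strip of analyticity of $h$. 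Combining these three contributions gives $\snm{\psi'(iy)} \leqslant C_\alpha y^{-2-\alpha}$, and substituting back yields \cref{eq:g'}.

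The only mildly delicate point is the treatment of the principal branch of $z^{-2-\alpha}$ used in \cref{eq:shit-7}: since $iy$ stays on the positive imaginary axis, away from the branch cut $(-\infty,0]$, the differentiation is classical and $\snm{(iy)^{-2-\alpha}} = y^{-2-\alpha}$ with $\snm{(iy)^{-3-\alpha}} = y^{-3-\alpha}$. Apart from this bookkeeping, the argument is a direct chain-rule computation combined with \cref{lem:psi-growth} and the growth of $\psi'$ near the origin; no new complex-analytic input beyond what is already assembled in \cref{eq:psi,eq:shit-7} is required.
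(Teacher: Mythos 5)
Your proof is correct and is essentially the paper's own argument: the same chain-rule reduction, the same use of \cref{lem:psi-growth} for the denominator, and the same singular-plus-analytic splitting of $\psi$ to get $\snm{\psi'(iy)}\leqslant C_\alpha y^{-2-\alpha}$. Indeed, the paper's decomposition $\psi(iy)=F(y)+G(y)$ drawn from \cref{eq:psi-A-B} has $G(y)=(1-e^{iy})y^{-2-\alpha}(\cos(\alpha\pi/2)-i\sin(\alpha\pi/2))=(e^{iy}-1)(iy)^{-2-\alpha}$, which is exactly the singular term you extract via \cref{eq:shit-7}, so the two routes coincide.
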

\begin{proof}
	By \cref{eq:psi-A-B}, $ \psi(iy) $ can be expressed in the form
	\[
		\psi(iy) = F(y) + G(y), \quad 0 < y \leqslant \pi,
	\]
	where $ F $ is analytic on $ [0,\pi] $ and
	\[
		G(y) = (1-e^{iy}) y^{-2-\alpha}
		\big( \cos(\alpha\pi/2) - i \sin(\alpha\pi/2) \big).
	\]
	A direct calculation gives
	\[
		\snm{G'(y)} < C_\alpha y^{-2-\alpha},
		\quad 0 < y \leqslant \pi,
	\]
	so that
	\[
		\Snm{i\psi'(iy)} = \snm{F'(y) + G'(y)}
		< C_\alpha y^{-2-\alpha}, \quad 0 < y \leqslant \pi.
	\]
	In addition, \cref{lem:psi-growth} implies
	\[
		\snm{1+\mu\psi(iy)}^{-2} < C_\alpha(1+\mu y^{-1-\alpha})^{-2},
		\quad 0 < y \leqslant \pi.
	\]
	Therefore, \cref{eq:g'} follows from the equality
	\[
		g'(y) = \frac{i\mu \psi'(iy)}{(1+\mu\psi(iy))^2}.
	\]
	This completes the proof.
\end{proof}

In the next two subsections, we use $ \theta $ to abbreviate $ \theta_\alpha $,
defined in \cref{lem:1+mupsi}, define
\[
	\Upsilon := (\infty,0]e^{-i\theta} \cup [0,\infty)e^{i\theta},
\]
and let $ \Upsilon $ be oriented so that $ \operatorname{Im} z $ increases along $
\Upsilon $. In addition, $ \Upsilon_1 := \{z \in \Upsilon:\ \snm{\operatorname{Im} z}
\leqslant \pi\} $ and it inherits the orientation of $ \Upsilon $.

\subsection{The first fractional ordinary equation}
\label{ssec:first_ode}
This subsection considers the fractional ordinary equation
\begin{equation}
	\label{eq:ode-y}
	\xi'(t) + \lambda \D_{0+}^{-\alpha} \xi(t) = 0,
	\quad t > 0,
\end{equation}
subjected to the initial value condition $ \xi(0) = \xi_0 $, where $ \lambda $ is a
positive constant and $ \xi_0 \in \mathbb R $. By \cite[(2.1)]{Lubich1996}, the
solution $ \xi $ of equation \cref{eq:ode-y} is expressed by a contour integral
\begin{equation}
	\label{eq:y}
	\xi(t) = \frac{\xi_0}{2\pi i} \int_\Upsilon
	e^{tz} z^\alpha(z^{1+\alpha}+\lambda)^{-1} \, \mathrm{d}z,
	\quad t > 0.
\end{equation}
Applying the temporal discretization used in \cref{eq:numer_sol} to equation
\cref{eq:ode-y} yields the following discretization: let $ Y_0 = \xi_0 $; for $ k \in
\mathbb N $, the value of $ Y_{k+1} $ is determined by that
\[
	\mu \Big(
		\sum_{j=1}^k Y_j \big(
			b_{k-j+2} - 2b_{k-j+1} + b_{k-j}
		\big) + b_1 Y_{k+1}
	\Big) + Y_{k+1} - Y_k = 0,
\]
where $ \mu := \lambda \tau^{1+\alpha} $ and $ b_j := j^{1+\alpha}/\Gamma(2+\alpha)
$, $ j \in \mathbb N $.
\begin{thm}
	\label{thm:Y-jump}
	For any $ k \in \mathbb N_{>0} $ we have
	\begin{equation}
		\label{eq:Y-jump}
		\snm{Y_{k+1} - Y_k} \leqslant C_\alpha k^{-1} \snm{\xi_0}.
	\end{equation}
\end{thm}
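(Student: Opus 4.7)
My plan is to represent $Y_{k+1}-Y_k$ as a discrete Fourier integral whose symbol is built from $\psi$, and then extract the factor $1/k$ by a single integration by parts, controlling the resulting integrand with \cref{lem:g'}.

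The first step is a generating-function reduction. With $A(z):=\sum_{k=0}^\infty Y_k z^k$, the definition of $\psi$ gives $\sum_{k=1}^\infty b_k z^k = z\psi(-\log z)/(1-z)$, from which a short computation yields the key identity $z\sum_{m=0}^\infty (b_{m+2}-2b_{m+1}+b_m) z^m + b_1 = (1-z)\psi(-\log z)$. Multiplying the scheme by $z^{k+1}$, summing over $k\geq 0$, and applying the Cauchy product to the convolution sum, this identity collapses the convolution term together with the explicit $\mu b_1 Y_{k+1}$ contribution into a single term involving $\psi$. Solving for the jump generating function then gives
\begin{equation*}
	\sum_{k=0}^\infty (Y_{k+1}-Y_k)\, z^k = \frac{-Y_0\,\mu\psi(-\log z)}{1+\mu\psi(-\log z)}.
\end{equation*}

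Next I set $z=e^{i\theta}$ with $\theta\in(-\pi,\pi)$, which is admissible because \cref{lem:1+mupsi} precludes zeros of $1+\mu\psi$ on the imaginary axis. With $g(\theta):=(1+\mu\psi(i\theta))^{-1}$, Fourier inversion, combined with the identity $-\mu\psi/(1+\mu\psi)=g-1$ and $\int_{-\pi}^\pi e^{ik\theta}\,d\theta=0$ for $k\geq 1$, yields
\begin{equation*}
	Y_{k+1}-Y_k = \frac{Y_0}{2\pi}\int_{-\pi}^\pi e^{ik\theta} g(\theta)\,d\theta.
\end{equation*}
\cref{lem:psi-growth} forces $g(0^\pm)=0$, and inspection of \cref{eq:psi-A-B} shows $B(\pi)=0$, so $\psi(i\pi)\in\mathbb{R}$ and $g(\pi)=g(-\pi)$. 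Integration by parts on $(-\pi,0)\cup(0,\pi)$ therefore produces no boundary contributions, giving $\snm{Y_{k+1}-Y_k}\leqslant \snm{Y_0}(2\pi k)^{-1}\int_{-\pi}^\pi\snm{g'(\theta)}\,d\theta$.

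Finally, the symmetry $\snm{g'(-\theta)}=\snm{g'(\theta)}$ that follows from $g(-\theta)=\overline{g(\theta)}$, combined with \cref{eq:g'}, bounds the remaining integral by $2C_\alpha\int_0^\pi \mu\theta^{-2-\alpha}(1+\mu\theta^{-1-\alpha})^{-2}\,d\theta$. The change of variable $s=\mu\theta^{-1-\alpha}$ converts this into $\frac{2C_\alpha}{1+\alpha}\int_{\mu\pi^{-1-\alpha}}^\infty (1+s)^{-2}\,ds\leqslant C_\alpha$, yielding the stated estimate. The main obstacle is the algebraic identity in the first step: one must recognize that the explicit $\mu b_1 Y_{k+1}$ term in the scheme is exactly the counterterm needed to cancel the $-b_1 z^{-1}$ piece arising from the generating function of the coefficients $b_{m+2}-2b_{m+1}+b_m$, so that the clean symbol $-\mu\psi/(1+\mu\psi)$ appears in the final representation; once this is in hand, \cref{lem:1+mupsi,lem:psi-growth,lem:g'} finish the argument almost mechanically.
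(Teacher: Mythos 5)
Your proof is correct, and its skeleton coincides with the paper's: both routes arrive at the representation $Y_{k+1}-Y_k=\frac{\xi_0}{\pi}\operatorname{Re}\int_0^\pi e^{iky}g(y)\,\mathrm{d}y$ with $g=(1+\mu\psi(i\cdot))^{-1}$ (your generating-function computation is the paper's discrete Laplace transform in the variable $z=e^{-w}$, and your identity $zC(z)+b_1=(1-z)\psi(-\log z)$ for $C(z)=\sum_m(b_{m+2}-2b_{m+1}+b_m)z^m$ checks out), and both close the argument with the $L^1$ bound $\int_0^\pi\snm{g'}\lesssim 1$ supplied by \cref{lem:g'}. Where you genuinely diverge is the mechanism for extracting the factor $k^{-1}$ from the oscillatory integral: the paper subdivides $[0,\pi]$ into $k$ pieces, subtracts $g$ at the left endpoints using $\int_{(j-1)\pi/k}^{j\pi/k}\cos(ky)\,\mathrm{d}y=0$, and must treat the sine part separately with an even/odd case split and an extra appeal to \cref{lem:psi-growth} near $y=\pi$; you instead integrate by parts once on $(-\pi,0)\cup(0,\pi)$, which handles real and imaginary parts simultaneously but obliges you to check the boundary terms. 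Your verifications are sound: $g(0^\pm)=0$ follows from \cref{lem:psi-growth} together with the conjugate symmetry \cref{eq:psi-conj}, and $\psi(i\pi)\in\mathbb R$ (hence $g(\pi)=g(-\pi)$) follows from $B(\pi)=0$ in \cref{eq:psi-A-B}, while the integrability of $g'$ near $0$ that legitimizes the integration by parts is exactly the content of \cref{lem:g'}; your route is arguably the cleaner of the two. The one point you pass over quickly is the passage from the power series, convergent only for $\snm{z}<1$, to its boundary values on the unit circle; this requires the same limiting argument the paper carries out when it lets $a\to0+$ in the inversion integral and applies dominated convergence, and it goes through here because $g$ is bounded on $[-\pi,\pi]$ by \cref{lem:psi-growth}.
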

\begin{thm}
	\label{thm:y-Y}
	For any $ k \in \mathbb N_{>0} $ we have
	\begin{align}
		\snm{\xi(t_k) - Y_k} &\leqslant C_\alpha k^{-1} \snm{\xi_0}
		\label{eq:y-Y}.
	\end{align}
\end{thm}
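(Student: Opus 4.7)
The plan is to derive a contour integral representation for $Y_k$ that mirrors the representation \eqref{eq:y} of $\xi(t_k)$, subtract the two, and estimate the resulting contour integral using the properties of $\psi$ collected in \cref{lem:1+mupsi,lem:psi-growth,lem:g'}. The whole point of the auxiliary function $\psi$ is that it plays in the discrete setting exactly the role played by $z^{-1-\alpha}$ in the continuous setting, so one expects the two kernels to match sufficiently well.

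First I would turn the recurrence for $\{Y_k\}$ into a discrete Laplace transform identity. Since the convolution coefficients are the second-order differences of $b_k = k^{1+\alpha}/\Gamma(2+\alpha)$ (with $b_1$ singled out for the last term), and since $\psi$ was tailored by \eqref{eq:def-psi} to be precisely the symbol of this convolution under $\zeta = e^{-z}$, a Bromwich-type inversion produces a representation of the form
\[
Y_k \;=\; \frac{\xi_0}{2\pi i} \int_{\Upsilon_1} \frac{e^{kz}}{(1-e^{-z})(1+\mu\psi(z))} \, dz,
\]
where $\Upsilon_1$ is the truncated Hankel contour introduced just above this subsection. Rescaling $z \mapsto z/\tau$ in \eqref{eq:y} also yields
\[
\xi(t_k) \;=\; \frac{\xi_0}{2\pi i} \int_{\Upsilon} \frac{e^{kz}}{z(1+\mu z^{-1-\alpha})} \, dz.
\]

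Second, I would write $\xi(t_k) - Y_k$ as a contour integral of the kernel difference on $\Upsilon_1$, plus a tail on $\Upsilon \setminus \Upsilon_1$. The tail decays like $e^{-ck}$ because $\operatorname{Re}(kz) \leqslant -ck$ on the outer slanted rays (thanks to $\theta_\alpha > \pi/2$ from \cref{lem:1+mupsi}), so it is much better than $k^{-1}$. For the main piece over $\Upsilon_1$, I would integrate by parts with respect to the natural arclength parameter, using the antiderivative $e^{kz}/k$ to extract the $k^{-1}$ factor. The endpoint contributions at $\operatorname{Im} z = \pm \pi$ are harmless and can be absorbed in the tail estimate, while the indentation around $z = 0$ is controlled using \eqref{eq:psi-singu}, which guarantees that $(1+\mu\psi(z))^{-1}$ and $(1+\mu z^{-1-\alpha})^{-1}$ cancel to leading order near the origin.

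Third, after integration by parts, the integrand is essentially the derivative of $(1+\mu\psi(z))^{-1} - (1+\mu z^{-1-\alpha})^{-1}$. The key point is that \cref{lem:g'} supplies the pointwise bound $|g'(y)| \leqslant C_\alpha \mu y^{-2-\alpha}(1+\mu y^{-1-\alpha})^{-2}$ on the imaginary piece of $\Upsilon_1$, and the same scaling argument used in the proof of \cref{lem:psi-growth} (splitting at $y = \mu^{1/(1+\alpha)}$) shows that this bound integrates to a constant independent of $\mu$. The derivative of the continuous factor $(1+\mu z^{-1-\alpha})^{-1}$ is estimated directly. Combining everything gives $|\xi(t_k) - Y_k| \leqslant C_\alpha k^{-1} |\xi_0|$.

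The main obstacle will be making the cancellation near $z=0$ quantitative: each kernel individually blows up there, but their difference stays integrable. Extracting this solely from the asymptotic \eqref{eq:psi-singu} (without a full Taylor expansion of $\psi$, which is not provided) requires careful choice of the indentation and attention to the endpoint terms from integration by parts. The contour deformation from a vertical Bromwich line to $\Upsilon_1$ itself is routine, since \cref{lem:1+mupsi} rules out any zeros of $1+\mu\psi(z)$ in the relevant sector.
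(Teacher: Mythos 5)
Your first two steps coincide with the paper's proof: the same two contour representations (the paper rescales \cref{eq:y} by $\eta=\tau z$ and uses \cref{eq:Y_k-3} for $Y_k$; note that your kernel $e^{kz}\big((1-e^{-z})(1+\mu\psi(z))\big)^{-1}$ carries a spurious extra factor $e^z$ compared with $e^{kz}\big((e^{z}-1)(1+\mu\psi(z))\big)^{-1}$, harmless on $\Upsilon_1$ but worth fixing), and the same splitting into a tail over $\Upsilon\setminus\Upsilon_1$ plus a main term over $\Upsilon_1$. Your tail estimate is also the paper's.

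The gap is in your treatment of the main term. Integration by parts is neither needed nor adequately supported here. Since $\theta_\alpha>\pi/2$ one has $\cos\theta<0$, so $\int_0^{\pi/\sin\theta}e^{kr\cos\theta}\,\mathrm{d}r\leqslant C_\alpha k^{-1}$ already supplies the factor $k^{-1}$; all that is then required is that the kernel difference $(z+\mu z^{-\alpha})^{-1}-(1+\mu\psi(z))^{-1}(e^z-1)^{-1}$ be bounded on $\Upsilon_1\setminus\{0\}$ \emph{uniformly in $\mu$}. The paper proves exactly this from four elementary bounds, namely $\snm{z+\mu z^{-\alpha}}>C_\alpha\snm{z}$, $\snm{e^z-1}>C_\alpha\snm{z}$, $\snm{1+\mu\psi(z)}>C_\alpha(1+\mu\snm{z}^{-1-\alpha})$, and $\snm{(1+\mu\psi(z))(e^z-1)-(z+\mu z^{-\alpha})}\leqslant C_\alpha(\snm{z}^2+\mu\snm{z}^{1-\alpha})$, which combine to give the uniform bound $C_\alpha(\snm{z}^2+\mu\snm{z}^{1-\alpha})/\big(\snm{z}^2(1+\mu\snm{z}^{-1-\alpha})\big)=C_\alpha$. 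Your route instead needs an integrable, $\mu$-uniform bound on the \emph{derivative} of the kernel difference along the tilted rays, which you do not establish: \cref{lem:g'} only controls the derivative of $(1+\mu\psi(iy))^{-1}$ on the imaginary segment (it is the tool for \cref{thm:Y-jump}, whose contour is $[-i\pi,i\pi]$ as in \cref{eq:Y_k}, not $\Upsilon_1$), and it says nothing about the factor $(e^z-1)^{-1}$ or about $z$ on the rays $\operatorname{Arg}z=\pm\theta$. Also, the difficulty near $z=0$ is not that each kernel blows up --- for fixed $\mu>0$ both kernels vanish like $\mu^{-1}\snm{z}^{\alpha}$ by \cref{eq:psi-singu} --- but that the estimate must hold uniformly as $\mu\to0$, a point your sketch does not isolate. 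Replacing your third step by the uniform boundedness argument above closes the proof.
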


The main task of the rest of this subsection is to prove the above two theorems by
the well-known Laplace transform method (the basic idea comes from
\cite{Lubich1996,McLean2015Time,Jin2015}). We introduce the discrete Laplace
transform of $ (Y_k)_{k=0}^\infty $ by that
\begin{equation}
	\label{eq:Y-laplace}
	\widehat Y(z) := \sum_{k=0}^\infty Y_k e^{-kz}
	\quad \forall z \in H,
\end{equation}
where $ H := \{x+iy: 0 < x \leqslant \delta_{\alpha,\mu},\, -\pi \leqslant y
\leqslant \pi \} $, with $ \delta_{\alpha,\mu} $ being defined in \cref{lem:1+mupsi}.
By the definition of the sequence $ (Y_k)_{k=0}^\infty $, a straightforward
computation gives
\begin{align*}
	\mu (\widehat Y(z)-\xi_0) (e^z-1)^2\, \widehat b(z) +
	(\widehat Y(z) - \xi_0) e^z - \widehat Y(z) = 0,
	\quad z \in H,
\end{align*}
where $ \widehat b $ is the discrete transform of the sequence $ (b_k)_{k=0}^\infty
$, namely,
\[
	\widehat b(z) = \sum_{k=1}^\infty
	\frac{k^{1+\alpha}}{\Gamma(2+\alpha)} e^{-kz}.
\]
For any $ z \in H $, combining like terms yields
\[
	(e^z-1 + \mu(e^z-1)^2 \widehat b(z)) \widehat Y(z) -
	\big( e^z + \mu(e^z-1)^2\,\widehat b(z) \big) \xi_0 = 0,
\]
so that
\begin{align*}
	\widehat Y(z) &=
	\frac{ e^z + \mu(e^z-1)^2\, \widehat b(z) }{
		e^z-1 + \mu(e^z-1)^2\,\widehat b
	} \, \xi_0 \\
	&= \Big(
		1 + \frac1{e^z-1 + \mu(e^z-1)^2\,\widehat b}
	\Big) \xi_0 \\
	&= \Big(
		1 + \frac{(e^z-1)^{-1}}{1+\mu\psi(z)}
	\Big) \xi_0,
\end{align*}
by \cref{eq:def-psi,lem:1+mupsi}. Therefore, a routine calculation
(cf.~\cite[(28)]{McLean2015Time}) yields that, for any $ 0 < a \leqslant
\delta_{\alpha,\mu} $ and $ k \in \mathbb N_{>0} $,
\begin{align*}
	Y_k & = \frac{\xi_0}{2\pi i} \int_{a-i\pi}^{a+i\pi}
	\widehat Y(z) e^{kz} \, \mathrm{d}z =
	\frac{\xi_0}{2\pi i} \int_{a-i\pi}^{a+i\pi}
	\frac{e^{kz}}{1+\mu\psi(z)} \frac{\mathrm{d}z}{e^z-1}.
\end{align*}
By \cref{eq:psi-singu,eq:1+mupsi}, letting $ a \to {0+} $ and applying Lebesgue's
dominated convergence theorem then yields
\begin{equation}
	\label{eq:Y_k}
	Y_k = \frac{\xi_0}{2\pi i}\,
	\int_{-i\pi}^{i\pi} \frac{e^{kz}}{1+\mu\psi(z)}
	\, \frac{\mathrm{d}z}{e^z-1}.
\end{equation}

By \cref{eq:1+mupsi} we have that the integrand in \cref{eq:Y_k} is analytic on
\[
	\omega := \left\{
		z \in \mathbb C:\
		0 < \snm{\operatorname{Im} z} < \pi,\
		\pi/2 < \snm{\operatorname{Arg} z} < \theta
	\right\},
\]
this integrand is continuous on $ \partial\omega \setminus \{0\} $, and
\cref{eq:psi-singu} implies that
\[
	\lim_{\omega \ni z \to 0} \snm{z}^{-\alpha}
	\snm{e^{kz}(1+\mu\psi(z))^{-1}(e^z-1)^{-1}} = \mu^{-1}.
\]
Additionally,
\[
	\frac{e^{kz}}{(1+\mu\psi(z))(e^z-1)} =
	\frac{e^{k(z+2\pi i)}}{(1+\mu\psi(z+2\pi i))(e^{z+2\pi i}-1)}
\]
for all $ z = x-i\pi $, $ -\pi \tan\theta \leqslant x \leqslant 0 $. Therefore, an
elementary calculation yields
\begin{equation}
	\label{eq:Y_k-3}
	Y_k = \frac{\xi_0}{2\pi i} \int_{\Upsilon_1}
	\frac{e^{kz}}{1+\mu\psi(z)} \frac{\mathrm{d}z}{e^z-1},
\end{equation}
by \cref{eq:Y_k} and Cauchy's integral theorem.

\begin{rem}
	By the techniques used in the proof of \cref{thm:stab}, it is easy to obtain that $
	\snm{Y_k} \leqslant \snm{\xi_0} $ for all $ k \in \mathbb N_{>0} $. Therefore, the
	series in \cref{eq:Y-laplace} converge absolutely for all $ z \in H $.
\end{rem}

Finally, we present the proofs of \cref{thm:Y-jump,thm:y-Y} as follows.

\medskip\noindent{\bf Proof of \cref{thm:Y-jump}.} Firstly, let us prove
	\begin{equation}
		\label{eq:cos-g}
		\Big\lvert \int_0^\pi \cos(ky) g(y) \, \mathrm{d}y \Big\rvert
		\leqslant C_\alpha k^{-1},
	\end{equation}
	where $ g(y) := (1+\mu\psi(iy))^{-1} $, $ 0 < y \leqslant \pi $. A straightforward
	computation gives
	\begin{align*}
		\int_0^\pi \cos(ky) g(y) \, \mathrm{d}y &=
		\sum_{j=1}^k \int_{(j-1)\pi/k}^{j\pi/k}
		\cos(ky) g(y) \, \mathrm{d}y \\
		&= \sum_{j=1}^k \int_{(j-1)\pi/k}^{j\pi/k}
		\cos(ky) \big( g(y)-g((j-1)\pi/k) \big) \, \mathrm{d}y \\
		&= \sum_{j=1}^k \int_{(j-1)\pi/k}^{j\pi/k}
		\cos(ky) \int_{(j-1)\pi/k}^y g'(s) \, \mathrm{d}s \, \mathrm{d}y.
	\end{align*}
	It follows that
	\begin{small}
	\begin{align*}
		& \Snm{\int_0^\pi \cos(ky) g(y) \, \mathrm{d}y}
		\leqslant \sum_{j=1}^k \int_{(j-1)\pi/k}^{j\pi/k}
		\int_{(j-1)\pi/k}^y \snm{g'(s)} \, \mathrm{d}s \, \mathrm{d}y \\
		<{} & \pi k^{-1} \int_0^\pi \snm{g'(y)} \, \mathrm{d}y
		< C_\alpha k^{-1} \int_0^\pi
		\frac{\mu y^{-2-\alpha}}{(1+\mu y^{-1-\alpha})^2} \, \mathrm{d}y
		\quad\text{(by \cref{lem:g'})} \\
		<{} & C_\alpha k^{-1} \bigg(
			\int_0^{\mu^{1/(1+\alpha)}}
			\frac{\mu y^{-2-\alpha}}{(1+\mu y^{-1-\alpha})^2} \, \mathrm{d}y +
			\int_{\mu^{1/(1+\alpha)}}^{\max\{\mu^{1/(1+\alpha)}, \pi\}}
			\frac{\mu y^{-2-\alpha}}{(1+\mu y^{-1-\alpha})^2} \, \mathrm{d}y
		\bigg) \\
		<{} &
		C_\alpha k^{-1} \bigg(
			\int_0^{\mu^{1/(1+\alpha)}}
			\mu^{-1} y^\alpha \, \mathrm{d}y +
			\int_{\mu^{1/(1+\alpha)}}^{\max\{\mu^{1/(1+\alpha)},\pi\}}
			\mu y^{-2-\alpha} \, \mathrm{d}y
		\bigg) < C_\alpha k^{-1},
	\end{align*}
	\end{small}
	which proves \cref{eq:cos-g}.

	Secondly, let us prove
	\begin{equation}
		\label{eq:sin-g}
		\Big\lvert \int_0^\pi \sin(ky) g(y) \, \mathrm{d}y \Big\rvert
		< C_\alpha k^{-1}.
	\end{equation}
	If $ k = 1 + 2m $, $ m \in \mathbb N $, then a similar argument as that to derive
	\cref{eq:cos-g} yields
	\begin{align*}
		\Big\lvert \int_0^{2m\pi/(1+2m)} \sin(ky) g(y) \, \mathrm{d}y \Big\rvert
		< C_\alpha k^{-1},
	\end{align*}
	and hence \cref{eq:sin-g} follows from the estimate
	\begin{align*}
		\Big\lvert \int_{2m\pi/(1+2m)}^\pi \sin(ky) g(y) \, \mathrm{d}y \Big\rvert
		< C_\alpha k^{-1},
	\end{align*}
	which is evident by \cref{lem:psi-growth}. If $ k = 2m $, $ m \in \mathbb N_{>0} $,
	then a simple modification of the above analysis proves that \cref{eq:sin-g} still
	holds.

	Finally, combining \cref{eq:cos-g,eq:sin-g} yields
	\[
		\Big\lvert \int_0^\pi e^{iky} g(y) dy \Big\rvert
		\leqslant C_\alpha k^{-1},
	\]
	so that
	\[
		\Bigl\lvert \operatorname{Re} \int_0^\pi e^{iky} g(y) \, \mathrm{d}y \Big\rvert
		\leqslant C_\alpha k^{-1}.
	\]
	Therefore, \cref{eq:Y-jump} follows from
	\[
		Y_{k+1} - Y_k = \frac{\xi_0}\pi\,
		\operatorname{Re} \int_0^\pi e^{iky}g(y) \, \mathrm{d}y,
	\]
	which is evident by \cref{eq:psi-conj,eq:Y_k}. This concludes the proof of
	\cref{thm:Y-jump}.
\hfill\ensuremath{\blacksquare}

\medskip\noindent{\bf Proof of \cref{thm:y-Y}.} Substituting $ \eta:= \tau z $ into
	\cref{eq:y} yields
	\[
		\xi(t_k) = \frac{\xi_0}{2\pi i} \int_\Upsilon
		e^{k\eta}(\eta + \mu\eta^{-\alpha})^{-1}  \, \mathrm{d}\eta,
	\]
	and then subtracting \cref{eq:Y_k-3} from this equation gives
	\begin{equation}
		\label{eq:y_k-Y_k}
		\xi(t_k) - Y_k = \mathbb I_1 + \mathbb I_2,
	\end{equation}
	where
	\begin{align*}
		\mathbb I_1 &:= \frac{\xi_0}{2\pi i}
		\int_{\Upsilon \setminus \Upsilon_1} e^{kz}
		(z+\mu z^{-\alpha})^{-1} \, \mathrm{d}z, \\
		\mathbb I_2 &:=
		\frac{\xi_0}{2\pi i} \int_{\Upsilon_1} e^{kz} \Big(
			(z + \mu z^{-\alpha})^{-1} -
			(1+\mu\psi(z))^{-1} (e^z-1)^{-1}
		\Big) \, \mathrm{d}z.
	\end{align*}
	Since $\mathbb{I}_1$ is a real number, a simple calculation gives
	\begin{align*}
		\mathbb I_1 &=
		\frac{\xi_0}\pi
		\operatorname{Im} \int_{\pi/\sin\theta}^\infty
		e^{kre^{i\theta}} (re^{i\theta} + \mu (re^{i\theta})^{-\alpha})^{-1}
		e^{i\theta} \,\mathrm{d}r \\
		&= \frac{\xi_0}\pi
		\operatorname{Im} \int_{\pi/\sin\theta}^\infty
		e^{kre^{i\theta}} \frac{(re^{i\theta})^\alpha}{
			(re^{i\theta})^{1+\alpha} + \mu
		} e^{i\theta} \,\mathrm{d}r,
	\end{align*}
	and the fact $ \pi/2 < \theta < (\alpha+3)/(4\alpha+4)\pi $ implies
	\[
		\Snm{\frac{(re^{i\theta})^\alpha}{(re^{i\theta})^{1+\alpha}+\mu}} =
		\frac{r^\alpha}{
			\snm{
				r^{1+\alpha}\cos((1+\alpha)\theta) +
				\mu + i r^{1+\alpha}\sin((1+\alpha)\theta)
			}
		} < C_\alpha r^{-1}.
	\]
	Consequently,
	\begin{align*}
		\snm{\mathbb I_1}
		& \leqslant C_\alpha \snm{\xi_0} \int_{\pi/\sin\theta}^\infty
		e^{kr\cos\theta} r^{-1} \, \mathrm{d}r
		\leqslant C_\alpha \snm{\xi_0} \int_{\pi/\sin\theta}^\infty
		e^{kr\cos\theta} \, \mathrm{d}r \\
		& \leqslant C_\alpha k^{-1} e^{k\pi\cot\theta} \snm{\xi_0}.
	\end{align*}

	Then let us estimate $ \mathbb I_2 $. For any $ z \in \Upsilon_1 \setminus \{0\} $,
	since
	\begin{align*}
		& z + \mu z^{-\alpha} = z^{-\alpha}(z^{1+\alpha} + \mu) \\
		={} &
		\snm{z}^{-\alpha} e^{-i\alpha\theta}
		\Big(
			\snm{z}^{1+\alpha} \cos\big((1+\alpha)\theta\big) + \mu +
			i \snm{z}^{1+\alpha} \sin\big((1+\alpha)\theta\big)
		\Big),
	\end{align*}
	from the fact $ \pi/2 < \theta < (\alpha+3)/(4\alpha+4)\pi $ it follows that
	\[
		\snm{z+\mu z^{-\alpha}} > C_\alpha \snm{z}.
	\]
	By \cref{eq:psi}, a routine calculation gives
	\[
		\snm{(1+\mu\psi(z))(e^z-1) - (z+\mu z^{-\alpha})}
		\leqslant C_\alpha\big( \snm{z}^2 + \mu \snm{z}^{1-\alpha} \big),
	\]
	and, similar to \cref{eq:psi-growth}, we have
	\[
		\snm{1+\mu\psi(z)} > C_\alpha (1+\mu \snm{z}^{-1-\alpha}).
	\]
	In addition, it is clear that
	\[
		\snm{e^z-1} > C_\alpha \snm{z}, \quad z \in \Upsilon_1 \setminus \{0\}.
	\]
	Using the above four estimates, we obtain
	\begin{align*}
		& \snm{
			(z+\mu z^{-\alpha})^{-1} - (1+\mu\psi(z))^{-1}(e^z-1)^{-1}
		} \\
		={} &
		\Snm{
			\frac{
				(1+\mu\psi(z))(e^z-1) - (z+\mu z^{-\alpha})
			}{
				(z+\mu z^{-\alpha})(1+\mu\psi(z))(e^z-1)
			}
		} \\
		<{} &
		C_\alpha \frac{
			\snm{z}^2 + \mu \snm{z}^{1-\alpha}
		}{
			\snm{z}^2(1+\mu\snm{z}^{-1-\alpha})
		} = C_\alpha
	\end{align*}
	for all $ z \in \Upsilon_1 \setminus \{0\} $. Therefore,
	\begin{small}
	\begin{align*}
		\snm{\mathbb I_2} &= \Snm{
			\frac{\xi_0}\pi \operatorname{Im} \int_0^{\pi\!/\!\sin\theta}
			\! e^{kre^{i\theta}} \! \Big(
				\big( re^{i\theta} \!+\! \mu (re^{i\theta})^{-\alpha} \big)^{-1}
				\!-\!  \big(1\!+\!\mu\psi(re^{i\theta})\big)^{-1}
				\big( e^{re^{i\theta}} \!-\! 1 \big)^{-1}
			\Big) e^{i\theta} \mathrm{d}r
		} \\
		& \leqslant C_\alpha \snm{\xi_0} \int_0^{\pi/\sin\theta}
		e^{kr\cos\theta} \, \mathrm{d}r
		\leqslant C_\alpha k^{-1} \snm{\xi_0}.
	\end{align*}
	\end{small}

	Finally, combing \cref{eq:y_k-Y_k} and the above estimates for $ \mathbb I_1 $ and
	$ \mathbb I_2 $ proves \cref{eq:y-Y} and thus concludes the proof.
\hfill\ensuremath{\blacksquare}

\subsection{The second fractional ordinary equation}
This subsection considers the fractional ordinary equation
\begin{equation}
	\xi'(t) + \lambda \D_{0+}^{-\alpha} \xi(t) = 1, \quad t > 0,
\end{equation}
subjected to the initial value condition $ \xi(0) = 0 $. Applying the temporal
discretization in \cref{eq:numer_sol} yields the following discretization: let $ Y_0
= 0 $; for $ k \in \mathbb N $, the value of $ Y_{k+1} $ is determined by that
\begin{equation}
	\mu \bigg(
		\sum_{j=1}^k Y_j(b_{k-j+2} - 2b_{k-j+1} + b_{k-j}) +
		b_1 Y_{k+1}
	\bigg)  + Y_{k+1} - Y_k = \tau.
\end{equation}
Similar to \cref{eq:y,eq:Y_k-3}, we have
\begin{align}
	\xi(t) &= \frac1{2\pi i}
	\int_{\Upsilon} e^{tz} (z^2+\lambda z^{1-\alpha})^{-1} \, \mathrm{d}z,
	\quad t > 0, \label{eq:y2} \\
	Y_k &= \frac\tau{2\pi i} \int_{\Upsilon_1}
	\frac{e^{kz+z}}{1+\mu\psi(z)} \frac{\mathrm{d}z}{(e^z-1)^2},
	\quad k \in \mathbb N_{>0}.
	\label{eq:Z_k}
\end{align}

\begin{thm}
	\label{thm:z-Z}
	For any $ k \in \mathbb N_{>0} $,
	\begin{equation}
		\label{eq:z-Z}
		\snm{\xi(t_k) - Y_k} < C_\alpha \tau.
	\end{equation}
\end{thm}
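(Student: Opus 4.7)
The plan is to mirror the proof of \cref{thm:y-Y}. First I would substitute $\eta = \tau z$ in \cref{eq:y2} to rewrite
\[
	\xi(t_k) = \frac{\tau}{2\pi i}\int_\Upsilon e^{kz}(z^2 + \mu z^{1-\alpha})^{-1}\,\mathrm{d}z,
\]
with $\mu = \lambda\tau^{1+\alpha}$, and subtracting \cref{eq:Z_k} would yield the decomposition $\xi(t_k) - Y_k = \mathbb{I}_1 + \mathbb{I}_2$, where
\begin{align*}
	\mathbb{I}_1 &:= \frac{\tau}{2\pi i}\int_{\Upsilon \setminus \Upsilon_1}
	e^{kz}(z^2+\mu z^{1-\alpha})^{-1}\,\mathrm{d}z, \\
	\mathbb{I}_2 &:= \frac{\tau}{2\pi i}\int_{\Upsilon_1} e^{kz}
	\left[(z^2+\mu z^{1-\alpha})^{-1} - \frac{e^z}{(1+\mu\psi(z))(e^z-1)^2}\right]\mathrm{d}z.
\end{align*}

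For $\mathbb{I}_1$ I would reuse the tail estimate from the proof of \cref{thm:y-Y}: the choice $\pi/2 < \theta < (\alpha+3)/(4\alpha+4)\pi$ gives $|z^{1+\alpha}+\mu| \geq C_\alpha |z|^{1+\alpha}$ on $\Upsilon$, hence $|z^2 + \mu z^{1-\alpha}|^{-1} \leq C_\alpha|z|^{-2}$. Combining this with $|e^{kz}| = e^{kr\cos\theta} \leq 1$ (since $\cos\theta<0$) and $\int_{\pi/\sin\theta}^\infty r^{-2}\,\mathrm{d}r < \infty$ produces $|\mathbb{I}_1| \leq C_\alpha \tau$, uniformly in $k$.

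To estimate $\mathbb{I}_2$, I would set $A(z):=z^2+\mu z^{1-\alpha}$ and $B(z):=(1+\mu\psi(z))(e^z-1)^2 e^{-z}$, so that the bracketed factor equals $(B-A)/(AB)$. Arguments analogous to \cref{lem:psi-growth} and to the proof of \cref{thm:y-Y} give $|A|,|B| \geq C_\alpha(|z|^2+\mu|z|^{1-\alpha})$ on $\Upsilon_1$, hence $|AB| \geq C_\alpha(|z|^2+\mu|z|^{1-\alpha})^2$. For the numerator I would split $B-A = T_1 + \mu T_2$ with $T_1:=(e^z-1)^2 e^{-z} - z^2$ and $T_2:=\psi(z)(e^z-1)^2 e^{-z} - z^{1-\alpha}$. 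Since $T_1$ is entire with a fourth-order zero at the origin, $|T_1|\leq C|z|^4$ on the bounded set $\Upsilon_1$. Using \cref{eq:shit-7} to write $\psi(z) = (e^z-1)z^{-2-\alpha} + h(z)$ with $h$ analytic on $\{|\operatorname{Im}w|<2\pi\}$, together with the fourth-order cancellation $(e^z-1)^3 e^{-z} - z^3 = O(z^4)$, I would deduce $|T_2|\leq C_\alpha|z|^{2-\alpha}$ on $\Upsilon_1$.

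The main obstacle is the uniform-in-$\mu$ bound of $\int_{\Upsilon_1}(C|z|^4+C_\alpha\mu|z|^{2-\alpha})(|z|^2+\mu|z|^{1-\alpha})^{-2}\,|\mathrm{d}z|$: the $\mu$-dependent piece has a peak of order $\mu^{-1/(1+\alpha)}$ near $|z|\sim\mu^{1/(1+\alpha)}$, so a naive pointwise bound is useless. I would resolve this by splitting each ray of $\Upsilon_1$ at the radius $r=\mu^{1/(1+\alpha)}$ and invoking $(r^2+\mu r^{1-\alpha})^2\geq \mu^2 r^{2-2\alpha}$ on the inner part and $(r^2+\mu r^{1-\alpha})^2\geq r^4$ on the outer part; equivalently, the rescaling $r=\mu^{1/(1+\alpha)}s$ reduces the $\mu$-part to the convergent $\mu$-free integral $\int_0^\infty s^{2-\alpha}(s^2+s^{1-\alpha})^{-2}\,\mathrm{d}s$, while the $T_1$-part is trivially $\leq \pi/\sin\theta$. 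Each piece is bounded by a constant depending only on $\alpha$, and combined with $|e^{kz}|\leq 1$ on $\Upsilon_1$ this gives $|\mathbb{I}_2|\leq C_\alpha \tau$. Adding the two estimates proves \cref{eq:z-Z}.
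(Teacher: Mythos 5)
Your proposal is correct and follows essentially the same route as the paper: the same decomposition into $\mathbb I_1$ (tail of $\Upsilon$) and $\mathbb I_2$ (difference of symbols on $\Upsilon_1$), the same lower bound $\snm{1+\mu\psi(z)} \gtrsim 1+\mu\snm{z}^{-1-\alpha}$, the same numerator cancellation of size $\snm{z}^4+\mu\snm{z}^{2-\alpha}$, and the same splitting of the integral at $\snm{z}\sim\mu^{1/(1+\alpha)}$. The only (harmless) differences are stylistic: you use a single unified lower bound $\snm{z^2+\mu z^{1-\alpha}}\gtrsim \snm{z}^2+\mu\snm{z}^{1-\alpha}$ where the paper states a two-case bound, and you spell out the Taylor-expansion source of the numerator estimate more explicitly.
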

\begin{proof}
	Since the proof of this theorem is similar to that of \cref{thm:y-Y}, we only
	highlight the differences. Proceeding as in the proof of \cref{thm:y-Y} yields
	\[
		\xi(t_k) - Y_k = \mathbb I_1 + \mathbb I_2,
	\]
	where
	\begin{align*}
		\mathbb I_1 &:= \frac\tau{2\pi i}
		\int_{\Upsilon \setminus \Upsilon_1} e^{kz}
		(z^2+\mu z^{1-\alpha})^{-1} \, \mathrm{d}z, \\
		\mathbb I_2 &:=
		\frac\tau{2\pi i} \int_{\Upsilon_1} e^{kz} \Big(
			(z^2 + \mu z^{1-\alpha})^{-1} -
			(1+\mu\psi(z))^{-1} (e^z-1)^{-2} e^z
		\Big) \, \mathrm{d}z.
	\end{align*}
	Moreover,
	\begin{align*}
		\snm{\mathbb I_1} < C_\alpha \tau \int_{\pi/\sin\theta}^\infty
		e^{kr\cos\theta} r^{-2} \, \mathrm{d}r
		< C_\alpha \tau \int_{\pi/\sin\theta}^\infty
		e^{kr\cos\theta} \, \mathrm{d}r
		< C_\alpha \tau k^{-1} e^{k\pi\cot\theta}.
	\end{align*}
	For any $ z \in \Upsilon_1 \setminus \{0\} $, since
	\begin{align*}
		& z^2 + \mu z^{1-\alpha} = z^{1-\alpha}(z^{1+\alpha} + \mu) \\
		={} &
		\snm{z}^{1-\alpha} e^{i(1-\alpha)\theta}
		\Big(
			\snm{z}^{1+\alpha} \cos\big((1+\alpha)\theta\big) + \mu +
			i \snm{z}^{1+\alpha} \sin\big((1+\alpha)\theta\big)
		\Big),
	\end{align*}
	from the fact $ \pi/2 < \theta < (\alpha+3)/(4\alpha+4)\pi $ it follows that there
	exits a positive constant $ c $, depending only on $ \alpha $, such that
	\[
		\snm{z^2 + \mu z^{1-\alpha}} >
		\begin{cases}
			C_\alpha \mu \snm{z}^{1-\alpha} &
			\text{if}\quad 0 < \snm{z} \leqslant c\mu^{1/(1+\alpha)} , \\
			C_\alpha \snm{z}^2 &
			\text{if}\quad c \mu^{1/(1+\alpha)} \leqslant
			\snm{z} \leqslant \pi/\sin\theta.
		\end{cases}
	\]
	By \cref{eq:psi}, a routine calculation gives
	\[
		\snm{(1+\mu\psi(z))(e^z-1)^2 - (z^2+\mu z^{1-\alpha}) e^z}
		< C_\alpha \big( \snm{z}^4 + \mu\snm{z}^{2-\alpha} \big),
	\]
	and, similar to \cref{eq:psi-growth}, we have
	\[
		\snm{1+\mu\psi(z)} > C_\alpha (1+\mu \snm{z}^{-1-\alpha}).
	\]
	Using the above three estimates, we obtain
	\begin{align*}
		& \snm{
			(z^2+\mu z^{1-\alpha})^{-1} - (1+\mu\psi(z))^{-1}(e^z-1)^{-2}e^z
		} \\
		={} &
		\Snm{
			\frac{
				(1+\mu\psi(z))(e^z-1)^2 - (z^2+\mu z^{1-\alpha})e^z
			}{
				(z^2+\mu z^{1-\alpha})(1+\mu\psi(z))(e^z-1)^2
			}
		} \\
		<{} &
		\left\{
			\begin{aligned}
				& C_\alpha \frac{\snm{z}^4 + \mu\snm{z}^{2-\alpha}}
				{\mu(\snm{z}^{3-\alpha} + \mu\snm{z}^{2-2\alpha})}
				\qquad\quad\text{ if }\,\, 0 < \snm{z} \leqslant c\mu^{1/(1+\alpha)}, \\
				& C_\alpha \frac{\snm{z}^4 + \mu\snm{z}^{2-\alpha}}
				{\snm{z}^4 + \mu\snm{z}^{3-\alpha}}
				\qquad\qquad\qquad\text{ if}\,\, c\mu^{1/(1+\alpha)} < \snm{z}
				\leqslant \pi/\sin\theta,
			\end{aligned}
		\right. \\
		<{} &
		\left\{
			\begin{aligned}
				& C_\alpha \big(
					1 + \mu^{-1} \snm{z}^\alpha
				\big) \qquad\qquad\qquad\,\,\text{ if }
				\,\, 0 < \snm{z} \leqslant c\mu^{1/(1+\alpha)}, \\
				& C_\alpha \big( 1 + \mu \snm{z}^{-2-\alpha} \big)
				\qquad\qquad\qquad\text{ if}
				\,\, c\mu^{1/(1+\alpha)} < \snm{z} < \pi/\sin\theta,
			\end{aligned}
		\right.
	\end{align*}
	for all $ z \in \Upsilon_1 \setminus \{0\} $. Therefore, if $ c\mu^{1/(1+\alpha)}
	\leqslant \pi/\sin\theta $ then
	\begin{align*}
		\snm{\mathbb I_2} & < C_\alpha \tau \bigg(
			\int_0^{c\mu^{1/(1+\alpha)}}
			e^{kr\cos\theta} \big(
				1 + \mu^{-1} r^\alpha
			\big) \, \mathrm{d}r \\
			& \qquad\qquad {} +
			\int_{ c\mu^{1/(1+\alpha)} }^{ \pi/\sin\theta }
			e^{kr\cos\theta}(1 + \mu r^{-2-\alpha}) \, \mathrm{d}r
		\bigg) \\
		& < C_\alpha \tau \Big(
			\int_0^{c\mu^{1/(1+\alpha)}}
			1 + \mu^{-1} r^\alpha \, \mathrm{d} r +
			\int_{c\mu^{1/(1+\alpha)}}^{\pi/\sin\theta}
			1 + \mu r^{-2-\alpha} \, \mathrm{d}r
		\Big) \\
		& < C_\alpha \tau,
	\end{align*}
	and if $ c\mu^{1/(1+\alpha)} > \pi/\sin\theta $ then
	\[
		\snm{\mathbb I_2}  < C_\alpha \tau
		\int_0^{\pi\sin\theta}
		e^{kr\cos\theta} \big(
			1 + \mu^{-1} r^\alpha
		\big) \, \mathrm{d}r < C_\alpha\tau.
	\]

	Finally, combing the above estimates for $ \mathbb I_1 $ and $ \mathbb I_2 $ proves
	\cref{eq:z-Z} and hence this theorem.
\end{proof}

\section{Main results}
\label{sec:main}
In the rest of this paper, we assume that $ h < e^{-2(1+\alpha)} $ and $ \tau < T/e
$. The symbol $ a\lesssim b $ means that there exists a positive constant $ C $,
depending only on $ \alpha $, $ T $, $ \Omega $, the shape-regular parameter of $
\mathcal K_h $ and the ratio of $ h $ to the minimum diameter of the elements in $
\mathcal K_h $, unless otherwise specified, such that $ a \leqslant Cb $.
Additionally, since the following properties are frequently used in the forthcoming
analysis, we will use them implicitly (cf.~\cite{Samko1993}):
\begin{align*}
	& \D_{a+}^\beta \D_{a+}^\gamma =
	\D_{a+}^{\beta+\gamma}, \quad
	\D_{b-}^\beta \D_{b-}^\gamma = \D_{b-}^{\beta+\gamma},
	\text{ and } \\
	& \dual{\D_{a+}^\beta v, w}_{(a,b)} =
	\dual{v, \D_{b-}^\beta w}_{(a,b)},
	\quad v,w \in L^2(a,b),
\end{align*}
where $ -\infty < a < b < \infty $ and $ -\infty < \beta, \gamma < 0 $.

\begin{thm}
	\label{thm:conv-u0}
	If $ u_0 \in L^2(\Omega) $ and $ f = 0 $, then
	\begin{equation}
		\label{eq:conv-u0}
		\nm{u(t_j) - U_j}_{L^2(\Omega)} \lesssim
		\big(
			h^2 t_j^{-\alpha-1} + \tau t_j^{-1}
		\big) \nm{u_0}_{L^2(\Omega)}
	\end{equation}
	for all $ 1 \leqslant j \leqslant J $.
\end{thm}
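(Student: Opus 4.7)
The strategy is to split the error through a spatially semidiscrete auxiliary solution $u_h \in C([0,T];S_h)$ defined by $u_h(0) = P_h u_0$ and
\[
	\langle u_h'(t), v_h\rangle_\Omega + \langle \nabla \D_{0+}^{-\alpha} u_h(t), \nabla v_h\rangle_\Omega = 0
	\quad \forall v_h \in S_h,\ t \in (0,T),
\]
and to write $u(t_j) - U_j = (u(t_j) - u_h(t_j)) + (u_h(t_j) - U_j)$. The first piece will yield the $h^2 t_j^{-\alpha-1}$ contribution and the second the $\tau t_j^{-1}$ contribution in \cref{eq:conv-u0}.

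The temporal piece is a direct application of the scalar theory of \cref{ssec:first_ode}. Let $\{(\lambda_n^h, \phi_n^h)\}$ denote the eigenpairs of the discrete Laplacian $-\Delta_h:S_h\to S_h$ associated with $\langle \nabla \cdot, \nabla \cdot\rangle_\Omega$, and expand $u_h = \sum_n \xi_n(t)\phi_n^h$ and $U = \sum_n \Xi_n(t)\phi_n^h$. Testing against $v_h=\phi_n^h$ and against $V=c(t)\phi_n^h$, $c \in W_{\tau,h}$, respectively, decouples both problems into independent scalar fractional ODEs: each $\xi_n$ satisfies $\xi_n' + \lambda_n^h \D_{0+}^{-\alpha}\xi_n = 0$ with $\xi_n(0) = \langle u_0, \phi_n^h\rangle$, and each piecewise-constant $\Xi_n$ coincides exactly with the DG approximation studied in \cref{ssec:first_ode} with $\mu = \lambda_n^h\tau^{1+\alpha}$. \cref{thm:y-Y} then gives $\snm{\xi_n(t_j)-\Xi_{n,j}} \lesssim j^{-1}\snm{\langle u_0,\phi_n^h\rangle}$ modewise, and Parseval together with $\nm{P_h u_0}_{L^2(\Omega)}\leqslant \nm{u_0}_{L^2(\Omega)}$ yields
\[
	\nm{u_h(t_j) - U_j}_{L^2(\Omega)} \lesssim j^{-1}\nm{u_0}_{L^2(\Omega)} = \tau t_j^{-1}\nm{u_0}_{L^2(\Omega)}.
\]

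For the spatial piece I would represent both solutions by inverse Laplace transforms on the contour $\Upsilon$,
\[
	u(t) = \frac{1}{2\pi i}\int_\Upsilon e^{tz} z^\alpha (z^{1+\alpha}I - \Delta)^{-1} u_0\,\mathrm{d}z,\qquad u_h(t) = \frac{1}{2\pi i}\int_\Upsilon e^{tz} z^\alpha (z^{1+\alpha}I - \Delta_h)^{-1} P_h u_0\,\mathrm{d}z,
\]
which are valid because the admissible angle $\theta$ from \cref{lem:1+mupsi} places $w := z^{1+\alpha}$ in a sector strictly to the left of the positive real axis and hence inside the resolvent sets of both $-\Delta$ and $-\Delta_h$. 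A standard Aubin-Nitsche-type duality argument for the $w$-shifted elliptic problem then delivers the uniform-in-$z$ estimate
\[
	\Nm{[(z^{1+\alpha}I-\Delta)^{-1} - (z^{1+\alpha}I-\Delta_h)^{-1}P_h]u_0}_{L^2(\Omega)} \lesssim h^2 \nm{u_0}_{L^2(\Omega)},
\]
and combining this with the elementary bound $\int_\Upsilon \snm{e^{tz}}\snm{z}^\alpha\,\snm{\mathrm{d}z}\lesssim t^{-\alpha-1}$ (via $z=re^{\pm i\theta}$ and $\cos\theta<0$) gives $\nm{u(t_j)-u_h(t_j)}_{L^2(\Omega)} \lesssim h^2 t_j^{-\alpha-1}\nm{u_0}_{L^2(\Omega)}$.

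The main obstacle is the $h^2$-uniform elliptic error estimate on $\Upsilon$: since $\snm{\arg w}=(1+\alpha)\theta$ can approach $\pi$ and $\snm{w}$ is unbounded, the sesquilinear form $w\langle \cdot,\cdot\rangle_\Omega + \langle\nabla\cdot,\nabla\cdot\rangle_\Omega$ is only weakly coercive (after a suitable rotation of the phase in $\mathbb C$), and the $\snm{w}$-factors arising from the zeroth-order term in the duality argument must be absorbed against the resolvent bound $\nm{(wI-\Delta)^{-1}}_{L^2\to L^2}\lesssim \snm{w}^{-1}$ together with the full elliptic regularity $\nm{(wI-\Delta)^{-1}g}_{\dot H^2(\Omega)}\lesssim \nm{g}_{L^2(\Omega)}$ provided by the convexity of $\Omega$. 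Once the $h^2$-uniformity is established, the remaining contour manipulations are routine and mirror the scalar arguments in \cref{sec:disc_regu}.
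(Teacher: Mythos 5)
Your proposal follows essentially the same route as the paper: the identical decomposition through the spatially semidiscrete solution $u_h$ with $u_h(0)=P_hu_0$, with the temporal error $\nm{u_h(t_j)-U_j}_{L^2(\Omega)}\lesssim \tau t_j^{-1}\nm{u_0}_{L^2(\Omega)}$ obtained by diagonalizing over the eigenpairs of $-\Delta_h$ and invoking \cref{thm:y-Y} exactly as the paper does. The only difference is that for the spatial piece the paper simply cites \cite[Theorem 2.1]{Lubich1996}, whereas you sketch a proof of that cited result via the resolvent representation and the uniform $O(h^2)$ resolvent-difference estimate; your sketch is a correct outline of the standard argument behind that citation.
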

\begin{proof}
	Let $ u_h $ be the solution of the spatially discrete problem:
	\[
		u_h'(t) - \Delta_h \D_{0+}^{-\alpha} u_h(t) = 0,
		\quad t > 0,
	\]
	subjected to the initial value condition $ u_h(0) = P_h u_0 $, where the discrete
	Laplace operator $ \Delta_h:S_h \to S_h $ is defined by that
	\[
		\dual{-\Delta_h v_h,w_h}_\Omega :=
		\dual{\nabla v_h, \nabla w_h}_\Omega
		\quad\text{for all}\, v_h, w_h \in S_h.
	\]
	By \cite[Theorem
	2.1]{Lubich1996} we have
	\[
		\nm{u(t) - u_h(t)}_{L^2(\Omega)} \lesssim
		h^2 t^{-\alpha-1} \nm{u_0}_{L^2(\Omega)},
		\quad t > 0,
	\]
	and by \cref{thm:y-Y} we obtain
	\[
		\nm{U_j - u_h(t_j)}_{L^2(\Omega)}
		\lesssim \tau t_j^{-1} \nm{u_0}_{L^2(\Omega)}.
	\]
	Combining the above two estimates proves \cref{eq:conv-u0} and hence this theorem.
\end{proof}

\begin{thm}
	\label{thm:f-const}
	If $ u_0 = 0 $ and $ f(t) = v \in L^2(\Omega) $, $ 0 < t < T $, then
	\begin{equation}
		\label{eq:f-const}
		\nm{u(t_j) - U_j}_{L^2(\Omega)} \lesssim
		\big( t_j^{-\alpha} h^2 + \tau \big) \nm{v}_{L^2(\Omega)}
	\end{equation}
	for all $ 1 \leqslant j \leqslant J $.
\end{thm}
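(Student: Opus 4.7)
The plan is to follow the split used in the proof of \cref{thm:conv-u0}. Introduce the spatially semidiscrete solution $u_h \in C([0,T];S_h)$ defined by $u_h(0) = 0$ and
\[
  u_h'(t) - \Delta_h \D_{0+}^{-\alpha} u_h(t) = P_h v, \quad t > 0,
\]
and decompose
\[
  u(t_j) - U_j = \bigl(u(t_j) - u_h(t_j)\bigr) + \bigl(u_h(t_j) - U_j\bigr).
\]
The task reduces to proving the spatial estimate $\nm{u(t) - u_h(t)}_{L^2(\Omega)} \lesssim h^2 t^{-\alpha} \nm{v}_{L^2(\Omega)}$ and the temporal estimate $\nm{u_h(t_j) - U_j}_{L^2(\Omega)} \lesssim \tau \nm{v}_{L^2(\Omega)}$ separately.

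The temporal estimate is the easier half. Diagonalizing in the $L^2$-orthonormal eigenbasis $\{\phi_n^{(h)}\}$ of $-\Delta_h$, with eigenvalues $\{\lambda_n^{(h)}\}$, and writing $P_h v = \sum_n v_n^{(h)} \phi_n^{(h)}$, the $n$th Fourier coefficient of $u_h$ equals $v_n^{(h)} \xi_n(t)$, where $\xi_n$ solves $\xi_n'(t) + \lambda_n^{(h)} \D_{0+}^{-\alpha}\xi_n(t) = 1$ with $\xi_n(0) = 0$, while the corresponding coefficient of $U_j$ is $v_n^{(h)} Y_{n,j}$, with $(Y_{n,k})_k$ the time-stepping approximation defined immediately before \cref{thm:z-Z}. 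That theorem then gives $\snm{\xi_n(t_j) - Y_{n,j}} \lesssim \tau$ uniformly in $n$, and Parseval's identity yields $\nm{u_h(t_j) - U_j}_{L^2(\Omega)} \lesssim \tau \nm{P_h v}_{L^2(\Omega)} \leqslant \tau \nm{v}_{L^2(\Omega)}$.

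For the spatial estimate I would argue by Laplace transform. Using $\widehat f(z) = v/z$ for the constant source, one obtains the representation
\[
  u(t) = \frac{1}{2\pi i}\int_\Gamma e^{tz}\, z^{\alpha-1} (z^{1+\alpha} - \Delta)^{-1} v \, \mathrm{d}z,
\]
and the analogue for $u_h$ upon replacing $\Delta$ by $\Delta_h$ and $v$ by $P_h v$; here $\Gamma$ is taken to be a Hankel-type contour $\{r e^{\pm i\theta}: r \geqslant 0\}$ with $\pi/2 < \theta < \pi/(1+\alpha)$, so that $z^{1+\alpha}$ stays in a sector bounded away from the spectrum of $\Delta$. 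Subtracting the two representations and invoking the uniform elliptic resolvent error estimate
\[
  \Nm{(z^{1+\alpha} - \Delta)^{-1} v - (z^{1+\alpha} - \Delta_h)^{-1} P_h v}_{L^2(\Omega)} \lesssim h^2 \nm{v}_{L^2(\Omega)} \quad (z \in \Gamma),
\]
the remaining integral $\int_\Gamma \snm{e^{tz}}\snm{z}^{\alpha-1}\,\snm{\mathrm{d}z}$ is a standard gamma-function calculation producing the desired factor $t^{-\alpha}$.

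The main obstacle will be securing this uniform-in-$z$ resolvent error bound. Its proof is a routine Aubin--Nitsche duality argument, but one must verify that the $L^2$-to-$\dot H^2$ regularity bound $\nm{(z^{1+\alpha}-\Delta)^{-1}v}_{\dot H^2(\Omega)} \lesssim \nm{v}_{L^2(\Omega)}$ holds with a constant independent of $z$ along $\Gamma$; this follows from $\operatorname{dist}(z^{1+\alpha},(-\infty,0]) \gtrsim \snm{z}^{1+\alpha}$ on $\Gamma$, which yields $\nm{\Delta(z^{1+\alpha}-\Delta)^{-1}v}_{L^2(\Omega)} \leqslant \snm{z}^{1+\alpha}\nm{(z^{1+\alpha}-\Delta)^{-1}v}_{L^2(\Omega)} + \nm{v}_{L^2(\Omega)} \lesssim \nm{v}_{L^2(\Omega)}$.
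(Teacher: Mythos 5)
Your proposal is correct and follows essentially the same route as the paper: the identical splitting through the spatially semidiscrete solution $u_h$ with source $P_hv$, with \cref{thm:z-Z} (applied in the eigenbasis of $-\Delta_h$) giving $\nm{u_h(t_j)-U_j}_{L^2(\Omega)} \lesssim \tau\nm{v}_{L^2(\Omega)}$. The only difference is that the paper simply cites \cite[Theorem~2.2]{Lubich1996} for the spatial bound $\nm{u(t)-u_h(t)}_{L^2(\Omega)} \lesssim h^2 t^{-\alpha}\nm{v}_{L^2(\Omega)}$, whereas you sketch its standard proof via the Laplace-transform representation and the uniform sectorial resolvent error estimate.
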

\begin{proof}
	Let $ u_h $ be the solution of the spatially discrete problem:
	\[
		u_h'(t) - \Delta_h \D_{0+}^{-\alpha} u_h(t) = P_h v,
		\quad t > 0,
	\]
	subjected to the initial value condition $ u_h(0) = 0 $. By \cite[Theorem
	2.2]{Lubich1996} we have
	\[
		\nm{u(t) - u_h(t)}_{L^2(\Omega)} \lesssim
		t^{-\alpha} h^2 \nm{v}_{L^2(\Omega)},
		\quad t > 0,
	\]
	and \cref{thm:z-Z} implies
	\[
		\nm{U_j - u_h(t_j)}_{L^2(\Omega)} \lesssim
		\tau \nm{P_hv}_{L^2(\Omega)} \lesssim
		\tau \nm{v}_{L^2(\Omega)}.
	\]
	Combining the above two estimates proves \cref{eq:f-const} and hence this theorem.
\end{proof}

\begin{thm}
	\label{thm:conv_f_L2}
	If $ u_0 = 0 $ and $ f \in L^2(0,T;\dot H^{\alpha/(\alpha+1)}(\Omega)\!) $, then
	\begin{equation}
		\label{eq:conv_f_L2}
		\nm{u-U}_{L^\infty(0,T;L^2(\Omega))}
		\lesssim \left( h + \sqrt{\ln(1/h)} \tau^{1/2} \right)
		\nm{f}_{L^2(0,T;\dot H^{\alpha/(\alpha+1)}(\Omega)\!)}.
	\end{equation}
\end{thm}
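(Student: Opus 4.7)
The plan is to split the error as $u - U = (u - u_h) + (u_h - U)$, where $u_h \in C([0,T];S_h)$ is the spatially semi-discrete Galerkin solution defined by $u_h'(t) - \Delta_h \D_{0+}^{-\alpha} u_h(t) = P_h f(t)$ with $u_h(0) = 0$; this is the same intermediate object already used in the proofs of \cref{thm:conv-u0,thm:f-const}.

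For the spatial piece I would invoke \cref{thm:regu-pde} with $\gamma = 0$ and $\beta = \alpha/(\alpha+1)$, which places $u$ in $C([0,T]; \dot H^1(\Omega))$ with norm controlled by $\nm{f}_{L^2(0,T;\dot H^{\alpha/(\alpha+1)}(\Omega))}$. A standard Ritz-projection based analysis of the semi-discrete fractional diffusion-wave equation (essentially the argument underlying \cite[Theorem 2.2]{Lubich1996}, but exploiting this weaker $\dot H^1$ regularity and the corresponding first-order interpolation estimate on $S_h$) then yields
\[
\nm{u - u_h}_{L^\infty(0,T;L^2(\Omega))} \lesssim h\,\nm{f}_{L^2(0,T;\dot H^{\alpha/(\alpha+1)}(\Omega))},
\]
which accounts for the $h$ term in the final bound.

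For the temporal piece I would diagonalise by the eigenpairs $\{(\lambda_n^h,\phi_n^h)\}$ of $-\Delta_h$, writing $u_h = \sum_n c_n\phi_n^h$, $U = \sum_n C_n\phi_n^h$, and $f_n(t) := \dual{f(t),\phi_n^h}_\Omega$. Each pair $(c_n,C_n)$ is precisely the continuous/DG pair studied in \cref{ssec:first_ode}, driven by $f_n \in L^2(0,T)$ with zero initial data. The target per-mode estimate is
\[
\max_{1 \leq j \leq J}\snm{c_n(t_j) - C_n(t_j)}^2 \lesssim \tau\,\ln(1/\tau)\,(\lambda_n^h)^{\alpha/(\alpha+1)}\,\nm{f_n}_{L^2(0,T)}^2.
\]
Summing in $n$, using the spectral identity $\sum_n(\lambda_n^h)^{\alpha/(\alpha+1)}\nm{f_n}_{L^2}^2 \lesssim \nm{f}_{L^2(0,T;\dot H^{\alpha/(\alpha+1)}(\Omega))}^2$ (the equivalence between the $-\Delta$ and $-\Delta_h$ Sobolev scales at exponent $\alpha/(\alpha+1) < 1$ being absorbed, when necessary, into the spatial error already controlled), together with $\ln(1/\tau) \lesssim \ln(1/h)$ in the delicate regime $\tau \geq h^2$ (the complementary regime being easier), then delivers the $\sqrt{\ln(1/h)}\,\tau^{1/2}$ temporal factor.

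The principal obstacle is the per-mode estimate above, since neither \cref{thm:y-Y} (homogeneous) nor \cref{thm:z-Z} (constant source) covers a rough $L^2$ source. My plan is to use Duhamel: write $c_n = G(\cdot;\lambda_n^h) \ast f_n$ with the fundamental kernel $G(t;\lambda) = \frac{1}{2\pi i}\int_\Upsilon e^{tz}\frac{z^\alpha}{z^{1+\alpha}+\lambda}\,\mathrm{d}z$, and obtain an analogous discrete convolution representation $C_n = G^\tau(\cdot;\lambda_n^h) \ast f_n$ with kernel $G^\tau$ extracted from the contour formula \cref{eq:Y_k-3} by the same discrete Laplace inversion used in \cref{ssec:first_ode}. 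The error $c_n - C_n$ is then the convolution of $f_n$ with the kernel error $G - G^\tau$, which I would bound by deforming the Hankel contour exactly as in the proofs of \cref{thm:y-Y,thm:z-Z}, but now carefully tracking the $\lambda$-dependence across the cross-over $\snm{z} \asymp \lambda^{1/(1+\alpha)}$. A Cauchy--Schwarz inequality in time finally transfers the resulting $L^2_t$ bound on the kernel error into the desired $L^\infty_t$ bound on $c_n - C_n$, with the power $(\lambda_n^h)^{\alpha/(\alpha+1)}$ and the logarithmic factor emerging naturally from integrating across the two contour regimes.
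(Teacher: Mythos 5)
Your route is genuinely different from the paper's: the paper never introduces the semidiscrete solution $u_h$ for this theorem, but instead compares $U$ directly with $P_\tau P_h u$ through a Galerkin-orthogonality/coercivity argument (via \cref{lem:coer,lem:VV'}), and the $\sqrt{\ln(1/h)}$ factor comes from an inverse inequality with $\epsilon=(2\ln(1/h))^{-1}$ rather than from any contour integration. Your temporal step is in principle viable --- \cref{thm:y-Y} already gives the kernel error $\snm{\xi(t_k)-Y_k}\lesssim k^{-1}\snm{\xi_0}$ \emph{uniformly} in $\mu$, so a Duhamel/Cauchy--Schwarz argument can be pushed through --- provided you (a) account for the fact that the DG solution is a \emph{discrete} convolution against the cell averages $\int_{I_i}f_n$ rather than a continuous convolution against $f_n$, which forces you to also estimate the modulus of continuity of the continuous kernel within each cell, and (b) pass from nodal values to the full $L^\infty(0,T;L^2(\Omega))$ norm, since $U$ is piecewise constant in time.

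The genuine gap is the spatial piece. You assert $\nm{u-u_h}_{L^\infty(0,T;L^2(\Omega))}\lesssim h\nm{f}_{L^2(0,T;\dot H^{\alpha/(\alpha+1)}(\Omega))}$ by appeal to ``a standard Ritz-projection based analysis \dots essentially \cite[Theorem 2.2]{Lubich1996}.'' That theorem treats only a time-independent source and does not cover $f\in L^2(0,T;\dot H^{\alpha/(\alpha+1)}(\Omega))$. More importantly, the standard Ritz-projection energy argument writes $u_h-R_hu$ as the solution of a semidiscrete problem driven by $(I-R_h)u'$, and under the present hypotheses \cref{thm:regu-pde} gives only $u'\in L^2(0,T;\dot H^{\alpha/(\alpha+1)}(\Omega))$, so this route yields $O(h^{\alpha/(\alpha+1)})$, not $O(h)$. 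The first-order rate requires trading temporal for spatial regularity: interpolating the two bounds in \cref{eq:regu-pde} gives $u\in{}_0H^{-\alpha/2}(0,T;\dot H^2(\Omega))$, and the paper exploits precisely this through
\[
	\nm{(I-P_h)\D_{0+}^{-\alpha/2}u}_{L^2(0,T;\dot H^1(\Omega))}
	\lesssim h\,\nm{u}_{{}_0H^{-\alpha/2}(0,T;\dot H^2(\Omega))},
\]
which becomes usable only after the coercivity-based rearrangement
$\dual{\nabla\D_{0+}^{-\alpha}(I-P_h)u,\nabla\theta}_{\Omega\times(0,t_j)}
=\dual{\nabla(I-P_h)\D_{0+}^{-\alpha/2}u,\nabla\D_{t_j-}^{-\alpha/2}\theta}_{\Omega\times(0,t_j)}$
together with \cref{lem:coer}. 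Without this (or an equivalent negative-order-in-time device), the claimed $h$ term is unsupported by the regularity you invoke, so the spatial half of your decomposition does not close as written.
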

\begin{rem}
	Since \cref{thm:regu-pde} implies
	\begin{align*}
		\nm{u}_{{}_0H^1(0,T;\dot H^{\alpha/(\alpha+1)}(\Omega)\!)} +
		\nm{u}_{C([0,T];\dot H^1(\Omega)\!)} & \leqslant
		C_\alpha \nm{f}_{L^2(0,T;\dot H^{\alpha/(\alpha+1)}(\Omega)\!)},
	\end{align*}
	error estimate \cref{eq:conv_f_L2} is nearly optimal with respect to the
	regularity of $ u $.
\end{rem}
\begin{thm}
	\label{thm:conv_f_higher}
	If $ u_0 = 0 $ and $ f \in {}_0H^{\alpha+1/2}(0,T;L^2(\Omega)) $, then
	\begin{equation}
		\label{eq:conv_f_higher}
		\nm{u\!-\!U}_{L^\infty(0,T;L^2(\Omega))} \lesssim
		\ln(T/\tau) \left( \sqrt{\ln(1/h)}\, h^2 \!+\! \tau \right)
		\nm{f}_{{}_0H^{\alpha+1/2}(0,T;L^2(\Omega))}.
	\end{equation}
\end{thm}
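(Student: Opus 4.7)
The plan is to decompose the error as $u - U = (u - u_h) + (u_h - U)$, where $u_h \in C([0,T];S_h)$ is the spatial semi-discrete solution
\[
u_h'(t) - \Delta_h \D_{0+}^{-\alpha} u_h(t) = P_h f(t), \quad t \in (0,T), \qquad u_h(0) = 0,
\]
with $\Delta_h$ as in the proof of \cref{thm:conv-u0}. The first summand handles the spatial error and will contribute the $\sqrt{\ln(1/h)}\,h^2$ term; the second handles the time-stepping DG error and will contribute the $\ln(T/\tau)\,\tau$ term.

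For the spatial error, I would apply the semi-discrete estimate from \cite{Lubich1996} in the form $\|u(t) - u_h(t)\|_{L^2(\Omega)} \lesssim h^{2(1-\epsilon)} \|u(t)\|_{\dot H^{2(1-\epsilon)}(\Omega)}$, combined with the last estimate of \cref{thm:regu-pde}: for every $0 < \epsilon < 1$,
\[
\nm{u}_{C([0,T];\dot H^{2(1-\epsilon)}(\Omega))} \leqslant \frac{C_\alpha}{\sqrt{\epsilon}}\nm{f}_{{}_0H^{\alpha+1/2}(0,T;L^2(\Omega))}.
\]
Optimizing $\epsilon = 1/\ln(1/h)$ produces the factor $\sqrt{\ln(1/h)}\, h^2$, which matches the desired bound.

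For the time-stepping error $\|u_h - U\|_{L^\infty(0,T;L^2(\Omega))}$, I would diagonalize in the eigenbasis $\{\phi_{n,h}\}$ of $-\Delta_h$ (with eigenvalues $\{\lambda_{n,h}\}$). Writing $f_n(t) := \dual{f(t), \phi_{n,h}}_\Omega$, each modal coefficient $u_{h,n}(t) = \dual{u_h(t),\phi_{n,h}}_\Omega$ satisfies a scalar equation of the same form as the ODE treated in Section~\ref{sec:disc_regu}, now with a time-dependent right-hand side $f_n$; the modal coefficient $U_{n,k}$ of $U$ satisfies its corresponding DG discretization. By Duhamel's principle applied to both, $u_{h,n}(t_k) - U_{n,k}$ becomes a convolution of $f_n$ against the \emph{difference} of the continuous kernel $e^{tz}z^{-1}(z+\lambda_{n,h}z^{-\alpha})^{-1}$ and the discrete kernel $(1+\mu_n\psi(z))^{-1}(e^z-1)^{-2}e^z$ from \cref{eq:Z_k}. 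The contour estimates already developed in the proofs of \cref{thm:y-Y,thm:z-Z} give, uniformly in $\lambda_{n,h}$, that this kernel difference is $O(\tau)$ away from the diagonal with an integrable singularity there; summing by parts in time and applying Parseval in space yields
\[
\nm{u_h(t_j) - U_j}_{L^2(\Omega)} \lesssim \ln(T/\tau)\, \tau\, \nm{f}_{{}_0H^{\alpha+1/2}(0,T;L^2(\Omega))},
\]
with $\ln(T/\tau)$ arising from an $\ell^2 \hookrightarrow \ell^\infty$ argument over the $J = T/\tau$ time steps.

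The main obstacle is upgrading the fixed-source bounds in \cref{thm:y-Y,thm:z-Z} to a time-varying source estimate that is both $\lambda_{n,h}$-uniform and absorbs the Sobolev regularity $f \in {}_0H^{\alpha+1/2}$ cleanly. Concretely, one has to reorganize the contour integrals so that $\alpha + 1/2$ temporal derivatives of $f$ are available to compensate the singularity of the kernel difference near the current time step, and one must show that the remaining summation over $j$ time steps costs only a single logarithm $\ln(T/\tau)$ rather than a full power. This is the analytic heart of the argument; once it is carried out, combining the two pieces gives \cref{eq:conv_f_higher}.
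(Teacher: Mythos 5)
There is a genuine gap, and it sits exactly where you place ``the analytic heart of the argument'': the temporal error $\nm{u_h - U}_{L^\infty(0,T;L^2(\Omega))}$ for a general time-dependent source is asserted, not proved. \cref{thm:y-Y} and \cref{thm:z-Z} only cover the two special cases $f=0$ (nonzero initial datum) and $f\equiv 1$; passing to a Duhamel representation for a varying $f_n$ and bounding the difference of the continuous and discrete convolution kernels uniformly in $\lambda_{n,h}$ is a new convolution-quadrature-type estimate that you have not carried out, and you give no mechanism by which the $\alpha+1/2$ temporal derivatives of $f$ are consumed to reach first order. Moreover, the proposed source of the logarithm is wrong: an $\ell^2\hookrightarrow\ell^\infty$ embedding over $J=T/\tau$ steps costs a factor $(T/\tau)^{1/2}$, not $\ln(T/\tau)$, and would destroy the estimate.

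The paper avoids this difficulty entirely by a discrete duality argument. It compares $U$ with $P_\tau P_h u$ and introduces a backward discrete dual solution $W$ with final datum $\theta_J$ solving the \emph{homogeneous} adjoint DG scheme (\cref{lem:731}). Because $W$ solves a homogeneous problem, the fixed-data jump estimate of \cref{thm:Y-jump}, $\snm{Y_{k+1}-Y_k}\leqslant C_\alpha k^{-1}\snm{\xi_0}$, applies mode by mode and yields the $L^1$-in-time bound
\[
	\nm{Q_\tau\D_{T-}^{-\alpha}(-\Delta_h W)}_{L^1(0,T;L^2(\Omega))}
	\leqslant C_\alpha\ln(T/\tau)\nm{\theta_J}_{L^2(\Omega)},
\]
where the logarithm is precisely $\sum_{j=1}^{J-1}j^{-1}$ (\cref{thm:discrete_regu}). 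The temporal rate $\tau$ then comes from the projection errors $(I-Q_\tau)u$ and $(I-P_\tau)u$ via \cref{thm:regu-pde} and \cref{lem:interp}, not from a kernel-difference estimate. Your spatial piece is right in spirit --- the paper uses the same optimization $\epsilon=(\ln(1/h))^{-1}$ of $h^{2(1-\epsilon)}/\sqrt\epsilon$ --- but even there the paper works with the Ritz and $L^2$ projections of $u$ rather than citing a semidiscrete error bound from \cite{Lubich1996}, which in that reference is established only for the homogeneous problem and for time-independent $f$. As written, your proposal is a plan whose decisive step is missing.
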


\begin{rem}
	Assume that $ u $ satisfies the following regularity assumption: for any $ 0 < t
	\leqslant T $,
	\begin{align*}
		\nm{u(t)}_{\dot H^2(\Omega)} + t\nm{u'(t)}_{\dot H^2(\Omega)}
		& \leqslant M, \\
		\nm{u'(t)}_{L^2(\Omega)} + t\nm{u''(t)}_{L^2(\Omega)}
		& \leqslant M t^{\sigma-1}, \\
		t\nm{u'(t)}_{\dot H^2(\Omega)} +
		t^2 \nm{u''(t)}_{\dot H^2(\Omega)} & \leqslant Mt^{\sigma-1},
	\end{align*}
	where $ M $ and $ \sigma $ are two positive constants. Letting
	\[
		t_j = (j/J)^\gamma T \text{ for all } 1 \leqslant j \leqslant J,
		\quad \gamma > 1/\sigma,
	\]
	Mustapha and McLean \cite{Mustapha2009Discontinuous} obtained
	\[
		\nm{u(t_j)-U_j}_{L^2(\Omega)}
		\lesssim \nm{u_0 - U_0}_{L^2(\Omega)} + M \big( \ln(t_j/t_1)h^2 + T/J \big),
	\]
	and hence in the case $ u_0 \in L^2(\Omega) $ no convergence rate was derived.
	Besides, under the condition $ u_0 = 0 $ and $ f \in
	{}_0H^{\alpha+1/2}(0,T;L^2(\Omega)) $, by \cref{thm:regu-pde} we have only
	\[
		\nm{u}_{{}_0H^{\alpha+3/2}(0,T;L^2(\Omega))} +
		\nm{u}_{{}_0H^{1/2}(0,T;\dot H^2(\Omega))}
		\leqslant C_\alpha \nm{f}_{{}_0H^{\alpha+1/2}(0,T;L^2(\Omega))},
	\]
	so that $ u $ does not satisfy the above regularity assumption necessarily.
\end{rem}

\begin{rem}
	\cref{thm:f-const,thm:conv_f_higher} imply that if $ f \in
	H^{\alpha+1/2}(0,T;L^2(\Omega)) $ and $ f(0) \neq 0 $, then
	\begin{small}
	\[
		\nm{u(t_j) - U_j}_{L^2(\Omega)} \lesssim
		\left(
			\left( \ln(T/\tau) \sqrt{\ln(1/h)} + t_j^{-\alpha} \right) h^2 +
			\ln(T/\tau) \tau
		\right) \nm{f}_{H^{\alpha+1/2}(0,T;L^2(\Omega))}
	\]
	\end{small}
	for all $ 1 \leqslant j \leqslant J $, where $ H^{\alpha+1/2}(0,T;L^2(\Omega)) $ is
	defined analogously to the space $ {}_0H^{\alpha+1/2}(0,T;L^2(\Omega)) $.
	Furthermore, \cref{thm:conv-u0,thm:f-const} imply that if the accuracy of $ U $
	near $ t = 0 $ is unimportant, then using graded temporal grids to tackle the
	singularity caused by nonsmooth $ u_0 $ and $ f(0) $ is unnecessary.
\end{rem}

The rest of this section is devoted to the proofs of \cref{thm:conv_f_L2,thm:conv_f_higher}.
Let $ X $ be a separable Hilbert space. For any $ w \in C((0,T];X) $ we define
\[
	(P_\tau v)|_{I_j} \equiv v(t_j),
	\quad 1 \leqslant j \leqslant J,
\]
and for any $ v \in L^1(0,T;X) $ we define
\begin{equation}
	\label{eq:def-Q}
	(Q_\tau v)|_{I_j} \equiv \tau^{-1} \int_{I_j} v,
	\quad 1 \leqslant j \leqslant J.
\end{equation}
The operator $ Q_\tau $ possesses the standard estimates
\begin{align*}
	\nm{(I-Q_\tau)v}_{L^2(0,T;X)} &\leqslant \nm{v}_{L^2(0,T;X)}
	\phantom{\tau{}_0}\quad\forall v \in L^2(0,T;X), \\
	\nm{(I-Q_\tau)v}_{L^2(0,T;X)} &\lesssim \tau \nm{v}_{{}_0H^1(0,T;X)}
	\quad\forall v \in {}_0H^1(0,T;X).
\end{align*}
Hence, for any $ v \in {}_0H^\beta(0,T;X) $ with $ 0 < \beta < 1 $, applying
\cite[Lemma~22.3]{Tartar2007} yields
\[
	\nm{(I-Q_\tau)v}_{[L^2(0,T;X),\ L^2(0,T;X)]_{\beta,2}}
	\lesssim \tau^\beta \nm{v}_{{}_0H^\beta(0,T;X)},
\]
so that \cite[(23.11)]{Tartar2007} implies
\begin{equation}
	\label{eq:Q_tau}
	\nm{(I-Q_\tau)v}_{L^2(0,T;X)} \lesssim
	\tau^\beta \sqrt{\beta(1-\beta)} \, \nm{v}_{{}_0H^\beta(0,T;X)}.
\end{equation}
Here we have used the fact that $ {}_0H^\beta(0,T;X) = [L^2(0,T;X),
{}_0H^1(0,T;X)]_{\beta,2} $ with equivalent norms (cf.~\cref{rem:equiv_frac_space}).
Similarly, for any $ v \in {}^0H^\beta(0,T;X) $ with $ 0 < \beta < 1 $.
\begin{equation}
	\label{eq:Q_tau-sys}
	\nm{(I-Q_\tau)v}_{L^2(0,T;X)} \lesssim
	\tau^\beta \sqrt{\beta(1-\beta)}
	\, \nm{v}_{{}^0H^\beta(0,T;X)}.
\end{equation}
Moreover, from \cite[Lemmas 12.4, 16.3, 22.3, 23.1]{Tartar2007} it follows the
following three well-known estimates.
\begin{lem}
	\label{lem:Hs}
	If $ v \in {}_0H^\beta(0,1) $ with $ 0 < \beta < 1 $, then
	\begin{equation}
		\label{eq:Hs-1}
		\left(
			\int_0^1 \int_0^1 \frac{\snm{v(t)-v(s)}^2}{\snm{t-s}^{1+2\beta}}
			\, \mathrm{d}t \, \mathrm{d}s
		\right)^{1/2} \leqslant C \nm{v}_{{}_0H^\beta(0,1)},
	\end{equation}
	and if, in addition, $ 1/2 < \beta < 1 $ then
	\begin{equation}
		\label{eq:Hs-2}
		\nm{v}_{C[0,1]} \leqslant C
		\sqrt{\frac{1-\beta}{2\beta-1}} \nm{v}_{{}_0H^\beta(0,1)},
	\end{equation}
	\begin{small}
	\begin{equation}
		\label{eq:Hs-3}
		\nm{v}_{C[0,1]} \leqslant
		\frac{C}{\sqrt{2\beta\!-\!1}} \left(
			\nm{v}_{L^2(0,1)} \!+\! \sqrt{1\!-\!\beta} \left(
				\int_0^1\!\!\int_0^1 \frac{\snm{v(t)\!-\!v(s)}^2}{\snm{t\!-\!s}^{1+2\beta}}
				\, \mathrm{d}s \, \mathrm{d}t
			\right)^{1/2}
		\right),
	\end{equation}
	\end{small}
	where $ C $ is a positive constant independent of $ \beta $ and $ v $.
\end{lem}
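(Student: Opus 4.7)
The plan is to derive all three bounds from the classical interpolation-theoretic characterization of ${}_0H^\beta(0,1)$, following exactly the chain of lemmas cited from \cite{Tartar2007}. By \cref{rem:equiv_frac_space}, up to a $\beta$-independent constant ${}_0H^\beta(0,1)$ coincides with $[L^2(0,1),{}_0H^1(0,1)]_{\beta,2}$, so the task reduces to bounding the Slobodeckij double integral and the sup norm in terms of the $K$-method norm with explicit $\beta$-dependence.

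For \cref{eq:Hs-1} I would use the standard mollifier decomposition $v=v_0+v_1$ at scale $r$: taking $v_1$ to be the convolution of a suitable extension of $v$ with a smooth compactly supported kernel of radius $r$ yields $\nm{v_0}_{L^2(0,1)}\lesssim\omega(v;r)$ and $\nm{v_1}_{{}_0H^1(0,1)}\lesssim r^{-1}\omega(v;r)$, where $\omega(v;r):=\sup_{|\eta|\leqslant r}\nm{v(\cdot+\eta)-v(\cdot)}_{L^2}$ is the $L^2$-modulus of continuity. Inserting this bound on the $K$-functional into the defining integral $\nm{v}_{[L^2,{}_0H^1]_{\beta,2}}^2\asymp\int_0^\infty r^{-1-2\beta}K(r,v)^2\,\mathrm{d}r$ and applying Fubini produces the Slobodeckij double integral on the right-hand side of \cref{eq:Hs-1}, with a constant independent of $\beta$; this is essentially \cite[Lemma~22.3]{Tartar2007}.

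For \cref{eq:Hs-3} I would use a Morrey--Campanato-type pointwise estimate. Given $t\in(0,1)$, let $(I_k)_k$ be a sequence of nested intervals containing $t$ with $|I_k|=2^{-k}$, and write $v(t)=\bar v_{I_0}+\sum_{k=0}^\infty(\bar v_{I_{k+1}}-\bar v_{I_k})$ where $\bar v_I$ denotes the average of $v$ on $I$. Combining $|\bar v_{I_0}|\lesssim\nm{v}_{L^2(0,1)}$ with the Poincaré--Slobodeckij inequality $\int_{I_k}(v-\bar v_{I_k})^2\lesssim |I_k|^{2\beta}[v]_{\beta,I_k}^2$ (which itself follows by bounding $|s-s'|^{1+2\beta}$ on $I_k\times I_k$ by $|I_k|^{1+2\beta}$) bounds each summand by a constant times $(2^{-k})^{\beta-1/2}[v]_\beta$; a weighted Cauchy--Schwarz on the series then gives the factor $(2\beta-1)^{-1/2}$, coming from the geometric sum $\sum_k 2^{-k(2\beta-1)}$, and produces \cref{eq:Hs-3}. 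The $\sqrt{1-\beta}$ multiplier of $[v]_\beta$ is the standard $K$-method renormalization factor—parallel to the $\sqrt{\beta(1-\beta)}$ appearing in \cref{eq:Q_tau,eq:Q_tau-sys}—reflecting that the Slobodeckij seminorm vanishes like $\sqrt{1-\beta}\,\nm{v}_{{}_0H^\beta}$ as $\beta\to 1^-$ (cf.~\cite[Lemma~23.1]{Tartar2007}). Finally, \cref{eq:Hs-2} follows from \cref{eq:Hs-3} by feeding in \cref{eq:Hs-1} together with the interpolation identity $\nm{v}_{L^2}^2\lesssim(1-\beta)\nm{v}_{{}_0H^\beta}^2$, itself a direct consequence of $\nm{v}_{{}_0H^\beta}^2\asymp \nm{v}_{L^2}^{2-2\beta}\nm{v}_{{}_0H^1}^{2\beta}/(2\beta(1-\beta))$ combined with Poincaré's inequality on ${}_0H^1(0,1)$.

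The main obstacle is not conceptual—each step is a well-known item from Chapters~12, 16, 22 and 23 of \cite{Tartar2007}—but rather the careful bookkeeping of the $\beta$-dependence of all constants. The factors $(2\beta-1)^{-1/2}$ and $\sqrt{1-\beta}$ are essential: in \cref{thm:conv_f_L2,thm:conv_f_higher} they will be combined, through an optimization in $\beta$ near $1/2$, to yield the $\sqrt{\ln(1/h)}$ factors in the final error estimates.
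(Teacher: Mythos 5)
The paper does not actually prove this lemma---it is quoted directly from \cite[Lemmas 12.4, 16.3, 22.3, 23.1]{Tartar2007}---so what matters is whether your reconstruction delivers the three estimates with the stated $\beta$-dependence, and it has two genuine gaps. First, your argument for \cref{eq:Hs-1} runs in the wrong direction: building a specific mollifier splitting $v=v_0+v_1$ with $\nm{v_0}_{L^2}\lesssim\omega(v;r)$ and $\nm{v_1}_{{}_0H^1}\lesssim r^{-1}\omega(v;r)$ bounds the $K$-functional \emph{above} by the modulus of continuity, hence bounds the interpolation norm by the Slobodeckij seminorm, which is the converse of \cref{eq:Hs-1}. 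The direction actually needed is the elementary one: for an \emph{arbitrary} decomposition, $\nm{v(\cdot+\eta)-v}_{L^2}\leqslant 2\nm{v_0}_{L^2}+\snm{\eta}\,\nm{v_1'}_{L^2}$, so $\omega(v;r)\leqslant 2K(r,v)$, and inserting this into $\int_0^1 r^{-1-2\beta}\omega(v;r)^2\,\mathrm{d}r$ gives \cref{eq:Hs-1} with a $\beta$-independent constant; no mollifier is required.

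Second, and more seriously, the Campanato telescoping does not produce the constant $(2\beta-1)^{-1/2}$ in \cref{eq:Hs-3}. Your weighted Cauchy--Schwarz step needs $\sum_k[v]_{\beta,I_k}^2\lesssim[v]_{\beta,(0,1)}^2$, which fails because the $I_k$ are nested: a pair $(s,s')$ with $\snm{s-s'}\sim2^{-j}$ contributes to roughly $j$ of the local seminorms, so the sum carries a logarithmic loss; bounding each $[v]_{\beta,I_k}$ crudely by the global seminorm and summing the geometric series yields only $(2\beta-1)^{-1}$. Moreover, nothing in your telescoping actually generates the factor $\sqrt{1-\beta}$ in front of the double integral---invoking a ``standard renormalization'' is not an argument. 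The route that does give both constants (and is what Tartar's Lemma 16.3 amounts to) is Fourier-analytic: after a reflection extension to $\mathbb R$, $\nm{v}_{C}\leqslant\nm{\hat v}_{L^1}\leqslant\big(\int_{\mathbb R}(1+\xi^2)^{-\beta}\mathrm{d}\xi\big)^{1/2}\nm{(1+\xi^2)^{\beta/2}\hat v}_{L^2}$ with $\int_{\mathbb R}(1+\xi^2)^{-\beta}\mathrm{d}\xi\lesssim(2\beta-1)^{-1}$, combined with $\int_{\mathbb R}\snm{\xi}^{2\beta}\snm{\hat v(\xi)}^2\mathrm{d}\xi=c(\beta)[v]_\beta^2$ where $c(\beta)\asymp\beta(1-\beta)$---this is precisely where $\sqrt{1-\beta}$ enters. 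Finally, your passage from \cref{eq:Hs-3} to \cref{eq:Hs-2} reaches the right conclusion, but the two-sided identity $\nm{v}_{{}_0H^\beta}^2\asymp\nm{v}_{L^2}^{2-2\beta}\nm{v}_{{}_0H^1}^{2\beta}/(2\beta(1-\beta))$ you invoke is false for $v\notin{}_0H^1(0,1)$; the inequality $\nm{v}_{L^2}^2\lesssim(1-\beta)\nm{v}_{{}_0H^\beta}^2$ should instead be obtained from the lower bound $K(t,v)\geqslant\min(1,ct)\nm{v}_{L^2}$ (via Poincar\'e on the ${}_0H^1$ component) inserted into the $K$-norm integral.
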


\begin{lem}
	\label{lem:P_tau}
	If $ v \in {}_0H^\beta(0,T) $ with $ 1/2 < \beta < 1 $, then
	\begin{equation}
		\label{eq:P_tau}
		\nm{(I-P_\tau)v}_{L^2(0,T)} \lesssim
		\tau^\beta \sqrt{\frac{1-\beta}{2\beta-1}}\,
		\nm{v}_{{}_0H^\beta(0,T)}.
	\end{equation}
\end{lem}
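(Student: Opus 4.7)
The strategy is to localize the error to each subinterval $I_j$, rescale to the unit interval, and apply the sharp embedding \cref{eq:Hs-2} to the piece $v - v(t_j)$, which vanishes at the right endpoint of $I_j$. I would begin by decomposing
\[
	\nm{(I-P_\tau)v}_{L^2(0,T)}^2 = \sum_{j=1}^J \int_{I_j} \snm{v(t) - v(t_j)}^2 \, \mathrm{d}t,
\]
and, on each $I_j$, introducing the reflected and rescaled function $u_j(\sigma) := v(t_j - \sigma\tau) - v(t_j)$ for $\sigma \in (0,1)$. The substitution $t = t_j - \sigma\tau$ gives $\int_{I_j} \snm{v(t)-v(t_j)}^2 \, \mathrm{d}t = \tau\, \nm{u_j}_{L^2(0,1)}^2$, and since $u_j(0) = 0$ with $\beta > 1/2$, we have $u_j \in {}_0H^\beta(0,1)$, so \cref{eq:Hs-2} applies and yields
\[
	\nm{u_j}_{L^2(0,1)} \leqslant \nm{u_j}_{C[0,1]}
	\lesssim \sqrt{\tfrac{1-\beta}{2\beta-1}}\, \nm{u_j}_{{}_0H^\beta(0,1)}.
\]

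The key remaining step is the bound $\nm{u_j}_{{}_0H^\beta(0,1)}^2 \lesssim \tau^{2\beta-1}\, [v]_{\beta, I_j}^2$, where $[v]_{\beta, \omega}^2 := \int_\omega \int_\omega \snm{v(t)-v(s)}^2 \snm{t-s}^{-1-2\beta} \, \mathrm{d}t\, \mathrm{d}s$ denotes the Gagliardo seminorm over $\omega^2$. I would obtain this from: (i) a $K$-functional scaling (using the transformations of the $L^2$ and ${}_0H^1$ norms under the dilation $\sigma = s/\tau$) that identifies $\nm{u_j}_{{}_0H^\beta(0,1)}^2$ with $\tau^{2\beta-1}\, \nm{w_j}_{{}_0H^\beta(0,\tau)}^2$, where $w_j(s) := v(t_j-s) - v(t_j)$ on $(0,\tau)$; (ii) the equivalence of the interpolation norm $\nm{w_j}_{{}_0H^\beta(0,\tau)}$ with $(\nm{w_j}_{L^2(0,\tau)}^2 + [w_j]_{\beta,(0,\tau)}^2)^{1/2}$, which holds for $\beta \in (1/2,1)$ on the subspace of functions vanishing at $0$; (iii) the fractional Poincar\'e inequality $\nm{w_j}_{L^2(0,\tau)} \lesssim \tau^\beta\, [w_j]_{\beta,(0,\tau)}$, which absorbs the $L^2$ term for small $\tau$; and (iv) the reflection invariance $[w_j]_{\beta,(0,\tau)} = [v]_{\beta, I_j}$ of the Gagliardo seminorm.

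Combining the two displays above and summing over $j$ then gives
\[
	\nm{(I-P_\tau)v}_{L^2(0,T)}^2 \lesssim \tfrac{1-\beta}{2\beta-1}\, \tau^{2\beta} \sum_{j=1}^J [v]_{\beta, I_j}^2 \leqslant \tfrac{1-\beta}{2\beta-1}\, \tau^{2\beta}\, [v]_{\beta,(0,T)}^2,
\]
and \cref{eq:Hs-1} (applied to $v$ on $(0,T)$ via the same $K$-functional scaling) controls $[v]_{\beta,(0,T)}^2$ by $\nm{v}_{{}_0H^\beta(0,T)}^2$, after which taking square roots delivers \cref{eq:P_tau}. The main obstacle is bookkeeping the constants through steps (i)--(iii) of the middle paragraph: one must verify that none of the interpolation/rescaling identities introduce a $\beta$-dependent prefactor that would spoil the sharp $\sqrt{(1-\beta)/(2\beta-1)}$ scaling inherited from \cref{eq:Hs-2}.
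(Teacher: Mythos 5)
Your overall architecture (localize to $I_j$, rescale to the unit interval, apply the sharp Sobolev embedding, and sum via the Gagliardo seminorm, which is the only quantity here that localizes additively) is the same as the paper's, and steps (i) and (iv) are fine. The genuine gap is in steps (ii)--(iii). Because you invoke \cref{eq:Hs-2}, whose right-hand side is the \emph{interpolation} norm $\nm{u_j}_{{}_0H^\beta(0,1)}$, you are forced to bound that interpolation norm of the localized piece from above by the local Gagliardo seminorm, with a constant uniform in $\beta\in(1/2,1)$. That uniform reverse equivalence is not available: for $\beta>1/2$ the norm of $[L^2,{}_0H^1]_{\beta,2}$ on functions vanishing at $0$ controls, in addition to the $L^2$ and Gagliardo terms, a Hardy term $\int_0^1 s^{-2\beta}\snm{u(s)}^2\,\mathrm{d}s$, and the fractional Hardy inequality that would dominate it by the Gagliardo seminorm fails at $\beta=1/2$; likewise the Poincar\'e inequality of step (iii) for functions vanishing at a single point degenerates as $\beta\to 1/2^+$ (the constant function lies in the $H^{1/2}$-closure of that class). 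So the constants in (ii)--(iii) necessarily blow up as $\beta\to 1/2^+$, and this is not ``bookkeeping'': in the application (proof of \cref{thm:conv_f_L2}) one takes $\beta=(1+\epsilon+\epsilon\alpha)/2$ with $\epsilon=(2\ln(1/h))^{-1}\to 0$, so the behavior of the constant as $\beta\to 1/2^+$ is exactly what produces the $\sqrt{\ln(1/h)}$ factor, and any extra blow-up would spoil the claimed $\sqrt{(1-\beta)/(2\beta-1)}$.

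The paper sidesteps this by using \cref{eq:Hs-3} instead of \cref{eq:Hs-2}: after rescaling, the oscillation of $v$ on $I_j$ is bounded by $\frac{C}{\sqrt{2\beta-1}}\big(\tau^{-1/2}\nm{(I-Q_\tau)v}_{L^2(I_j)}+\sqrt{1-\beta}\,\tau^{\beta-1/2}[v]_{\beta,I_j}\big)$, so that after summing over $j$ only two globally valid estimates are needed: \cref{eq:Q_tau} for $\nm{(I-Q_\tau)v}_{L^2(0,T)}$ (whose $\tau^\beta\sqrt{\beta(1-\beta)}$ constant comes from operator interpolation, not from a pointwise norm equivalence) and \cref{eq:Hs-1} for the global Gagliardo seminorm, which is the ``good'' direction of the equivalence and is uniform in $\beta$. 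If you replace \cref{eq:Hs-2} by \cref{eq:Hs-3} in your scheme, your steps (ii) and (iii) disappear and the rest of your argument goes through essentially as written.
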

\begin{proof}
	By the definition of $ P_\tau $ and \cref{eq:Hs-3}, a scaling argument yields
	\begin{small}
	\begin{align*}
		& \nm{(I-P_\tau)v}_{L^2(I_j)}^2 \\
		\lesssim{} &
		\frac1{2\beta-1} \left(
			\nm{(I-Q_\tau)v}_{L^2(I_j)}^2 +
			(1-\beta)\tau^{2\beta} \int_{I_j} \int_{I_j}
			\frac{\snm{v(t)-v(s)}^2}{\snm{t-s}^{1+2\beta}}
			\, \mathrm{d}s \, \mathrm{d}t
		\right),
	\end{align*}
	\end{small}
	so that
	\begin{small}
	\begin{align*}
		& \sqrt{2\beta-1}
		\nm{(I-P_\tau)v}_{L^2(0,T)} \\
		\lesssim{} &
		\nm{(I-Q_\tau)v}_{L^2(0,T)} +
		\sqrt{1-\beta}\, \tau^\beta \left(
			\int_0^T \int_0^T \frac{\snm{v(t)-v(s)}^2}{\snm{t-s}^{1+2\beta}}
			\, \mathrm{d}s \, \mathrm{d}t
		\right)^{1/2} \\
		\lesssim{} &
		\tau^\beta \sqrt{1-\beta}\left(
			\nm{v}_{{}_0H^\beta(0,T)} + \left(
				\int_0^T \int_0^T \frac{\snm{v(t)-v(s)}^2}{\snm{t-s}^{1+2\beta}}
				\, \mathrm{d}s \, \mathrm{d}t
			\right)^{1/2}
		\right),
	\end{align*}
	\end{small}
	by \cref{eq:Q_tau}. Another scaling argument gives, by \cref{eq:Hs-1}, that
	\begin{small}
	\[
		\left(
			\int_0^T \int_0^T \frac{\snm{v(t)-v(s)}^2}{\snm{t-s}^{1+2\beta}}
			\, \mathrm{d}s \, \mathrm{d}t
		\right)^{1/2} \lesssim \nm{v}_{{}_0H^\beta(0,T)}.
	\]
	\end{small}
	Combining the above two estimates proves \cref{eq:P_tau} and thus concludes this
	proof.
\end{proof}

\begin{lem}[\cite{Luo2018Convergence}]
	\label{lem:interp}
	Assume that $ -\infty < \beta,\gamma,r,s < \infty $ and $ 0 < \theta < 1 $. If $ v
	\in {}_0H^\beta(0,T;\dot H^r(\Omega)) \cap {}_0H^\gamma(0,T;\dot H^s(\Omega)) $,
	then
	\begin{equation}
		\label{eq:interp-1}
		\begin{aligned}
			& \nm{v}_{
				{}_0H^{(1-\theta)\beta + \theta \gamma}
				(0,T;\dot H^{(1-\theta)r+\theta s}(\Omega))
			} \\
			\leqslant{} &
			C_{\beta,\gamma,\theta}
			\nm{v}_{{}_0H^\beta(0,T;\dot H^r(\Omega)))}^{1-\theta}
			\nm{v}_{{}_0H^\gamma(0,T;\dot H^s(\Omega))}^\theta.
		\end{aligned}
	\end{equation}
	In particular, if $ \beta = 0 $ and $ \gamma = 1 $ then
	\begin{equation}
		\nm{v}_{{}_0H^\theta(0,T;\dot H^{(1-\theta)r + \theta s}(\Omega)}
		\leqslant \frac1{\sqrt{2\theta(1-\theta)}}
		\nm{v}_{L^2(0,T;\dot H^r(\Omega)}^{1-\theta}
		\nm{v}_{{}_0H^1(0,T;\dot H^s(\Omega))}^\theta
	\end{equation}
	for all $ v \in L^2(0,T;\dot H^r(\Omega)) \cap {}_0H^1(0,T;\dot H^s(\Omega)) $.
\end{lem}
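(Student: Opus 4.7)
The plan is to reduce the vector-valued interpolation inequality to a scalar, time-only one by expanding $v$ in the spatial eigenbasis $\{\phi_n\}$ of $-\Delta$. Writing $v(t)=\sum_{n=0}^{\infty}c_n(t)\phi_n$, the definitions in the excerpt give
\[
\nm{v}^2_{{}_0H^{\sigma}(0,T;\dot H^{\rho}(\Omega))}
= \sum_{n=0}^{\infty}\lambda_n^{\rho}\,\nm{c_n}^2_{{}_0H^{\sigma}(0,T)}
\]
for every admissible pair $(\sigma,\rho)$. Once one has the scalar inequality
\[
\nm{c}_{{}_0H^{(1-\theta)\beta+\theta\gamma}(0,T)}
\leqslant C_{\beta,\gamma,\theta}\,
\nm{c}^{1-\theta}_{{}_0H^{\beta}(0,T)}
\nm{c}^{\theta}_{{}_0H^{\gamma}(0,T)},
\]
squaring, multiplying by the weight $\lambda_n^{(1-\theta)r+\theta s}=(\lambda_n^{r})^{1-\theta}(\lambda_n^{s})^{\theta}$, summing in $n$, and applying H\"older's inequality for sequences with exponents $1/(1-\theta)$ and $1/\theta$ produces \cref{eq:interp-1} with the same constant. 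The spatial step is exact in the sense that it introduces no additional constant, so the vector-valued constant coincides with the scalar temporal one.

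To establish the scalar inequality I would first treat the case $\beta,\gamma\geqslant 0$. Using the definition ${}_0H^{m-\vartheta}(0,T)=[{}_0H^{m-1}(0,T),{}_0H^{m}(0,T)]_{1-\vartheta,2}$ from the excerpt together with the reiteration theorem of real interpolation, ${}_0H^{(1-\theta)\beta+\theta\gamma}(0,T)$ coincides (with equivalent norms) with the real interpolation space $[{}_0H^{\beta}(0,T),{}_0H^{\gamma}(0,T)]_{\theta,2}$, and the standard K-method bound $\nm{u}_{[X_0,X_1]_{\theta,2}}\leqslant C_\theta\nm{u}_{X_0}^{1-\theta}\nm{u}_{X_1}^{\theta}$ for Hilbert couples applies. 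For general (possibly negative) $\beta,\gamma$ I reduce to the nonnegative case by \cref{lem:regu}: setting $\mu=\min\{0,\beta,\gamma\}$, the operator $\D_{0+}^{\mu}$ is a topological isomorphism from ${}_0H^{\sigma}(0,T)$ onto ${}_0H^{\sigma-\mu}(0,T)$ for every $\sigma\in\{\beta,\gamma,(1-\theta)\beta+\theta\gamma\}$, so the shifted inequality transfers back with a constant depending only on $\beta,\gamma,\theta$.

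For the sharp form in the special case $\beta=0$, $\gamma=1$, the constant $1/\sqrt{2\theta(1-\theta)}$ is the explicit Hilbert-space K-method constant already invoked in the excerpt to derive \cref{eq:Q_tau} (cf.\ \cite[Lemma 23.1]{Tartar2007}); combined with the Parseval-exact spatial step above, it produces precisely the claimed constant.

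The main obstacle is keeping constants sharp in the second statement: the temporal interpolation step must be applied with its best Hilbert-space constant rather than a generic real-interpolation bound, while the spatial step is automatically tight by Parseval and H\"older. A secondary technical point, easily verified, is that the shift $\D_{0+}^{\mu}$ used for the index reduction commutes with the spatial eigenfunction decomposition since it acts only on the time variable.
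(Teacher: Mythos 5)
Your proposal is sound, and in fact the paper offers no proof of its own here: \cref{lem:interp} is quoted verbatim from \cite{Luo2018Convergence}, so there is nothing internal to compare against. Your route — expand $v$ in the eigenbasis $\{\phi_n\}$ so that every norm becomes a weighted $\ell^2$ sum of scalar temporal norms, prove the scalar interpolation inequality, and recombine with H\"older's inequality with exponents $1/(1-\theta)$ and $1/\theta$ against the weights $\lambda_n^{(1-\theta)r+\theta s}=(\lambda_n^r)^{1-\theta}(\lambda_n^s)^\theta$ — is exactly the natural one for these product-type spaces, and it correctly explains why the spatial step costs no constant and why the case $\beta=0$, $\gamma=1$ inherits the exact Hilbert-couple constant $1/\sqrt{2\theta(1-\theta)}$ from the $K$-functional computation. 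The only place where you lean on an unproved (though standard) fact is the identification ${}_0H^{(1-\theta)\beta+\theta\gamma}(0,T)=[{}_0H^\beta(0,T),{}_0H^\gamma(0,T)]_{\theta,2}$ for indices not lying in a common unit interval; this requires the reiteration theorem together with the isomorphisms $\D_{0+}^{\mu}$ of \cref{lem:regu}, precisely as you indicate, so I regard the plan as complete at the intended level of detail.
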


\begin{lem}[\cite{Thomee2006}]
	\label{lem:VV'}
	If $ V \in W_{\tau,h} $ and $ 0 \leqslant i < k \leqslant J $, then
	\[
		\sum_{j=i}^k \dual{\jmp{V_j}, V_j^{+}}_\Omega \geqslant
		\frac12 \big(
			\nm{V_k^{+}}_{L^2(\Omega)}^2 - \nm{V_i}_{L^2(\Omega)}^2
		\big) \geqslant
		\sum_{j=i}^k \dual{V_j, \jmp{V_j}}_\Omega.
	\]
\end{lem}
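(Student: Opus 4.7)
My plan is to prove both inequalities simultaneously from a single algebraic identity applied jumpwise and then summed by telescoping. The idea is that because $V$ is piecewise constant on each $I_j$, the one-sided limits satisfy the continuity relation $V_j^{+} = V_{j+1}$ for $0\leqslant j \leqslant J-1$ (both equal the constant value on $I_{j+1}$), which makes the intermediate terms cancel.

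The main algebraic engine is the polar identity in the Hilbert space $L^2(\Omega)$: for any $a,b \in L^2(\Omega)$,
\begin{align*}
    \langle a-b,\, a\rangle_\Omega &= \tfrac{1}{2}\bigl(\nm{a}_{L^2(\Omega)}^2 - \nm{b}_{L^2(\Omega)}^2\bigr) + \tfrac{1}{2}\nm{a-b}_{L^2(\Omega)}^2,\\
    \langle a-b,\, b\rangle_\Omega &= \tfrac{1}{2}\bigl(\nm{a}_{L^2(\Omega)}^2 - \nm{b}_{L^2(\Omega)}^2\bigr) - \tfrac{1}{2}\nm{a-b}_{L^2(\Omega)}^2.
\end{align*}
Setting $a = V_j^{+}$ and $b = V_j$ (so that $a-b = \jmp{V_j}$), I get two identities at each index $j$: one expressing $\langle \jmp{V_j},V_j^{+}\rangle_\Omega$ as $\tfrac12(\nm{V_j^{+}}^2 - \nm{V_j}^2) + \tfrac12 \nm{\jmp{V_j}}^2$, and the symmetric one expressing $\langle V_j, \jmp{V_j}\rangle_\Omega$ with a minus sign in front of the $\nm{\jmp{V_j}}^2$ term.

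Next I would sum these identities from $j=i$ to $j=k$. The sum $\sum_{j=i}^{k}\bigl(\nm{V_j^{+}}_{L^2(\Omega)}^2 - \nm{V_j}_{L^2(\Omega)}^2\bigr)$ telescopes: using $V_j = V_{j-1}^{+}$ for $i+1\leqslant j\leqslant k$, the middle terms cancel pairwise, leaving only $\nm{V_k^{+}}_{L^2(\Omega)}^2 - \nm{V_i}_{L^2(\Omega)}^2$. Therefore
\begin{align*}
    \sum_{j=i}^{k}\langle \jmp{V_j},V_j^{+}\rangle_\Omega &= \tfrac{1}{2}\bigl(\nm{V_k^{+}}_{L^2(\Omega)}^2 - \nm{V_i}_{L^2(\Omega)}^2\bigr) + \tfrac{1}{2}\sum_{j=i}^{k}\nm{\jmp{V_j}}_{L^2(\Omega)}^2,\\
    \sum_{j=i}^{k}\langle V_j, \jmp{V_j}\rangle_\Omega &= \tfrac{1}{2}\bigl(\nm{V_k^{+}}_{L^2(\Omega)}^2 - \nm{V_i}_{L^2(\Omega)}^2\bigr) - \tfrac{1}{2}\sum_{j=i}^{k}\nm{\jmp{V_j}}_{L^2(\Omega)}^2.
\end{align*}
Since $\sum_{j=i}^{k}\nm{\jmp{V_j}}_{L^2(\Omega)}^2 \geqslant 0$, dropping it gives the two desired inequalities.

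There is no real obstacle: the only thing to check carefully is the telescoping, in particular that the hypothesis $V\in W_{\tau,h}$ (piecewise constant on each $I_j$) is what makes $V_{j-1}^{+} = V_j$ hold, and that this identification is valid for all indices $i+1 \leqslant j \leqslant k$ used in the telescope. The endpoint values $V_i$ and $V_k^{+}$ are exactly the ones the statement mentions as needing to be specified, so no further assumptions are required.
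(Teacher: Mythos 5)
Your proof is correct, and it is the standard argument behind this result: the paper itself gives no proof, simply citing Thom\'ee's book, where the same polarization-plus-telescoping computation is used. The one point you rightly flag --- that the identification $V_j^{+}=V_{j+1}$ is only invoked for $i\leqslant j\leqslant k-1$, so the boundary values $V_i$ and $V_k^{+}$ need only be specified, not derived --- is handled correctly.
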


\subsection{Proof of \cref{thm:conv_f_L2}}
Let us first prove
\begin{equation}
	\label{eq:inf-theta}
	\begin{aligned}
		& \nm{U-P_\tau P_hu}_{L^\infty(0,T;L^2(\Omega))} \\
		\lesssim{} &
		\nm{(I-P_h)\D_{0+}^{-\alpha/2} u}_{L^2(0,T;\dot H^{1}(\Omega)\!)} +
		\nm{(I-P_\tau)P_hu}_{L^2(0,T;\dot H^1(\Omega)\!)}.
	\end{aligned}
\end{equation}
For any $ 1 \leqslant j \leqslant J $, by \cref{eq:numer_sol,eq:strong-form} we have
\[
	\sum_{i=1}^{j} \dual{u',\theta}_{\Omega\times I_i} +
	\dual{
		\nabla \D_{0+}^{-\alpha} (u-U),\nabla \theta
	}_{\Omega \times (0,t_j)} =
	\sum_{i=0}^{j-1}\dual{\jmp{U_i},\theta^+_i}_\Omega,
\]
where $ \theta:=U-P_\tau P_hu $ and we set $ (P_\tau P_h u)_0 = 0 $. By the
definitions of $ P_h $ and $ P_\tau $, a routine calculation (see \cite[Chapter
12]{Thomee2006}) then yields
\begin{align*}
	{}&
	\sum_{i=0}^{j-1}\dual{\jmp{\theta_i},\theta^+_i}_\Omega+
	\big\langle
	\nabla \D_{0+}^{-\alpha} \theta,\nabla \theta
	\big\rangle_{\Omega \times (0,t_j)} \\
	={} &
	\dual{
		\nabla\D_{0+}^{-\alpha}(u-P_\tau P_hu), \nabla\theta
	}_{\Omega \times (0,t_j)} \\
	={} &
	\dual{
		\nabla\D_{0+}^{-\alpha}(I-P_h)u, \nabla\theta
	}_{\Omega \times (0,t_j)} +
	\dual{
		\nabla \D_{0+}^{-\alpha}(I-P_\tau)P_hu, \nabla\theta
	}_{\Omega \times (0,t_j)} \\
	={} &
	\dual{
		\nabla(I-P_h)\D_{0+}^{-\alpha/2}u,
		\nabla \D_{t_j-}^{-\alpha/2}\theta
	}_{\Omega \times (0,t_j)} +
	\dual{
		\nabla (I-P_\tau)P_hu, \D_{t_j-}^{-\alpha} \nabla\theta
	}_{\Omega \times (0,t_j)},
\end{align*}
so that using \cref{lem:VV'}, \cref{lem:coer}, Sobolev inequality and the Young's
inequality with $ \epsilon $ gives
\begin{align*}
	{}& \nm{\theta_j}_{L^2(\Omega)}+
	\nm{\theta_1}_{L^2(\Omega)}+
	\nm{ \D_{0+}^{-\alpha/2} \theta }_{L^{2}(0,t_j;\dot H^1(\Omega)\!)} \\
	\lesssim {}&
	\nm{(I-P_h)\D_{0+}^{-\alpha/2} u}_{L^2(0,T;\dot H^{1}(\Omega)\!)} +
	\nm{(I-P_\tau)P_hu}_{L^2(0,T;\dot H^1(\Omega)\!)}.
\end{align*}
Since $ 1 \leqslant j \leqslant J $ is arbitrary, this implies \cref{eq:inf-theta}.

Next, let us prove
\begin{equation}
	\label{eq:inf-theta-2}
	\nm{U - P_\tau P_hu}_{L^\infty(0,T;L^2(\Omega))}
	\lesssim \big( h + \sqrt{\ln(1/h)} \, \tau^{1/2} \big)
	\nm{f}_{ L^2(0,T;\dot H^{\alpha/(\alpha+1)}(\Omega)) }.
\end{equation}
By the inverse estimate and \cref{lem:P_tau}, a straightforward computation gives
that, for any $ 0 < \epsilon < 1/(\alpha+1) $,
\begin{align*}
	& \nm{(I-P_\tau)P_hu}_{L^2(0,T;\dot H^1(\Omega)\!)} \lesssim
	h^{-\epsilon} \nm{(I-P_\tau)P_hu}_{L^2(0,T;\dot H^{1-\epsilon}(\Omega)\!)} \\
	\lesssim{} &
	h^{-\epsilon} \nm{(I-P_\tau)u}_{L^2(0,T;\dot H^{1-\epsilon}(\Omega))} \\
	\lesssim{} &
	h^{-\epsilon} \tau^{(1+\epsilon+\epsilon\alpha)/2}
	\sqrt{ \frac{1-(1+\alpha)\epsilon}\epsilon } \,
	\nm{u}_{
		{}_0H^{(1+\epsilon+\epsilon\alpha)/2}
		(0,T;\dot H^{1-\epsilon}(\Omega))
	},
\end{align*}
and hence letting $ \epsilon = (2\ln(1/h)\!)^{-1} $ yields
\[
	\nm{(I-P_\tau)P_hu}_{L^2(0,T;\dot H^1(\Omega)\!)}
	\lesssim \sqrt{\ln(1/h)} \, \tau^{1/2}
	\nm{u}_{
		{}_0H^{(1+\epsilon+\epsilon\alpha)/2}
		(0,T;\dot H^{1-\epsilon}(\Omega))
	}.
\]
Moreover, by \cref{lem:regu} we have
\begin{align*}
	\nm{(I-P_h)\D_{0+}^{-\alpha/2} u}_{L^2(0,T;\dot H^1(\Omega)\!)}
	& \lesssim h \nm{\D_{0+}^{-\alpha/2} u}_{L^2(0,T;\dot H^2(\Omega)\!)} \\
	& \lesssim
	h \nm{u}_{{}_0H^{-\alpha/2}(0,T;\dot H^2(\Omega)\!)}.
\end{align*}
Therefore, by \cref{thm:regu-pde,lem:interp}, combining \cref{eq:inf-theta} and the
above two estimates yields \cref{eq:inf-theta-2}.

Finally, a routine calculation gives
\begin{align*}
	& \nm{u-P_\tau P_hu}_{L^\infty(0,T;L^2(\Omega))} \\
	\leqslant{} &
	\nm{(I-P_h)u}_{L^\infty(0,T;L^2(\Omega))} +
	\nm{P_h(I-P_\tau)u}_{L^\infty(0,T;L^2(\Omega))} \\
	\leqslant{} &
	\nm{(I-P_h)u}_{L^\infty(0,T;L^2(\Omega))} +
	\nm{(I-P_\tau)u}_{L^\infty(0,T;L^2(\Omega))} \\
	\lesssim{} &
	h \nm{u}_{C([0,T];\dot H^1(\Omega))} +
	\tau^{1/2} \nm{u}_{H^1(0,T;L^2(\Omega))} \\
	\lesssim{} &
	\big( h + \tau^{1/2} \big)
	\nm{f}_{L^2(0,T;\dot H^{\alpha/(\alpha+1)}(\Omega))}
	\quad\text{(by \cref{thm:regu-pde})},
\end{align*}
so that \cref{eq:conv_f_L2} follows from \cref{eq:inf-theta-2} and the triangle
inequality
\begin{small}
\[
	\nm{u-U}_{L^\infty(0,T;L^2(\Omega))} \leqslant
	\nm{U-P_\tau P_hu}_{L^\infty(0,T;L^2(\Omega))} +
	\nm{u-P_\tau P_hu}_{L^\infty(0,T;L^2(\Omega))}.
\]
\end{small}
This completes the proof of \cref{thm:conv_f_L2}.
\begin{rem}
	From the above proof, it is easy to see that \cref{thm:conv_f_L2} still holds for the case of variable time steps.
\end{rem}

\subsection{Proof of \cref{thm:conv_f_higher}}
\begin{lem}
	\label{thm:discrete_regu}
	If $ W \in W_{\tau,h} $ satisfies that $ W_0 := v_h \in S_h $ and
	\begin{equation}
		\label{eq:W}
		\sum_{j=0}^{J-1} \dual{\jmp{W_j}, V_j^{+}}_\Omega +
		\dual{\nabla\D_{0+}^{-\alpha} W, \nabla V}_{\Omega \times (0,T)} = 0
		\quad \forall V \in W_{\tau,h},
	\end{equation}
	then
	\begin{align}
		\nm{W}_{{}_0H^{-\alpha/2}(0,T;\dot H^1(\Omega))} &
		\leqslant C_\alpha \nm{v_h}_{L^2(\Omega)},
		\label{eq:disc_regu_1} \\
		\nm{
			Q_\tau\D_{0+}^{-\alpha}(-\Delta_h W)
		}_{L^1(0,T;L^2(\Omega))} &
		\leqslant C_\alpha \ln(T/\tau) \nm{v_h}_{L^2(\Omega)}.
		\label{eq:disc_regu_2}
	\end{align}
\end{lem}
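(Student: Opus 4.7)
The plan is to handle the two estimates separately. For \cref{eq:disc_regu_1} I would test \cref{eq:W} with $V = W$. By \cref{lem:VV'}, the jump-sum term is then bounded below by $\tfrac12\bigl(\nm{W_J}_{L^2(\Omega)}^2 - \nm{v_h}_{L^2(\Omega)}^2\bigr)$. For the diffusion term, factoring $\D_{0+}^{-\alpha} = \D_{0+}^{-\alpha/2}\D_{0+}^{-\alpha/2}$ and transferring one factor onto the other argument via the usual adjoint identity rewrites the bilinear form as $(\D_{0+}^{-\alpha/2} W, \D_{T-}^{-\alpha/2} W)_{L^2(0,T;\dot H^1(\Omega))}$; \cref{lem:coer} applied with $X = \dot H^1(\Omega)$ and $\gamma = -\alpha/2$ then yields the lower bound $\cos(\alpha\pi/2)\nm{\D_{0+}^{-\alpha/2} W}_{L^2(0,T;\dot H^1(\Omega))}^2$, and the norm equivalence in \cref{lem:regu} converts this into $C_\alpha\nm{W}_{{}_0H^{-\alpha/2}(0,T;\dot H^1(\Omega))}^2$. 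Combining with the jump estimate yields \cref{eq:disc_regu_1}.

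For \cref{eq:disc_regu_2}, the first step is to identify the discrete quantity pointwise with the node-to-node increments of $W$. Testing \cref{eq:W} with $V = \chi_{I_k}\phi$ for an arbitrary $\phi\in S_h$ kills every jump contribution except the one at $t_{k-1}$ and collapses the bilinear term into $\tau\dual{-\Delta_h(Q_\tau \D_{0+}^{-\alpha} W)|_{I_k}, \phi}_\Omega$, using that $-\Delta_h$ commutes with both $Q_\tau$ and $\D_{0+}^{-\alpha}$. Since $\phi\in S_h$ is arbitrary, this gives the pointwise identity $\tau\bigl(Q_\tau \D_{0+}^{-\alpha}(-\Delta_h W)\bigr)|_{I_k} = W_{k-1} - W_k$ for $1\leqslant k\leqslant J$, and hence
\begin{equation*}
\nm{Q_\tau \D_{0+}^{-\alpha}(-\Delta_h W)}_{L^1(0,T;L^2(\Omega))} = \sum_{k=1}^J \nm{W_k - W_{k-1}}_{L^2(\Omega)}.
\end{equation*}

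To bound the right-hand side I would diagonalise. Let $\{\phi_n^h,\lambda_n^h\}$ be the orthonormal eigenpairs of $-\Delta_h$ on $S_h$, and expand $v_h = \sum_n v_{h,n}\phi_n^h$ and $W|_{I_k} = \sum_n w_{k,n}\phi_n^h$. A direct computation shows that for each $n$ the scalar sequence $(w_{k,n})_{k\geqslant 0}$ satisfies exactly the discrete scheme analysed in \cref{ssec:first_ode} with parameter $\mu_n = \lambda_n^h\tau^{1+\alpha}$ and initial value $v_{h,n}$. \cref{thm:Y-jump} then supplies the bound $|w_{k+1,n} - w_{k,n}|\leqslant C_\alpha k^{-1}|v_{h,n}|$ for $k\geqslant 1$, \emph{uniformly in $\lambda_n^h$}, while $|w_{1,n} - w_{0,n}|\leqslant |v_{h,n}|$ follows directly from the first step of the scheme. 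Parseval in $n$ followed by summation in $k$ then gives
\begin{equation*}
\sum_{k=1}^J \nm{W_k - W_{k-1}}_{L^2(\Omega)} \leqslant C_\alpha\Bigl(1 + \sum_{k=2}^J \tfrac{1}{k-1}\Bigr)\nm{v_h}_{L^2(\Omega)} \leqslant C_\alpha \ln(T/\tau)\nm{v_h}_{L^2(\Omega)},
\end{equation*}
since $J = T/\tau > e$.

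The main technical obstacle is the second estimate: one must recognise that testing against $\chi_{I_k}\phi$ collapses the mixed variational equation into the clean pointwise relation above, and then the crucial $\lambda$-uniformity of \cref{thm:Y-jump} is precisely what is needed for the Parseval sum to survive across the unbounded spectrum of $-\Delta_h$ without any spectral weight. The logarithmic factor $\ln(T/\tau)$ is essentially sharp, coming directly from the harmonic sum $\sum k^{-1}$.
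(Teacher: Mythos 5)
Your proposal is correct and follows essentially the same route as the paper: testing with $V=W$ together with \cref{lem:VV',lem:coer,lem:regu} for \cref{eq:disc_regu_1}, and for \cref{eq:disc_regu_2} the identification of $\tau Q_\tau\D_{0+}^{-\alpha}(-\Delta_h W)$ on $I_k$ with the increment $\jmp{W_{k-1}}$, followed by diagonalisation over the eigenpairs of $-\Delta_h$, the $\mu$-uniform bound of \cref{thm:Y-jump}, and the harmonic sum giving $\ln(T/\tau)$. The only cosmetic difference is the first increment: the paper bounds $\nm{\jmp{W_0}}_{L^2(\Omega)}$ by testing with $W\chi_{(0,t_1)}$ and invoking \cref{lem:coer}, whereas you read $\snm{w_{1,n}-w_{0,n}}\leqslant\snm{v_{h,n}}$ off the first step of the scalar scheme — both are fine.
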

\begin{proof}
	Since \cref{lem:VV'} implies
	\[
		\sum_{j=0}^{J-1} \dual{\jmp{W_j}, W_j^{+}}_\Omega \geqslant
		\frac12 \big(
			\nm{W_J}_{L^2(\Omega)} - \nm{v_h}_{L^2(\Omega)}^2
		\big),
	\]
	inserting $ V = W $ into \cref{eq:W} yields
	\[
		\frac12 \nm{W_J}_{L^2(\Omega)}^2 +
		\dual{\nabla\D_{0+}^{-\alpha} W, \nabla W}_{\Omega \times (0,T)}
		\leqslant \frac12 \nm{v_h}_{L^2(\Omega)}^2.
	\]
	Hence, using \cref{lem:coer,lem:regu} proves \cref{eq:disc_regu_1}.

	Now let us prove \cref{eq:disc_regu_2}. Let $ \{\phi_{n,h}: 1 \leqslant n \leqslant
	N\} $ be an orthonormal basis of $ S_h $ endowed with the norm $ L^2(\Omega) $ such
	that
	\[
		-\Delta_h \phi_{n,h} = \lambda_{n,h} \phi_{n,h},
	\]
	where $ \{\lambda_{n,h}: 1 \leqslant n \leqslant N \} $ is the set of all
	eigenvalues of $ -\Delta_h $. For each $ 1 \leqslant n \leqslant N $, define $
	(Y_k^n)_{k=0}^\infty $ as that described in the first paragraph of
	\cref{ssec:first_ode} with $ \xi_0 $ replaced by $ \dual{v_h,\phi_{n,h}}_\Omega $
	and $ \lambda $ replaced by $ \lambda_{n,h} $. We also define $ W^n(t) :=
	\dual{W(t), \phi_{n,h}}_\Omega $, $ 0 < t < T $, and it is easy to verify that
	\[
		W^n = Y_j^n \quad \text{ on } I_j,\quad 1 \leqslant j \leqslant J.
	\]
	Hence, \cref{thm:Y-jump} implies
	\[
		\nm{\jmp{W_j}}_{L^2(\Omega)} \leqslant
		C_\alpha j^{-1} \nm{v_h}_{L^2(\Omega)},\quad 1 \leqslant j < J,
	\]
	and then it follows that
	\begin{equation}
		\label{eq:disc_regu_3}
		\sum_{j=1}^{J-1} \nm{\jmp{W_j}}_{L^2(\Omega)}
		\leqslant C_\alpha \nm{v_h}_{L^2(\Omega)}
		\sum_{j=1}^{J-1} j^{-1}
		\leqslant C_\alpha \ln(T/\tau) \nm{v_h}_{L^2(\Omega)}.
	\end{equation}
	In addition, inserting $ V = W \chi_{(0,t_1)} $ into \cref{eq:W} yields, by
	\cref{lem:coer}, that
	\[
		\nm{W_1}_{L^2(\Omega)} \leqslant \nm{W_0}_{L^2(\Omega)},
	\]
	which implies
	\begin{equation}
		\label{eq:disc_regu_4}
		\nm{\jmp{W_0}}_{L^2(\Omega)} \leqslant 2 \nm{W_0}_{L^2(\Omega)} =
		2\nm{v_h}_{L^2(\Omega)}.
	\end{equation}
	Consequently, since \cref{eq:W} implies
	\[
		\tau Q_\tau \D_{0+}^{-\alpha}(-\Delta_h W) =
		\jmp{W_{j-1}} \quad \text{ on } I_j,
		\quad 1 \leqslant j \leqslant J,
	\]
	combining \cref{eq:disc_regu_3,eq:disc_regu_4} proves \cref{eq:disc_regu_2} and
	hence this lemma.
\end{proof}


\begin{lem}
	\label{lem:731}
	If $ f \in {}_0H^{\alpha/2}(0,T;L^2(\Omega)) $, then
	\begin{equation}
		\label{eq:731}
		\begin{aligned}
			\nm{(U-P_\tau P_hu)_j}_{L^2(\Omega)}
			& \lesssim
			\ln(T/\tau) \nm{R_hu - P_\tau P_hu}_{L^\infty(0,T;L^2(\Omega))} \\
			& \qquad{} +
			\tau^{\alpha/2} \nm{(I-Q_\tau)u}_{L^2(0,T;\dot H^1(\Omega))}
		\end{aligned}
	\end{equation}
	for each $ 1 \leqslant j \leqslant J $.
\end{lem}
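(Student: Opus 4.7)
The strategy is a duality argument anchored on the discrete regularity of \cref{thm:discrete_regu}. Set $\theta := U - P_\tau P_h u \in W_{\tau,h}$ with the convention $\theta_0 := 0$. Subtracting the DG identity \cref{eq:numer_sol} from \cref{eq:strong-form} tested against $V \in W_{\tau,h}$, and noting that $\dual{(I-P_h)u(t), V(t)}_\Omega = 0$ kills the $(I-P_h)u$ jumps, I obtain the error identity
\begin{equation*}
\mathcal B(\theta, V) := \sum_{i=0}^{J-1}\dual{\jmp{\theta_i}, V_i^+}_\Omega + \dual{\nabla \D_{0+}^{-\alpha}\theta, \nabla V}_{\Omega\times(0,T)} = \dual{\nabla \D_{0+}^{-\alpha}(u - P_\tau P_h u), \nabla V}_{\Omega\times(0,T)}.
\end{equation*}
Introducing the Ritz projection $R_h:\dot H^1(\Omega)\to S_h$ (with $\dual{\nabla(I-R_h)v, \nabla v_h}_\Omega = 0$ for $v_h\in S_h$) annihilates the $(u-R_hu)$ contribution on the right, so the identity reduces to $\mathcal B(\theta,V) = \dual{\nabla \D_{0+}^{-\alpha}\rho, \nabla V}_{\Omega\times(0,T)}$ with $\rho := R_h u - P_\tau P_h u$.

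For fixed $1\leqslant j\leqslant J$ and $\chi\in L^2(\Omega)$ with $\nm{\chi}_{L^2(\Omega)}=1$, I would introduce an auxiliary function $W\in W_{\tau,h}$ supported on $(0,t_j]$, with $W_j := P_h\chi$, governed by the equation transpose to \cref{eq:numer_sol} on $(0,t_j)$. A summation by parts in the jump term together with this backward equation arranges that $\mathcal B(\theta,W) = \dual{\theta_j,\chi}_\Omega$. Under the time reversal $\widetilde W(t) := W(t_j-t)$, this backward problem becomes an instance of \cref{eq:W} with initial value $P_h\chi$, so \cref{thm:discrete_regu} yields
\begin{align*}
\nm{W}_{{}^0H^{-\alpha/2}(0,t_j;\dot H^1(\Omega))} &\lesssim \nm{\chi}_{L^2(\Omega)}, \\
\nm{Q_\tau \D_{t_j-}^{-\alpha}(-\Delta_h W)}_{L^1(0,t_j;L^2(\Omega))} &\lesssim \ln(T/\tau)\nm{\chi}_{L^2(\Omega)}.
\end{align*}
Moving $\D_{0+}^{-\alpha}$ off $\rho$ and onto $W$ via fractional integration by parts (it becomes $\D_{t_j-}^{-\alpha}$ because $W$ vanishes on $(t_j,T)$), and converting the spatial $\dual{\nabla\cdot,\nabla\cdot}_\Omega$ to $\dual{\cdot,-\Delta_h\cdot}_\Omega$ (both arguments lie in $S_h$), then gives the master identity
\begin{equation*}
\dual{\theta_j,\chi}_\Omega = \dual{\rho, -\Delta_h \D_{t_j-}^{-\alpha}W}_{\Omega\times(0,t_j)}.
\end{equation*}

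To finish, I decompose $\rho = Q_\tau\rho + (I-Q_\tau)R_hu$, where $Q_\tau\rho = Q_\tau R_h u - P_\tau P_h u$ is piecewise constant in time. The piecewise-constant piece absorbs an additional $Q_\tau$ onto its partner at no cost, producing
\begin{equation*}
\snm{\dual{Q_\tau\rho, -\Delta_h \D_{t_j-}^{-\alpha}W}_{\Omega\times(0,t_j)}} \lesssim \ln(T/\tau)\nm{\rho}_{L^\infty(0,T;L^2(\Omega))}\nm{\chi}_{L^2(\Omega)},
\end{equation*}
which is the first term of the claimed bound. For the second piece, $R_h$ commutes with $Q_\tau$, and the Ritz property replaces $R_h u$ by $u$ in the gradient pairing; splitting $\D_{t_j-}^{-\alpha} = \D_{t_j-}^{-\alpha/2}\D_{t_j-}^{-\alpha/2}$ and pushing one half across as $\D_{0+}^{-\alpha/2}$, followed by Cauchy-Schwarz and \cref{lem:regu} (used to control $\nm{\D_{t_j-}^{-\alpha/2}W}_{L^2(0,t_j;\dot H^1(\Omega))}\lesssim\nm{\chi}_{L^2(\Omega)}$), bounds the contribution by $\nm{\D_{0+}^{-\alpha/2}(I-Q_\tau)u}_{L^2(0,T;\dot H^1(\Omega))}\nm{\chi}_{L^2(\Omega)}$. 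The $\tau^{\alpha/2}$ factor emerges from a Poincar\'e-type estimate for mean-zero-per-interval functions: since $\int_{I_k}(I-Q_\tau)u = 0$ for every $k$, duality with ${}^0H^{\alpha/2}$ together with the local estimate \cref{eq:Q_tau-sys} yields $\nm{(I-Q_\tau)u}_{{}_0H^{-\alpha/2}(0,T;\dot H^1(\Omega))} \lesssim \tau^{\alpha/2}\nm{(I-Q_\tau)u}_{L^2(0,T;\dot H^1(\Omega))}$, and then \cref{lem:regu} converts this to the desired bound on $\nm{\D_{0+}^{-\alpha/2}(I-Q_\tau)u}_{L^2(\dot H^1)}$. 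Taking the supremum over $\chi$ concludes the argument. The principal technical obstacle is the careful set-up of the backward auxiliary problem so that $\mathcal B(\theta,W) = \dual{\theta_j,\chi}_\Omega$ holds exactly, and verifying that its time reversal lands cleanly inside the hypotheses of \cref{thm:discrete_regu}; a secondary point is the Poincar\'e-type $\tau^{\alpha/2}$ estimate for mean-zero functions in the negative-order norm.
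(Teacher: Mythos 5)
Your proposal is correct and follows essentially the same route as the paper: a duality argument against the backward discrete problem, the discrete regularity estimates of \cref{thm:discrete_regu} (applied after time reversal), and a $Q_\tau$-splitting that pairs the piecewise-constant part in $L^\infty$--$L^1$ (yielding the $\ln(T/\tau)$ factor) and the remainder in $L^2$--$L^2$ (yielding the $\tau^{\alpha/2}$ factor). The only cosmetic differences are that you decompose $\rho$ rather than the $W$-factor (equivalent by self-adjointness of $Q_\tau$) and extract the $\tau^{\alpha/2}$ via a dual-norm Poincar\'e estimate after moving $\D_{t_j-}^{-\alpha/2}$ across, whereas the paper inserts $(I-Q_\tau)$ on the $W$-side and applies \cref{eq:Q_tau-sys} to $\D_{T-}^{-\alpha}W$ directly.
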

\begin{proof}
	Let $ \theta = U - P_\tau P_h u $ and set $ (P_\tau P_hu)_0 = 0 $.
	Define $ W \in W_{\tau,h} $ by that $ W_J^{+} = \theta_J $ and
	\[
		-\sum_{j=1}^J \dual{V_j, \jmp{W_j}}_\Omega +
		\dual{\nabla V, \nabla\D_{T-}^{-\alpha} W}_{\Omega \times (0,T)} = 0
		\quad \forall V \in W_{\tau,h}.
	\]
	A simple calculation then yields
	\begin{small}
	\begin{align*}
		& \nm{\theta_J}_{L^2(\Omega)}^2 =
		\dual{\theta_J, W_J^{+}}_\Omega =
		\sum_{j=0}^{J-1} \dual{\jmp{\theta_j}, W_j^{+}}_\Omega +
		\sum_{j=1}^J \dual{\theta_j, \jmp{W_j}}_\Omega \\
		={} &
		\sum_{j=0}^{J-1} \dual{\jmp{\theta_j}, W_j^{+}}_\Omega +
		\dual{\nabla\theta, \nabla\D_{T-}^{-\alpha} W}_{\Omega \times (0,T)} \\
		={} &
		\sum_{j=0}^{J-1} \dual{\jmp{\theta_j}, W_j^{+}}_\Omega +
		\dual{\nabla\D_{0+}^{-\alpha}\theta, \nabla W}_{\Omega \times (0,T)},
	\end{align*}
	\end{small}
	and proceeding as in the proof of \cref{thm:conv_f_L2} yields
	\begin{small}
	\begin{align*}
		\sum_{j=0}^{J-1} \dual{\jmp{\theta_j}, W_j^{+}}_\Omega +
		\dual{\nabla\D_{0+}^{-\alpha} \theta, \nabla W}_{\Omega \times (0,T)} =
		\dual{\nabla\D_{0+}^{-\alpha} (u-P_\tau P_hu), \nabla W}_{\Omega \times (0,T)}.
	\end{align*}
	\end{small}
	Consequently,
	\begin{align}
		\nm{\theta_J}_{L^2(\Omega)}^2 &=
		\dual{
			\nabla(u-P_\tau P_hu), \nabla \D_{T-}^{-\alpha} W
		}_{\Omega \times (0,T)} \notag \\
		&= \dual{
			\nabla(R_hu - P_\tau P_hu),\ \nabla\D_{T-}^{-\alpha} W
		}_{\Omega \times (0,T)} \notag \\
		& = \dual{
			R_hu - P_\tau P_h u,\ \D_{T-}^{-\alpha}(-\Delta_h W
		}_{\Omega \times (0,T)} \notag \\
		&= \mathbb I_1 + \mathbb I_2, \label{eq:theta_J}
	\end{align}
	where
	\begin{align*}
		\mathbb I_1 & := \dual{
			R_hu - P_\tau P_hu,\ Q_\tau\D_{T-}^{-\alpha}(-\Delta_h W)
		}_{\Omega \times (0,T)}, \\
		\mathbb I_2 &:=
		\dual{
			R_hu-P_\tau P_hu,\ (I-Q_\tau)\D_{T-}^{-\alpha}(-\Delta_h W)
		}_{\Omega \times (0,T)}.
	\end{align*}

	Next, it is evident that
	\begin{equation}
		\label{eq:shit-21}
		\mathbb I_1 \leqslant
		\nm{R_hu - P_\tau P_hu}_{L^\infty(0,T;L^2(\Omega))}
		\nm{Q_\tau\D_{T-}^{-\alpha}(-\Delta_h W)}_{L^1(0,T;L^2(\Omega))}.
	\end{equation}
	By the definitions of $ Q_\tau $ and $ R_h $,
	\begin{small}
	\begin{align*}
		\mathbb I_2 &=
		\dual{
			R_hu, (I-Q_\tau) \D_{T-}^{-\alpha}(-\Delta_h W)
		}_{\Omega \times (0,T)} \\
		&=
		\dual{
			\nabla R_hu, \nabla(I-Q_\tau)\D_{T-}^{-\alpha}W
		}_{\Omega \times (0,T)} \\
		&=
		\dual{
			\nabla u, \nabla(I-Q_\tau)\D_{T-}^{-\alpha}W
		}_{\Omega \times (0,T)} \\
		&=
		\dual{
			\nabla(I-Q_\tau)u, \nabla(I-Q_\tau)\D_{T-}^{-\alpha}W
		}_{\Omega \times (0,T)} \\
		&\leqslant
		\nm{(I-Q_\tau)u}_{L^2(0,T;\dot H^1(\Omega))}
		\nm{(I-Q_\tau)\D_{T-}^{-\alpha}W}_{L^2(0,T;\dot H^1(\Omega))}.
	\end{align*}
	\end{small}
	In addition,
	\begin{align*}
		& \nm{(I-Q_\tau)\D_{T-}^{-\alpha}W}_{L^2(0,T;\dot H^1(\Omega))} \\
		\lesssim{} & \tau^{\alpha/2}
		\nm{\D_{T-}^{-\alpha} W}_{{}^0H^{\alpha/2}(0,T;\dot H^1(\Omega))}
		\quad\text{(by \cref{eq:Q_tau-sys})} \\
		\lesssim{} & \tau^{\alpha/2}
		\nm{W}_{{}^0H^{-\alpha/2}(0,T;\dot H^1(\Omega))}
		\quad\text{(by \cref{lem:regu}).}
	\end{align*}
	Consequently,
	\begin{equation}
		\label{eq:shit-22}
		\mathbb I_2 \lesssim
		\tau^{\alpha/2} \nm{(I-Q_\tau)u}_{L^2(0,T;\dot H^1(\Omega))}
		\nm{W}_{{}^0H^{-\alpha/2}(0,T;\dot H^1(\Omega))}.
	\end{equation}

	Finally, by the symmetric version of \cref{thm:discrete_regu} we have
	\begin{align*}
		\nm{W}_{{}^0H^{-\alpha/2}(0,T;\dot H^1(\Omega))}
		\leqslant C_\alpha \nm{\theta_J}_{L^2(\Omega)}, \\
		\nm{Q_\tau\D_{T-}^{-\alpha}(-\Delta_h W)}_{L^1(0,T;L^2(\Omega))}
		\leqslant C_\alpha\ln(T/\tau) \nm{\theta_J}_{L^2(\Omega)},
	\end{align*}
	and hence combining \cref{eq:theta_J,eq:shit-21,eq:shit-22} yields that
	\cref{eq:731} holds for $ j = J $. Since the case $ 1 \leqslant j < J $ can be
	proved analogously, this completes the proof.
\end{proof}


Finally, we conclude the proof of \cref{thm:conv_f_higher} as follows. By
\cref{lem:731}, a straightforward computation yields
\begin{small}
\begin{align}
	& \nm{u-U}_{L^\infty(0,T;L^2(\Omega))} \notag \\
	\lesssim{} &
	\tau^{\alpha/2} \nm{(I-Q_\tau)u}_{L^2(0,T;\dot H^1(\Omega))} +
	\ln(T/\tau) \Big(
		\nm{(I-R_h)u}_{L^\infty(0,T;L^2(\Omega))} \notag \\
		& \quad{} +
		\nm{(I-P_h)u}_{L^\infty(0,T;L^2(\Omega))} +
		\nm{(I-P_\tau)u}_{L^\infty(0,T;L^2(\Omega))}
	\Big). \label{eq:shit-1}
\end{align}
\end{small}
By \cref{thm:regu-pde} we have
\begin{align*}
	& \nm{(I-R_h)u}_{L^\infty(0,T;L^2(\Omega))} +
	\nm{(I-P_h)u}_{L^\infty(0,T;L^2(\Omega))} \\
	\lesssim{} &
	h^{2(1-\epsilon)} \nm{u}_{C([0,T];\dot H^{2(1-\epsilon)}(\Omega))} \\
	\lesssim{} &
	\frac{h^{2(1-\epsilon)}}{\sqrt\epsilon}
	\nm{f}_{{}_0H^{\alpha+1/2}(0,T;L^2(\Omega))}
\end{align*}
for all $ 0 < \epsilon < 1/2 $, so that, by the assumption $ h < e^{-2(1+\alpha)} $
(cf.~the first paragraph of \cref{sec:main}), letting $ \epsilon := (\ln(1/h))^{-1} $
yields
\begin{equation}
	\label{eq:shit-2}
	\begin{aligned}
		& \nm{(I-R_h)u}_{L^\infty(0,T;L^2(\Omega))} +
		\nm{(I-P_h)u}_{L^\infty(0,T;L^2(\Omega))} \\
		\lesssim{} &
		\sqrt{\ln(1/h)}\, h^2
		\nm{f}_{ {}_0H^{\alpha+1/2}(0,T;L^2(\Omega)) }.
	\end{aligned}
\end{equation}
In addition, by \cref{thm:regu-pde,lem:interp}, it is standard that
\begin{align}
	& \nm{(I-Q_\tau)u}_{L^2(0,T;\dot H^1(\Omega))} +
	\nm{(I-P_\tau)u}_{L^\infty(0,T;L^2(\Omega))} \notag \\
	\lesssim{} &
	\tau \nm{f}_{{}_0H^{\alpha+1/2}(0,T;L^2(\Omega))}.
	\label{eq:shit-3}
\end{align}
Combining \cref{eq:shit-1,eq:shit-2,eq:shit-3} proves \cref{eq:conv_f_higher} and thus
concludes the proof of \cref{thm:conv_f_higher}.

\section{Numerical experiments}
\label{sec:numer}
This section performs four numerical experiments in one dimensional space to verify
\cref{thm:conv-u0,thm:f-const,thm:conv_f_L2,thm:conv_f_higher}, respectively.
Throughout this section, $ \Omega = (0,1) $, $ T = 1 $, the spatial and temporal
grids are both uniform, and $ U^{m,n} $ is the numerical solution with $ h = 2^{-m} $
and $ \tau = 2^{-n} $. Additionally, $ \nm{\cdot}_{L^\infty(0,T;L^2(\Omega))} $ is
abbreviated to $ \nm{\cdot} $ for convenience, and, for any $ \beta > 0 $,
\[
	\nm{v}_{\beta,n} := \max_{1 \leqslant j \leqslant 2^n}
	(j/2^n)^\beta \nm{v((j/2^n)-)}_{L^2(\Omega)},
\]
where $ v((j/2^n)-) $ means the left limit of $ v $ at $ j/2^n $.

\medskip\noindent {\bf Experiment 1.} This experiment verifies \cref{thm:conv-u0} in
the setting
\[
	u_0(x) = x^{-0.49}, \quad x \in \Omega,
\]
which is slightly smoother than $ L^2(\Omega) $. \cref{tab:ex3-time} validates the
theoretical prediction that the convergence behavior of $ U $ is close to $ \mathcal
O(\tau) $ when $ h $ is fixed and sufficiently small. \cref{tab:ex3-space} confirms
the theoretical prediction that the convergence behavior of $ U $ is close to $
\mathcal O(h^2) $ when $ \tau $ is fixed and sufficiently small.
\begin{table}[H]
	\caption{Convergence behavior with respect to $ \tau $.}
	\label{tab:ex3-time}
	\footnotesize
	\setlength{\tabcolsep}{3pt}
	\begin{tabular}{ccccccc}
		\toprule &
		\multicolumn{2}{c}{$\alpha=0.2$} &
		\multicolumn{2}{c}{$\alpha=0.4$} &
		\multicolumn{2}{c}{$\alpha=0.8$} \\
		\cmidrule(r){2-3}
		\cmidrule(r){4-5}
		\cmidrule(r){6-7}
		$n$ & $ \|U^{11,n}\!-\!U^{11,16}\|_{1,n} $ & Order
		& $ \|U^{11,n}\!-\!U^{11,16}\|_{1,n} $ & Order
		& $ \|U^{11,n}\!-\!U^{11,16}\|_{1,n} $ & Order \\
		$6$ & 9.07e-3 & --   & 1.44e-2 & --   & 7.05e-2 & --   \\
		$7$ & 4.58e-3 & 0.98 & 7.27e-3 & 0.98 & 3.93e-2 & 0.84 \\
		$8$ & 2.30e-3 & 0.99 & 3.66e-3 & 0.99 & 2.10e-2 & 0.91 \\
		$9$ & 1.15e-3 & 1.00 & 1.83e-3 & 1.00 & 1.09e-2 & 0.95 \\
		\bottomrule
	\end{tabular}
\end{table}

\begin{table}[H]
	\caption{Convergence behavior with respect to $ h $.}
	\label{tab:ex3-space}
	\footnotesize
	\setlength{\tabcolsep}{2pt}
	\begin{tabular}{ccccccc}
		\toprule &
		\multicolumn{2}{c}{$\alpha=0.2$} &
		\multicolumn{2}{c}{$\alpha=0.4$} &
		\multicolumn{2}{c}{$\alpha=0.8$} \\
		\cmidrule(r){2-3}
		\cmidrule(r){4-5}
		\cmidrule(r){6-7}
		$m$ & $ \|U^{m,16}\!-\!U^{11,16}\|_{1.2,16} $ & Order
		& $ \|U^{m,16}\!-\!U^{11,16}\|_{1.4,16} $ & Order
		& $ \|U^{m,16}\!-\!U^{11,16}\|_{1.8,16} $ & Order \\
		$3$ & 1.43e-3 & --   & 4.51e-3 & --   & 7.06e-2 & --   \\
		$4$ & 3.62e-4 & 1.98 & 1.13e-3 & 1.99 & 2.37e-2 & 1.57 \\
		$5$ & 9.13e-5 & 1.99 & 2.83e-4 & 2.00 & 6.76e-3 & 1.81 \\
		$6$ & 2.30e-5 & 1.99 & 7.09e-5 & 2.00 & 1.74e-3 & 1.96 \\
		\bottomrule
	\end{tabular}
\end{table}

\medskip\noindent {\bf Experiment 2.} This experiment verifies \cref{thm:f-const} in
the setting
\[
	v(x) = x^{-0.49}, \quad x \in \Omega.
\]
\cref{tab:ex4-time} confirms the theoretical prediction that the convergence behavior
of $ U $ is close to $ \mathcal O(\tau) $ when $ h $ is fixed and sufficiently small.
\cref{tab:ex4-space} confirms the theoretical prediction that the accuracy of $ U(T-)
$ (the left limit of $ U $ at $ T $) in the norm $ \nm{\cdot}_{L^2(\Omega)} $ is
close to $ \mathcal O(h^2) $ when $ \tau $ is fixed and sufficiently small.

\begin{table}[H]
	\caption{Convergence behavior with respect to $ \tau $.}
	\label{tab:ex4-time}
	\footnotesize
	\begin{tabular}{ccccccc}
		\toprule &
		\multicolumn{2}{c}{$\alpha=0.2$} &
		\multicolumn{2}{c}{$\alpha=0.4$} &
		\multicolumn{2}{c}{$\alpha=0.8$} \\
		\cmidrule(r){2-3}
		\cmidrule(r){4-5}
		\cmidrule(r){6-7}
		$n$ & $ \|U^{11,n}\!-\!U^{11,16}\| $ & Order
		& $ \|U^{11,n}\!-\!U^{11,16}\| $ & Order
		& $ \|U^{11,n}\!-\!U^{11,16}\| $ & Order \\
		$6$ & 4.53e-3 & --   & 5.45e-3 & --   & 1.01e-2 & --   \\
		$7$ & 2.31e-3 & 0.97 & 2.77e-3 & 0.97 & 5.36e-3 & 0.91 \\
		$8$ & 1.17e-3 & 0.99 & 1.40e-3 & 0.99 & 2.78e-3 & 0.95 \\
		$9$ & 5.85e-4 & 1.00 & 7.00e-4 & 1.00 & 1.41e-3 & 0.97 \\
		\bottomrule
	\end{tabular}
\end{table}

\begin{table}[H]
	\caption{Convergence behavior with respect to $ h $.}
	\label{tab:ex4-space}
	\footnotesize\setlength{\tabcolsep}{4pt}
	\begin{tabular}{ccccc}
		\toprule &
		\multicolumn{2}{c}{$\alpha=0.2$} &
		\multicolumn{2}{c}{$\alpha=0.8$} \\
		\cmidrule(r){2-3}
		\cmidrule(r){4-5}
		$m$ & $ \|(U^{m,16}-U^{11,16})(T-)\|_{L^2(\Omega)} $ & Order
		& $ \|(U^{m,16}\!-\!U^{11,16})(T-)\|_{L^2(\Omega)} $ & Order \\
		$3$ & 2.71e-3 & --    & 7.76e-3 & --   \\
		$4$ & 7.21e-4 & 1.91  & 2.04e-3 & 1.93 \\
		$5$ & 1.90e-4 & 1.92  & 5.09e-4 & 2.00 \\
		$6$ & 4.97e-5 & 1.93  & 1.27e-4 & 2.00 \\
		\bottomrule
	\end{tabular}
\end{table}

\medskip\noindent {\bf Experiment 3.} This experiment verifies \cref{thm:conv_f_L2} in
the setting
\[
	f(x,t) = x^{\alpha/(\alpha+1)-0.49} t^{-0.49},
	\quad (x,t) \in \Omega \times (0,T),
\]
which has slightly higher regularity than $ L^2(0,T;\dot
H^{\alpha/(\alpha+1)}(\Omega)) $. \cref{thm:conv_f_L2} predicts that the convergence
behavior of $ U $ is close to $ \mathcal O(h) $ when $ \tau $ is fixed and
sufficiently small, and this is in good agreement with the numerical results in
\cref{tab:ex1-space}. Moreover, \cref{thm:conv_f_L2} predicts that the convergence
behavior of $ U $ is close to $ \mathcal O(\tau^{1/2}) $ when $ h $ is fixed and
sufficiently small, which agrees well with the numerical results in
\cref{tab:ex1-time}.

\renewcommand{\arraystretch}{1.5}
\setlength{\tabcolsep}{5pt}
\begin{table}[H]
	\caption{Convergence behavior with respect to $ h $.}
	\label{tab:ex1-space}
	\footnotesize
	\begin{tabular}{ccccccc}
		\toprule &
		\multicolumn{2}{c}{$\alpha=0.2$} &
		\multicolumn{2}{c}{$\alpha=0.4$} &
		\multicolumn{2}{c}{$\alpha=0.8$} \\
		\cmidrule(r){2-3}
		\cmidrule(r){4-5}
		\cmidrule(r){6-7}
		$m$ & $ \|U^{m,16}\!-\!U^{11,16}\| $ & Order
		& $ \|U^{m,16}\!-\!U^{11,16}\| $ & Order
		& $ \|U^{m,16}\!-\!U^{11,16}\| $ & Order \\
		$3$ & 3.53e-2 & --   & 3.84 e-2 & --   & 4.95e-2 & --   \\
		$4$ & 1.70e-2 & 1.05 & 1.85e-2  & 1.06 & 2.41e-2 & 1.04 \\
		$5$ & 8.22e-3 & 1.05 & 8.89e-3  & 1.05 & 1.17e-2 & 1.04 \\
		$6$ & 3.95e-3 & 1.06 & 4.29e-3  & 1.05 & 5.69e-3 & 1.04 \\
		\bottomrule
	\end{tabular}
\end{table}

\begin{table}[H]
	\caption{Convergence behavior with respect to $ \tau $.}
	\label{tab:ex1-time}
	\footnotesize
	\begin{tabular}{ccccccc}
		\toprule &
		\multicolumn{2}{c}{$\alpha=0.2$} &
		\multicolumn{2}{c}{$\alpha=0.4$} &
		\multicolumn{2}{c}{$\alpha=0.8$} \\
		\cmidrule(r){2-3}
		\cmidrule(r){4-5}
		\cmidrule(r){6-7}
		$n$ & $ \|U^{11,n}\!-\!U^{11,16}\| $ & Order
		& $ \|U^{11,n}\!-\!U^{11,16}\| $ & Order
		& $ \|U^{11,n}\!-\!U^{11,16}\| $ & Order \\
		$6$ & 2.75e-1 & --   & 2.58e-1 & --   & 2.32e-1 & --   \\
		$7$ & 2.04e-1 & 0.43 & 1.86e-1 & 0.47 & 1.63e-1 & 0.51 \\
		$8$ & 1.48e-1 & 0.47 & 1.32e-1 & 0.50 & 1.13e-1 & 0.53 \\
		$9$ & 1.05e-1 & 0.50 & 9.21e-2 & 0.52 & 7.78e-2 & 0.54 \\
		\bottomrule
	\end{tabular}
\end{table}

\medskip\noindent {\bf Experiment 4.} This experiment verifies
\cref{thm:conv_f_higher} in the setting
\[
	f(x,t) = x^{-0.49} t^{\alpha+0.01},
	\quad (x,t) \in \Omega \times (0,T),
\]
which is slightly smoother than $ {}_0H^{\alpha+1/2}(0,T;L^2(\Omega)) $.
\cref{tab:ex2-space} confirms the theoretical prediction that the convergence
behavior of $ U $ is close to $ \mathcal O(h^2) $ when $ \tau $ is fixed and
sufficiently small, and \cref{tab:ex2-time} confirms the theoretical prediction that
the convergence behavior of $ U $ is close to $ \mathcal O(\tau) $ when $ h $ is
fixed and sufficiently small.

\begin{table}[H]
	\caption{Convergence behavior with respect to $ h $.}
	\label{tab:ex2-space}
	\footnotesize
	\begin{tabular}{ccccccc}
		\toprule &
		\multicolumn{2}{c}{$\alpha=0.2$} &
		\multicolumn{2}{c}{$\alpha=0.4$} &
		\multicolumn{2}{c}{$\alpha=0.8$} \\
		\cmidrule(r){2-3}
		\cmidrule(r){4-5}
		\cmidrule(r){6-7}
		$m$ & $ \|U^{m,16}\!-\!U^{11,16}\| $ & Order
		& $ \|U^{m,16}\!-\!U^{11,16}\| $ & Order
		& $ \|U^{m,16}\!-\!U^{11,16}\| $ & Order \\
		$3$ & 2.90e-3 & --   & 3.14e-3 & --   & 4.46e-3 & --   \\
		$4$ & 7.70e-4 & 1.91 & 8.23e-4 & 1.93 & 1.15e-3 & 1.95 \\
		$5$ & 2.03e-4 & 1.92 & 2.15e-4 & 1.94 & 2.97e-4 & 1.96 \\
		$6$ & 5.36e-5 & 1.92 & 5.59e-5 & 1.94 & 7.75e-5 & 1.94 \\
		\bottomrule
	\end{tabular}
\end{table}

\begin{table}[H]
	\caption{Convergence behavior with respect to $ \tau $.}
	\label{tab:ex2-time}
	\footnotesize
	\begin{tabular}{ccccccc}
		\toprule &
		\multicolumn{2}{c}{$\alpha=0.2$} &
		\multicolumn{2}{c}{$\alpha=0.4$} &
		\multicolumn{2}{c}{$\alpha=0.8$} \\
		\cmidrule(r){2-3}
		\cmidrule(r){4-5}
		\cmidrule(r){6-7}
		$n$ & $ \|U^{11,n}\!-\!U^{11,16}\| $ & Order
		& $ \|U^{11,n}\!-\!U^{11,16}\| $ & Order
		& $ \|U^{11,n}\!-\!U^{11,16}\| $ & Order \\
		$6$ & 8.98e-3 & --   & 6.15e-3 & --   & 5.24e-3 & --   \\
		$7$ & 4.54e-3 & 0.98 & 3.10e-3 & 0.99 & 2.64e-3 & 0.99 \\
		$8$ & 2.28e-3 & 0.99 & 1.56e-3 & 0.99 & 1.33e-3 & 1.00 \\
		$9$ & 1.14e-3 & 1.00 & 7.78e-4 & 1.00 & 6.62e-4 & 1.00 \\
		\bottomrule
	\end{tabular}
\end{table}

\section{Conclusion}
\label{sec:conclusion}
A time-stepping discontinuous Galerkin method is analyzed in this paper. Nearly
optimal error estimate with respect to the regularity of the solution is derived with
nonsmooth source term, nearly optimal error estimate is derived when the source term
satisfies some regularity assumption, and error estimate with nonsmooth initial vaue
is derived by the Laplace transform technique. In addition, the effect of the
nonvanishing $ f(0) $ on the accuracy of the numerical solution is also investigated.
Finally, numerical results are provided to verify the theoretical results.


\end{document}